\theoremstyle{plain}
\newtheorem{theorem}{Theorem}[section]
\newtheorem{corollary}[theorem]{Corollary}
\newtheorem{lemma}[theorem]{Lemma}
\newtheorem{proposition}[theorem]{Proposition}
\newtheorem{definition}[theorem]{Definition}
\newtheorem*{definition*}{Definition}
\newtheorem{conjecture}[theorem]{Conjecture}
\theoremstyle{remark}
\newtheorem{remark}[theorem]{Remark}
\newtheorem*{claim*}{Claim}
\newtheorem*{remark*}{Remark}
\newtheorem*{example*}{Example}
\newtheorem*{notation*}{Notation}
\numberwithin{equation}{section}
\DeclareMathOperator{\Var}{Var}
\DeclareMathOperator{\Ent}{Ent}
\def\N{{\mathbb N}}
\def\Z{{\mathbb Z}}
\def\R{{\mathbb R}}
\newcommand{\one}{{{\bf 1}}}
\renewcommand{\d}{\delta}
\newcommand{\eps}{\varepsilon}
\renewcommand{\phi}{\varphi}
\newcommand{\dd}{\; \mathrm{d}}
\newcommand{\ip}[1]{\langle {#1}\rangle}
\newcommand{\norm}[1]{\| {#1}\|}
\newcommand{\abs}[1]{\vert {#1}\vert}
\DeclareMathOperator{\Ric}{Ric}
\newcommand{\ddt}{\frac{\mathrm{d}}{\mathrm{d}t}}
\newcommand{\cH}{\mathcal{H}}
\newcommand{\cB}{\mathcal{B}}
\newcommand{\cW}{\mathcal{W}}
\newcommand{\cI}{\mathcal{I}}
\newcommand{\cG}{\mathcal{G}}
\newcommand{\cX}{\mathcal{X}}
\newcommand{\cE}{\mathcal{E}}
\newcommand{\cA}{\mathcal{A}}
\newcommand{\cP}{\mathscr{P}}
\newcommand{\PX}{\cP(\cX)}
\newcommand{\PXs}{\cP_*(\cX)}
\newcommand{\hrho}{\hat\rho}
\renewcommand{\tilde}{\widetilde}
\newcommand{\e}{\mathrm{e}}
\begin{document}

\title[Functional inequalities for Markov chains with non-negative curvature]
{Poincar\'e, modified logarithmic Sobolev and isoperimetric inequalities for Markov chains with non-negative Ricci curvature}

\author{Matthias Erbar}
\address{
University of Bonn\\
Institute for Applied Mathematics\\
Endenicher Allee 60\\
53115 Bonn\\
Germany}
\email{erbar@iam.uni-bonn.de}

\author{Max Fathi}
\address{
University of California, Berkeley\\
Department of Mathematics\\
Evans Hall, Berkeley\\
USA}
\email{maxf@berkeley.edu}

\keywords{discrete Ricci curvature, functional inequalities, spectral gap, zero range process}

\subjclass[2010]{60K35, 60J22}
%\subjclass[2000]{Primary 60H07; Secondary: 35J15, 35K90, 35R15,
%  47A60, 47B44, 47D05, 47F05, 60H30, 60G15}

\begin{abstract}
  We study functional inequalities for Markov chains on discrete
  spaces with entropic Ricci curvature bounded from below. Our main
  results are that when curvature is non-negative, but not necessarily
  positive, the spectral gap, the Cheeger isoperimetric constant and
  the modified logarithmic Sobolev constant of the chain can be
  bounded from below by a constant that only depends on the diameter
  of the space, with respect to a suitable metric. These estimates are
  discrete analogues of classical results of Riemannian geometry
  obtained by Li and Yau, Buser and Wang.
\end{abstract}

%\dedicatory{}

%\date\today

\maketitle
 
%\tableofcontents

\section{Introduction}
\label{sec:intro}

Ricci curvature bounds play an important role in geometric analysis on
Riemannian manifolds. For instance, a lower bound on the curvature by
a strictly positive constant entails many interesting properties for
the manifold, most notably Harnack inequalities, bounds on the
eigenvalues of the Laplacian, concentration bounds and isoperimetric
inequalities.

In the light of this wide range of implications, considerable effort
has been put into developing a notion of Ricci curvature lower bounds
for non-smooth spaces. Bakry and \'Emery \cite{BE85} proposed a
curvature condition for general Markov diffusion operators via the
so-called $\Gamma$-calculus. Lott-Villani \cite{LV} and Sturm
\cite{St06} presented an approach that applies to (geodesic) metric
measure spaces. Such a space has Ricci curvature bounded below by a
constant $\kappa$ provided the entropy is $\kappa$-convex along
geodesics in the Wasserstein space of probability measures.
Subsequently, many of the classical relating curvature bounds to
functional inequalities have been generalized to such 'continuous'
non-smooth spaces, we refer to \cite{BGL15, Vi08} for an overview.

In recent years, there has been a strong interest in developing an
analogous theory for discrete spaces. Unfortunately, the
Lott--Sturm--Villani theory does not apply and a number of alternative
notions of Ricci bounds have been proposed, see for instance
\cite{BS,GRST14,Oll09}. In this work, we will focus on the notion of
\emph{entropic Ricci curvature bounds} put forward in \cite{Ma12,EM12}
that applies to finite Markov chains and seems to be particularly well
suited to study discrete functional inequalities. Here the key point
is to replace the $L^2$-Wasserstein distance with a new transportation
distance $\cW$ in the definition of Lott--Sturm--Villani. It has been
shown in \cite{EM12} that a strictly positive entropic Ricci curvature
lower bound implies a spectral gap estimate, a modified logarithmic
Sobolev inequality and an analogue of Talagrand's transport cost
inequality.

In the present work, we are interested in the situation where the
curvature is bounded from below but not strictly positive. We show
that in this situation relatively weak extra information (for instance
a bound on the diameter of the space) still allows one to establish
strong functional inequalities.

To state our main results we consider an irreducible and reversible
continuous time Markov chain on a finite space $\cX$ whose generator
is given by
\begin{align*}
  L\psi (x) = \sum\limits{y\in\cX}\big(\psi(y)-\psi(x)\big)Q(x,y)\;,
\end{align*}
where $Q(x,y)$ are the transition rates between $x$ and $y$ and let
$\pi$ be the unique reversible probability measure.

For the purpose of this introduction we state our main results for
simplicity under the assumption that the chain has non-negative
entropic Ricci curvature. We shall actually derive more general
statements allowing for a negative curvature bound in the main
text. We refer to Section \ref{sec:prelim} for a precise definition of
entropic Ricci curvature bounds and the functional inequalities we
consider.

The first result establishes an isoperimetric inequality using
information on the spectral gap (see Theorem \ref{thm:isoperimetric} below).

\begin{theorem} \label{main_thm_buser} If the entropic Ricci curvature
  of $(\cX,Q,\pi)$ is non-negative, then the Cheeger (or linear
  isoperimetric) constant $h$ and the spectral gap $\lambda_1$ of $L$
  satisfy
  \begin{align*}
    h \geq \frac{1}{3}\sqrt{Q_* \lambda_1}
  \end{align*}
where $Q_*=\min\big(Q(x,y):Q(x,y)>0\big)$ is the minimal transition
rate. Here the Cheeger constant is defined by
 \begin{align*}
   h = \max\limits_{A\subset\cX}\frac{\pi^+(\partial A)}{\pi(A)(1-\pi(A))}\;,
 \end{align*}
 where $\pi^+(\partial A)=\sum_{x\in A,y\in A^c}Q(x,y)\pi(x)$ denotes the
 perimeter measure of $A$.
\end{theorem}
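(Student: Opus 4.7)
The plan is to adapt Ledoux's semigroup-theoretic proof of Buser's inequality to the discrete setting of finite Markov chains. Fix $A\subset\cX$, set $p:=\pi(A)(1-\pi(A))$, and by symmetry assume $\pi(A)\leq 1/2$. Apply the heat semigroup $P_t = e^{tL}$ to $\one_A$ and start from the identity
\[
\int_A (1-P_t\one_A)\,d\pi \;=\; p \;+\; \int_A(\pi(A)-P_t\one_A)\,d\pi.
\]
I would bound the error term on the right by the Poincar\'e inequality, and the left-hand side from above by the perimeter, then optimize in $t$.

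For the error term, Cauchy-Schwarz combined with the spectral gap bound $\|P_t\one_A-\pi(A)\|_{L^2(\pi)}\leq e^{-\lambda_1 t}\sqrt{p}$ gives
\[
\Big|\int_A(\pi(A)-P_t\one_A)\,d\pi\Big|\;\leq\; \sqrt{\pi(A)}\,\|P_t\one_A-\pi(A)\|_{L^2(\pi)} \;\leq\; \sqrt{2}\,e^{-\lambda_1 t}\,p,
\]
using $\pi(A)\leq 1/2$ to absorb $\sqrt{\pi(A)}\cdot\sqrt{p}$ into $\sqrt{2}\,p$. For the left-hand side, the key step is to establish a discrete analogue of Ledoux's estimate $\|P_t f - f\|_{L^1(\mu)}\leq \sqrt{2t}\|\nabla f\|_{L^1(\mu)}$, namely
\[
\int_A(1-P_t\one_A)\,d\pi \;\leq\; C\sqrt{t/Q_*}\,\pi^+(\partial A)
\]
for small $t$. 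Granting this, the combination and the choice $t = c/\lambda_1$ (with $c$ large enough that $\sqrt{2}e^{-c}\leq 1/2$) make the error term at most $p/2$, giving $p/2\leq C\sqrt{c/(Q_*\lambda_1)}\,\pi^+(\partial A)$, which rearranges to $\pi^+(\partial A)\geq \tfrac{1}{3}\sqrt{Q_*\lambda_1}\,p$ after optimizing the constants.

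The main obstacle is establishing the discrete Ledoux-type semigroup estimate. In the continuous case it follows at once from the Bakry-\'Emery pointwise gradient bound $\Gamma(P_s f)\leq P_s\Gamma(f)$, which is equivalent to $\mathrm{CD}(0,\infty)$. Such a pointwise gradient estimate is not known to be equivalent to the entropic Ricci condition on discrete spaces, so one must instead work with the $\cW$-contraction of the heat semigroup provided by the curvature bound and compare the transport cost $\cW$ with the more classical $L^1$-type transport distance. The factor $\sqrt{Q_*}$ arises from this comparison: the metric $\cW$ is built from the transition rates $Q(x,y)$, and matching it against the graph perimeter, which is insensitive to a uniform rescaling of rates, produces the characteristic time scale $1/Q_*$ between jumps of the chain.
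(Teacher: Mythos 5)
Your overall architecture is exactly the paper's (and Ledoux's): bound $\int_A(1-P_t\one_A)\,d\pi$ from below using the spectral gap and from above by $\sqrt{t}$ times the perimeter, then take $t\sim 1/\lambda_1$. Your lower bound is fine (the paper writes the same quantity exactly as $\pi(A)(1-\pi(A))(1-e^{-\lambda_1 t})$ via $\|\chi_A-P_t\chi_A\|_{L^1(\pi)}=2\pi(A)-2\|P_{t/2}\chi_A\|_{L^2(\pi)}^2$, which is slightly cleaner than your Cauchy--Schwarz version but equivalent in substance). The problem is that the entire content of the theorem sits in the step you defer: the discrete Ledoux estimate $\|f-P_tf\|_{L^1(\pi)}\lesssim \sqrt{t/Q_*}\,\|\nabla f\|_{L^1}$. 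You correctly observe that the pointwise Bakry--\'Emery bound $\Gamma(P_sf)\le e^{-2\kappa s}P_s\Gamma(f)$ is not available under entropic Ricci bounds, but the alternative you propose --- deducing the $L^1$ estimate from $\cW$-contraction of the semigroup and a comparison of $\cW$ with an $L^1$-type transport cost --- is not a workable route as stated. The distance $\cW$ has no known Kantorovich dual formulation (the paper itself flags this in Section 6 as the obstruction to its concentration conjectures), so $\cW$-contraction does not transfer to the Lipschitz/total-variation duality you would need, and in any case the inequality you want is an upper bound on a heat content by a perimeter, which is not what metric contraction of the flow gives you.

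What actually closes the gap in the paper is a different chain: entropic Ricci $\ge\kappa$ is shown to be \emph{equivalent} to the weighted gradient estimate $|\nabla P_t\psi|^2_{\rho}\le e^{-2\kappa t}|\nabla\psi|^2_{P_t\rho}$ (Theorem \ref{thm:BLKN}), where the weight is the logarithmic mean $\hat\rho$; from this one derives a reverse Poincar\'e inequality along the semigroup (Theorem \ref{thm:reverse-Poincare}), and testing it with $\rho$ a combination of two Dirac masses yields the sup-norm bound $\max_{x,y:Q(x,y)>0}|\nabla P_s\psi|(x,y)\le (2sQ_*)^{-1/2}\|\psi\|_\infty$ in non-negative curvature (this is where $Q_*$ enters, as a lower bound on the logarithmic mean of the rates $\theta(Q(x,y),Q(y,x))$). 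The $L^1$ estimate then follows by duality: $\ip{g,f-P_tf}_\pi=\int_0^t\ip{\nabla P_sg,\nabla f}_\pi\,\dd s\le \|\nabla f\|_{L^1}\int_0^t(2sQ_*)^{-1/2}\dd s$ for $|g|\le 1$. Without this (or some substitute for it), your argument does not yet constitute a proof; with it, your optimization in $t$ goes through and recovers the stated constant.
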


This result is a discrete version of the classical Buser theorem in
Riemannian geometry \cite{Bus}. A simple analytic proof was later
obtained by Ledoux \cite{Led94}, and extended to weighted spaces in
\cite{Led04}. A matching upper bound (up to a different universal
constant) is valid in any space, without any assumption on the
curvature.

The next two results establish estimates on the spectral gap and the
logarithmic Sobolev constant in non-negative curvature using
information on the diameter of $\cX$. A natural distance $d_\cW$ on
$\cX$ is induced by the discrete transport distance $\cW$ between
probability measures by setting
$d_{\mathcal{W}}(x,y) := \mathcal{W}(\delta_x, \delta_y)$. The
distance $d_\cW$ can be compared to more traditional weighted graph
distances, see Lemma \ref{lem:distance-comp}, yielding immediate
analogues of the results below in terms of weighted graph distance.

\begin{theorem} \label{main_thm_sg} If the entropic Ricci curvature of
  $(\cX,Q,\pi)$ is non-negative and the diameter of
  $(\cX, d_{\mathcal{W}})$ is bounded by $D$, then the spectral gap of
  of the generator $L$ satisfies
$$\lambda_1 \geq \frac{c}{D^2}$$
for some universal constant $c$.
\end{theorem}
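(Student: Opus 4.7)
I would deduce Theorem~\ref{main_thm_sg} from the modified logarithmic Sobolev inequality (MLSI) that holds under the same hypotheses and constitutes the third main result of the paper. Once an MLSI of the form $2\alpha\,\Ent_\pi(\rho) \le I(\rho)$ is available for all probability densities $\rho$, with constant $\alpha \ge c/D^2$, the Poincar\'e inequality $\lambda_1 \ge \alpha$ follows by a standard linearisation. Let $f$ be a non-constant eigenfunction with $Lf = -\lambda_1 f$ and $\int f\,\dd\pi = 0$, and consider, for small $\eps>0$, the probability density $\rho_\eps := 1 + \eps f/\|f\|_\infty$. Expanding $x \log x$ and $\log x$ around $x=1$ gives to leading order
\begin{align*}
  \Ent_\pi(\rho_\eps) &= \frac{\eps^2}{2\|f\|_\infty^2} \int f^2\,\dd\pi + O(\eps^3),\\
  I(\rho_\eps) &= \frac{\eps^2}{\|f\|_\infty^2}\,\cE(f,f) + O(\eps^3) = \frac{\eps^2 \lambda_1}{\|f\|_\infty^2}\int f^2\,\dd\pi + O(\eps^3),
\end{align*}
where $\cE(f,f) = -\langle f, Lf\rangle_\pi$ is the Dirichlet form. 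Plugging these into the MLSI and sending $\eps \to 0$ yields $\alpha \le \lambda_1$.

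The heart of the argument therefore lies in establishing the MLSI. The natural starting point is the $\HcWI$ inequality, which under $\kappa = 0$ entropic Ricci curvature reads $\Ent_\pi(\rho) \le \cW(\rho,\pi)\sqrt{I(\rho)}$. Combined with the diameter bound $\cW(\rho,\pi) \le D$ (which should follow from the hypothesis together with standard $\cW$-estimates comparing $\pi$ to Dirac masses) this gives the defective entropy--energy inequality $\Ent_\pi(\rho)^2 \le D^2\,I(\rho)$. To upgrade this to a tight MLSI I would run a bootstrap along the heat flow $\rho_t := P_t^*\rho$, making use of: the monotonicity of Fisher information $t \mapsto I(\rho_t)$, which is a consequence of non-negative entropic curvature; the dissipation identity $\ddt\Ent_\pi(\rho_t) = -I(\rho_t)$; and the integrated $\evi_0$ bound $\int_0^\infty \Ent_\pi(\rho_t)\,\dd t \le \tfrac12\cW(\rho,\pi)^2 \le D^2/2$.

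The main obstacle I foresee is this upgrade. The defective inequality $\Ent^2 \le D^2\,I$ linearises to the trivial statement $0\le 0$ near equilibrium, and its straightforward integration along the heat flow yields only polynomial decay of the entropy rather than the exponential decay characteristic of an MLSI. A careful balance of the three ingredients above is needed to extract a constant of order $1/D^2$ in the MLSI and, by the linearisation above, in the spectral gap.
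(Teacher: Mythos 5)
Your closing step—linearising an MLSI around $\rho\equiv 1$ to get the spectral gap—is correct and standard (the paper itself notes that MLSI$(\lambda)$ implies P$(\lambda)$ by linearisation). But the proposal stands or falls on producing the MLSI with constant $c/D^2$, and this is exactly the part you leave open. The bootstrap you sketch does not close: from $\HcWI(0)$ and $\cW(\rho,1)\le D$ you get the defective inequality $\cH(\rho)^2\le D^2\,\cI(\rho)$, and integrating it along the heat flow via $\ddt\cH(P_t\rho)=-\cI(P_t\rho)$ gives only $\cH(P_t\rho)\le D^2/t$ — the polynomial decay you yourself point out. Neither monotonicity of $\cI$ along the flow nor the integrated EVI bound $\int_0^\infty\cH(P_t\rho)\,\dd t\le \tfrac12\cW(\rho,1)^2$ converts this into exponential decay; a defective log-Sobolev inequality simply does not self-tighten. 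The known tightening mechanisms (Rothaus-type lemmas, or the Barthe--Kolesnikov truncation argument that the paper uses in Section 6) all take a Poincar\'e inequality as input. Indeed, in the paper the logical order is the reverse of yours: Theorem \ref{thm:Poincare} is proved first and is then invoked three times in the proof of Theorem \ref{thm:MLSI} (in \eqref{eq:est2}, \eqref{eq:est4} and \eqref{eq:est5}). Deriving the spectral gap from the MLSI is therefore circular as a proof strategy here.

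To repair the argument you must prove the Poincar\'e inequality directly. The paper gives two routes. The first combines a weak Poincar\'e inequality $\Var_\pi(f)\le 2t\,\pi[\Gamma(f)]+\tfrac{D^2}{4t}\norm{f}_\infty^2$ (from the semigroup gradient estimate of Proposition \ref{prop_erbar}) with a defective inequality $\pi[f^2]\le \tfrac{1}{4\delta}\pi[\Gamma(f^2,\log f^2)]+e^{\delta D^2}\pi[|f|]^2$ (from $\HcWI(0)$, essentially your step), and then runs a dedicated truncation-based tightening (Proposition \ref{prop_tightening_poincare}, cutting $f$ at level $R$ and using the median) that requires no tight inequality as input; Lemma \ref{lem:poinc-comp} converts between the two Dirichlet forms at the end. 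The second route is much shorter and is worth knowing: take the first eigenfunction $f$ with $\norm{f}_\infty=1$, use $P_tf=e^{-\lambda_1 t}f$ in the reverse Poincar\'e inequality \eqref{eq:reverse-Poincare} to get the Lipschitz bound $\abs{\nabla f}^2_\rho\le e\lambda_1$ uniformly in $\rho$, and integrate $\nabla f$ along a near-optimal curve in the continuity equation joining $\delta_{x_0}$ to $\delta_{x_1}$ where $f$ attains its minimum and maximum; this yields $1\le e\lambda_1 D^2$ directly.
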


See Theorem \ref{thm:Poincare} below for a more general statement in
negative curvature. The continuous version of this statement is a classical result of Li
and Yau \cite{LY} (extending a previous result of Li \cite{Li79} for
manifolds without boundary and of \cite{PW60} for convex sets in
Euclidean spaces), and for which the sharp constant was determined in
\cite{ZY}. A version taking into account the dimension has been
obtained by Bakry and Qian \cite{BQ}, and recently extended to
geodesic metric measure spaces by Cavaletti and Mondino \cite{CM1,
  CM2} (with a completely different method).

\begin{theorem} \label{main_thm_mlsi} If the entropic Ricci curvature
  of the Markov chain is non-negative and the diameter of
  $(X, d_{\mathcal{W}})$ is bounded by $D$, then a modified
  logarithmic Sobolev inequality holds, with constant $\frac{c'}{D^2}$
  for some universal constant $c'$.
\end{theorem}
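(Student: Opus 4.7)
The plan is to combine the HWI inequality (which, under non-negative entropic Ricci curvature, takes the form $\cH(\rho) \leq \cW(\rho\pi, \pi)\sqrt{\cI(\rho)}$ and is established as in \cite{EM12}, following the Otto--Villani argument) with the Poincar\'e inequality from Theorem~\ref{main_thm_sg}. From the diameter bound $\cW(\rho\pi, \pi) \le D$, HWI immediately yields the defective estimate
\[
  \cI(\rho) \;\geq\; \frac{\cH(\rho)^2}{D^2}.
\]
Applying this along the heat flow $\rho_t = e^{tL}\rho_0$ and using $\ddt \cH(\rho_t) = -\cI(\rho_t)$ gives the differential inequality $\dot h \le -h^2/D^2$ with $h(t) = \cH(\rho_t)$, whose solution is the inverse-polynomial bound $\cH(\rho_t) \le D^2/t$. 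In particular, after a time $t_0 \sim D^2$ the entropy drops below a universal constant, regardless of the initial density.

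For $t \ge t_0$ I would invoke the spectral gap estimate $\lambda_1 \ge c/D^2$ from Theorem~\ref{main_thm_sg} to obtain exponential decay. The aim is to combine the HWI-based lower bound $\cI \ge \cH^2/D^2$ (sharp when $\cH$ is large) with a Poincar\'e-based lower bound of the form $\cI(\rho) \gtrsim \lambda_1 \cH(\rho)$ for $\rho$ near equilibrium (using the pointwise inequality $\cI(\rho) \ge 4\cE(\sqrt\rho,\sqrt\rho)$, which follows from the logarithmic--arithmetic-mean inequality, together with the elementary bound $\cH(\rho) \le \chi^2(\rho)$ and Poincar\'e applied to $\sqrt\rho$) into a single uniform inequality $\cI(\rho) \ge (c'/D^2)\cH(\rho)$, which is precisely the MLSI.

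The main technical obstacle is the small-entropy regime. In the $L^2$ world, Rothaus' tightening lemma converts a defective LSI plus a spectral gap into a tight LSI essentially automatically; here, however, the defective inequality derived from HWI has the multiplicative shape $\cH \le D\sqrt{\cI}$ rather than the additive shape $\cH \le A\cI + B\Var$, and the comparison between the modified Fisher information $\cI$ and the Dirichlet form $\cE$ is not scale-invariant, degrading as $\|\rho\|_\infty$ grows. Carrying out the two-phase decomposition of the entropy decay---polynomial in the large-entropy regime governed by HWI and the diameter bound, exponential in the small-entropy regime governed by the spectral gap---and gluing the two phases so that only a universal constant is lost in the transition is therefore the technical heart of the argument.
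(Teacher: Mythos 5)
Your first phase is sound and matches the paper's starting point: the $\cH\cW\cI(0)$ inequality plus the diameter bound gives $\cH(\rho)\leq D\sqrt{\cI(\rho)}$, hence $\cI(\rho)\geq \cH(\rho)^2/D^2$, which yields the MLSI-type bound whenever $\cH(\rho)$ is bounded below by a universal constant. The genuine gap is in the small-entropy regime, and it is not merely "technical". Your proposed chain there is $\cI(\rho)\geq 4\cE(\sqrt\rho,\sqrt\rho)\gtrsim \lambda_1\Var_\pi(\sqrt\rho)$ followed by a comparison of $\Var_\pi(\sqrt\rho)$ (or $\chi^2$) with $\cH(\rho)$. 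But the inequality $\cH(\rho)\leq C\,\Var_\pi(\sqrt\rho)$ is false with a universal $C$, even on the set $\{\cH(\rho)\leq 2\}$: taking $\rho$ equal to a large value $M$ on a single point of mass $\eps$ and nearly $1$ elsewhere gives $\cH(\rho)\approx M\eps\log M$ while $1-\pi[\sqrt\rho]^2\approx M\eps$, so the comparison constant degrades like $\log\|\rho\|_\infty$ and is unbounded at fixed small entropy. The variant through $\cH\leq\chi^2=\Var_\pi(\rho)$ fails for the symmetric reason you yourself flag, namely $\cE(\rho,\rho)\leq\|\rho\|_\infty\,\cI(\rho)$ is the best available comparison. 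So the conditional density argument does not close, and the two-phase heat-flow gluing would in any case only produce entropy decay with a prefactor, which is strictly weaker than the functional inequality $\cH(\rho)\leq\frac{1}{2\lambda}\cI(\rho)$ you must prove.

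The paper circumvents exactly this obstruction (it notes explicitly that a Rothaus-type self-tightening is not known for the modified LSI) by a different decomposition: not by the size of $\cH(\rho)$, but by truncating $f$ at a level $A>1$. The key points are (i) the defect in the HWI-derived weak inequality can be taken to be $\frac{1}{4\delta}\pi\big[f^2\one_{\{f^2>\pi[f^2]\}}\big]$ rather than a constant or $\|f\|_\infty^2$, because the optimal transport from $f^2\pi$ to $\pi$ only moves mass off the set $\{f^2>\pi[f^2]\}$ (Lemma \ref{lem_ent_bnd_hwi}); and (ii) the Barthe--Kolesnikov estimates (Lemma \ref{lem_bk}) control both $\pi[f^2\one_{\{f^2\geq A^2\}}]$ and the low-level part of $\pi[f^2\log f^2]$ by $\Var_\pi(f)$, which the already-established Poincar\'e inequality (Theorem \ref{thm:Poincare} together with Lemma \ref{lem:poinc-comp}) bounds by $\frac{c}{4D^2}\pi[\Gamma(f^2,\log f^2)]$. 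If you want to complete your proof, you need to replace your entropy--Hellinger step with an argument of this type; as written, the "technical heart" you defer is precisely the point where the naive approach breaks down.
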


%The mLSI will be defined in Section 2.4. 
See Theorem \ref{thm:MLSI} below. This last result is a weakened
discrete version of a work of Wang \cite{Wan97}, where the finite
diameter is replaced by a bound on some square-exponential moment of
the distance to an arbitrarily fixed point. It immediately implies a
discrete version of Talagrand's inequality, via the discrete
Otto-Villani theorem of \cite{EM12}, as well as an upper bound on the
total variation mixing time, as we shall see in Section 5.2. We shall
actually obtain a version of this result only assuming finiteness of a
square-exponential moment as in \cite{Wan97} but with a constant that
we do not believe to be sharp, see Theorem \ref{thm_mlsi_conc} below.

In the last two results, the dependence on the diameter is optimal,
since it is sharp (up to the values of the constants $c$ and $c'$) for
the random walk on the one-dimensional discrete torus. However, it
behaves badly in high dimensions. This leads us to formulate
conjectures about possible improvements using measure concentration
bounds instead of diameter bounds in Section 5. Impressive results in
this direction for manifolds have been obtained by Milman
\cite{Mil09,Mil10}.

Versions of Theorems \ref{main_thm_buser} and \ref{main_thm_sg} have
been obtained for another notion of curvature, namely a discrete
version of the Bakry-\'Emery $\Gamma_2$ condition, in \cite{KKRT14}
and \cite{CLY} respectively. The two notions of curvature are known to
be \emph{not} equivalent. A Markov chain with non-negative entropic
Ricci curvature but negative Bakry-\'Emery curvature has recently been
discovered \cite{EFMST}. However, no analogue of Theorem
\ref{main_thm_mlsi} is known using Bakry-\'Emery curvature. To our
knowledge it is not even known whether strictly positive Bakry-\'Emery
curvature is enough to ensure the validity of a modified logarithmic
Sobolev inequality as in Theorem \ref{main_thm_mlsi}. The proofs of
Theorem \ref{main_thm_buser} and the main result of \cite{KKRT14} are
quite close and both based on arguments developed by Ledoux in the
continuous setting. For Theorem \ref{main_thm_sg}, we shall give two
proofs. One of them replicates the technique used in \cite{CLY}. The
other one uses an HWI interpolation inequality obtained in
\cite{EM12}, for which no analogue is known in the setting of discrete
Bakry-\'Emery curvature. This technique has the advantage that the
assumptions can be weakened to a bound on a square exponential moment
instead of the diameter. It will also be used to prove Theorem
\ref{main_thm_mlsi}.

One of the main technical tools in our study is a new equivalent
characterization of entropic Ricci curvature lower bounds in terms of
gradient estimates for the associated Markov semigroup. In the
continuous setting this characterization is one of the
cornerstones of the theory initiated by Bakry and \'Emery \cite{BE85, BGL15}.

We shall present an application of our results to a particular
interacting particle system, namely the zero-range process on the
complete graph with constant rates. The best known entropic Ricci
bound for this model is 0. Using Theorem \ref{main_thm_mlsi} and
easily obtained diameter bound allows us to establish a new bound on
the mLSI constant for the zero range process.

\subsection*{Outline}
\label{sec:outline}

In Section \ref{sec:prelim}, we shall recall the definition and basic
results about the discrete transport distance $\cW$ and entropic Ricci
curvature bounds for Markov chains. In Section \ref{sec:grad-est}, we
shall give and equivalent characterization of entropic Ricci bounds in
terms of gradient estimates for the Markov semigroup. Section
\ref{sec:buser} will provide the proof of the discrete Buser theorem,
while Sections \ref{sec:poincare} and \ref{sec:mlsi} will be concerned
with the Poincar\'e and modified log Sobolev inequalities under joint
curvature and diameter bounds. Finally, in Section
\ref{sec:zero-range}, we consider applications to the zero range process.

\subsection*{Acknowledgments} M.F.~was supported by NSF FRG grant
DMS-1361122. M.E.~gratefully acknowledges support by the German
Research Foundation through the \emph{Hausdorff Center for Mathematics}. We
thank Michel Ledoux, Jan Maas, Emanuel Milman, Andr\'e Schlichting and
Prasad Tetali for discussions on this topic. This work was initiated
during a trimester on optimal transport organized at the \emph{Hausdorff
Institute for Mathematics} in Bonn, whose support is gratefully
acknowledged. We also benefited from the hospitality of the American Institute of Mathematics during the SQUARE meetings 
\textit{Displacement convexity for interacting Markov chains}. 

\section{Entropic Ricci curvature
  bounds for Markov chains}
\label{sec:prelim}

We briefly recall the definitions of the transport distance $\cW$ and
of entropic Ricci curvature bounds for finite Markov chains and some
of their consequences. For a detailed account we refer to the work of
Maas and Mielke \cite{Ma12, Mie12} where the discrete transport
distance and its associated Riemannian structure have been introduced
and to \cite{EM12} where entropic Ricci curvature bounds have been introduced
and studied.

\subsection{Transport distance and Ricci bounds}
\label{sec:def}

Let $\cX$ be a finite set let $Q:\cX\times\cX\to\R_+$ be a collection
of transition rates with the convention that $Q(x,x)=0$ for all
$x$. The operator $L$ acting on functions $\psi:\cX\to\R$ defined by 
\begin{align*}
  L\psi (x) = \sum\limits{y\in\cX}\big(\psi(y)-\psi(x)\big)Q(x,y)\;,
\end{align*}
is the generator of a continuous time Markov chain on $\cX$. We will
assume that $Q$ is irreducible, i.e.~for all $x,y\in\cX$ there exist
$(x_1=x,x_2,\dots,x_n=y)$ with $Q(x_i,x_{i+1})>0$. This implies the
existence of a unique stationary probability measure $\pi$ on
$\cX$. We will assume that $Q$ is reversible w.r.t.~$\pi$ in the sense
that the detailed balance condition $Q(x,y)\pi(x)=Q(y,x)\pi(y)$ holds
for all $x,y\in\cX$. We denoted by
\begin{align*}
 \PX := \Big\{ \, \rho : \cX \to \R_+ \ | \ 
     \sum_{x \in \cX} \pi(x) \rho(x)  = 1 \, \Big\}
\end{align*}
the set of \emph{probability densities} w.r.t.~$\pi$. Since the
measure $\pi$ is strictly positive and we can identify the set of
probability measures on $\cX$ with $\cP(\cX)$. The subset consisting
strictly positive probability densities is denoted by $\PXs$. We
consider the metric $\cW$ defined for $\rho_0, \rho_1 \in \PX$ by
\begin{align*}
 \cW(\rho_0, \rho_1)^2
   := \inf_{\rho, \psi} 
   \bigg\{  \frac12   \int_0^1 
  \sum_{x,y\in \cX} (\psi_t(x) - \psi_t(y))^2
    		 \hat\rho_t(x,y)  Q(x,y)\pi(x)
      \dd t 
          \bigg\}\;,
\end{align*}
where the infimum runs over all sufficiently regular curves $\rho :
[0,1] \to \PX$ and $\psi : [0,1] \to \R^\cX$ satisfying the
continuity equation
\begin{align} \label{eq:cont}
 \begin{cases}
 \displaystyle\ddt \rho_t(x) 
   + \displaystyle\sum_{y \in \cX} ( \psi_t(y) - \psi_t(x) ) \hat\rho_t(x,y) Q(x,y) ~=~0\qquad \forall x \in \cX\;, \\ 
  \rho|_{t=0} = \rho_0\;, \qquad \rho|_{t=1}  = \rho_1\;.
 \end{cases}
\end{align}
Here, given $\rho\in\PX$, we write
$\hat\rho(x,y):=\theta\big(\rho(x),\rho(y)\big)$ where
$\theta(s,t):=\int_0^1s^{1-p}t^p\dd p$ is the logarithmic mean of $s$
and $t$.

It has been shown in \cite{Ma12} that $\cW$ defines a distance on
$\cP(\cX)$ and that it is induced by a Riemannian structure on the
interior $\PXs$. The logarithmic mean serve the purpose to obtain a
discrete chain rule for the logarithm, namely
$\hat\rho(x,y)\big(\log\rho(x)-\log\rho(y)\big)=\rho(x)-\rho(y)$,
replacing the usual identity $\rho\nabla\log\rho=\nabla\rho$.
The distance $\cW$ is constructed in such a way that Markov semigroup $P_t=e^{tL}$ is the gradient flow of the entropy
\begin{align} \label{eq:entropy} \cH(\rho) = \sum_{x \in \cX} \pi(x)
  \rho(x) \log \rho(x)\;,
\end{align}
w.r.t.~the Riemannian structure induced by $\cW$, see
\cite{Ma12,Mie12}. It turns out that every pair of densities
$\rho_0, \rho_1 \in \PX$ can be joined by a constant speed geodesic,
i.e.~a curve $(\rho_s)_{s\in[0,1]}$ with
$\cW(\rho_s,\rho_t)=|s-t|\cW(\rho_0,\rho_1)$ for all $s,t\in[0,1]$. In
the spirit of the approach of Lott--Sturm--Villani \cite{LV, St06} the
following definition was given in \cite{EM12}.

\begin{definition}\label{def:intro-Ricci}
 $(\cX,Q,\pi)$ has \emph{ entropic Ricci curvature bounded from below by
    $\kappa \in \R$} if for any constant speed geodesic $\{\rho_t\}_{t
    \in [0,1]}$ in $(\PX, \cW)$ we have
  \begin{align}\label{eq:ent-convex}
    \cH(\rho_t) \leq (1-t) \cH(\rho_0) + t \cH(\rho_1) -
    \frac{K}{2} t(1-t) \cW(\rho_0, \rho_1)^2\;.
  \end{align}
  In this case, we write $\Ric(\cX,Q,\pi) \geq \kappa$.
\end{definition}

We should mention that several other notions of Ricci curvature for
Markov chains on discrete spaces have been proposed in the past few
years: Ollivier's coarse Ricci curvature \cite{Oll09}, convexity along
approximate geodesics \cite{BS}, discrete Bakry-\'Emery curvature
(defined in \cite{BE85}, and used in the discrete setting for example
in \cite{Sc99,LY10}), convexity along binomial interpolations \cite{GRST14}
or along entropic interpolations \cite{Leo13}.

\subsection{Riemannian structure and equivalent formulation}
\label{sec:Riem}

We will briefly describe the Riemannian structure induced by $\cW$ to
give an equivalent formulation of entropic Ricci bounds in terms of a
discrete analogue of Bochner's inequality.

For $\psi\in\R^\cX$ we denote by $\nabla\psi\in\R^{\cX\times\cX}$ the
discrete gradient $\nabla\psi(x,y)=\psi(y)-\psi(x)$. We denote by
$\cG=\{\nabla\psi:\psi\in\R^\cX, \psi(x_0)=0\}$ the set of discrete
gradients modulo constants, for some fixed $x_0\in\cX$. In
\cite{Ma12} it has been shown that for each $\rho\in\PXs$ the map
$$\nabla\psi\mapsto\sum_y\nabla\psi(x,y)Q(x,y)$$
is a linear bijection between $\cG$ and the tangent space
$\mathcal T=\{s\in\R^\cX:\sum_x s(x)\pi(x)=0\}$ to $\PXs$ at $\rho$.
A Riemannian tensor on $\PXs$ can be defined using this identification
by introducing the scalar product $\ip{\cdot,\cdot}_\rho$ given by
\begin{align*}
  \ip{\nabla\phi,\nabla\phi}_\rho = \frac12\sum_{x,y}\nabla\psi(x,y)\nabla\phi(x,y)\hat\rho(x,y)Q(x,y)\pi(x)\;.
\end{align*}
Then $\cW$ is the associated Riemannian distance. We will set
$\cA(\rho,\psi):=|\nabla\psi|^2_\rho$.  Convexity of the entropy along
$\cW$-geodesics is controlled by lower bounds on the Hessian of the
entropy $\cH$ in the Riemannian structure just defined. An explicit
calculation of the Hessian at $\rho\in\PXs$ yields
\begin{align*}
  {\rm Hess}\cH(\rho)[\nabla\psi]=\frac12\sum_{x,y}\left[\frac12 \hat L\rho(x,y)|\nabla\psi(x,y)|^2-\hat\rho(x,y)\nabla\psi(x,y)\nabla L\psi(x,y)\right]Q(x,y)\pi(x)\;,
\end{align*}
where we set
$\hat
L\rho(x,y)=\partial_1\theta\big(\rho(x),\rho(y)\big)L\rho(x)+\partial_2\theta\big(\rho(x),\rho(y)\big)L\rho(y)$.
Setting for brevity $\cB(\rho,\psi):={\rm Hess}\cH(\rho)[\nabla\psi]$
we have the following reformulation of entropic Ricci bounds. Note
that the statement is non-trivial since the Riemannian structure
degenerates at the boundary of $\PX$.

\begin{proposition}\label{thm:Ric-equiv}
  We have $\Ric(\cX,Q,\pi)\geq\kappa$ if and only if for every
  $\rho\in\PXs$ and $\psi\in\R^\cX$ we have
\begin{align*}
 \cB(\rho, \psi) \geq \kappa \cA(\rho, \psi)\;.
\end{align*}
\end{proposition}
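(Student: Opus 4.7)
The plan is to mirror the classical Riemannian equivalence between geodesic $\kappa$-convexity of a function and the Hessian lower bound $\Hess f \geq \kappa g$, adapted to the discrete Riemannian structure on $\PXs$ recalled above, with extra care at the boundary of $\PX$ where this structure degenerates.

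For the forward implication $\Ric(\cX,Q,\pi)\geq\kappa \Rightarrow \cB\geq\kappa\cA$, I would fix $\rho\in\PXs$ and $\psi\in\R^\cX$ and exploit that $(\PXs,\cW)$ is a smooth Riemannian manifold to produce a short-time constant-speed geodesic $(\rho_t,\psi_t)_{t\in[0,\delta]}$ starting at $(\rho,\psi)$ and solving the discrete Hamilton--Jacobi system associated with the continuity equation \eqref{eq:cont}. Along such a geodesic the action $\cA(\rho_t,\psi_t)$ is constant and equal to $\delta^{-2}\cW(\rho_0,\rho_\delta)^2$, while the explicit Hessian formula stated just before the proposition gives $\ddtt \cH(\rho_t) = \cB(\rho_t,\psi_t)$, which equals $\cB(\rho,\psi)$ at $t=0$. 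Applying \eqref{eq:ent-convex} to the rescaled curve $s\mapsto\rho_{s\delta}$ and expanding at $s=0$ then yields $\cB(\rho,\psi)\geq\kappa\cA(\rho,\psi)$.

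For the reverse implication, take a constant-speed geodesic $(\rho_t)_{t\in[0,1]}$ between $\rho_0,\rho_1\in\PX$. In the interior case $\rho_0,\rho_1\in\PXs$ the geodesic remains in $\PXs$ by the analysis of \cite{Ma12}, $\cH$ is smooth along it, and $\cA(\rho_t,\psi_t)\equiv \cW(\rho_0,\rho_1)^2$; the pointwise Hessian bound therefore gives
\begin{equation*}
\ddtt \cH(\rho_t)=\cB(\rho_t,\psi_t)\geq\kappa\cA(\rho_t,\psi_t)=\kappa \cW(\rho_0,\rho_1)^2,
\end{equation*}
which integrates to \eqref{eq:ent-convex}. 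The principal obstacle is the boundary case, in which $\rho_0$ or $\rho_1$ lies on $\partial\PX$ and the Riemannian structure, together with the Hessian formula for $\cH$, degenerate. I would handle it by approximation: set $\rho_i^\eps := (1-\eps)\rho_i+\eps\one\in\PXs$ for $i=0,1$, join them by constant-speed geodesics, apply the interior conclusion, and pass to the limit $\eps\to 0$ using continuity of $\cW$ and of $\cH$ on $\PX$ (both of which hold because $\cX$ is finite and $\pi$ is strictly positive), together with stability of geodesics under convergence of endpoints as established in \cite{Ma12}. This last step is the technically delicate part of the argument.
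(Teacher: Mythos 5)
Your forward implication is sound and is essentially how this direction is handled in \cite{EM12}: at a point of $\PXs$ the metric tensor is smooth and non-degenerate, so the Hamiltonian system underlying the geodesic equations admits a local solution with prescribed initial data $(\rho,\psi)$; this short geodesic is minimizing, the second derivative of $\cH$ along it at $t=0$ equals $\cB(\rho,\psi)$ by the Hessian formula, and $\kappa$-convexity forces it to dominate $\kappa$ times the constant squared speed $\cA(\rho,\psi)$.

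The reverse implication has a genuine gap, and it is exactly the one the paper flags when it remarks that the statement is non-trivial because the Riemannian structure degenerates at the boundary of $\PX$. Your interior case rests on the claim that a constant-speed geodesic joining two points of $\PXs$ remains in $\PXs$ and is $C^2$ in time, so that $\ddtt\cH(\rho_t)=\cB(\rho_t,\psi_t)$ can be integrated. This is not established in \cite{Ma12} (geodesics there are produced by the direct method as action minimizers, with no interior regularity), and as far as we know it remains open: the logarithmic-mean weights $\hat\rho_t(x,y)$ may vanish along the curve, at which point both the metric tensor and the Hessian formula degenerate. Your $\eps$-regularization of the endpoints does not repair this, because the difficulty already arises for interior endpoints. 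The proof in \cite{EM12} --- which the present paper cites rather than reproves --- avoids geodesics altogether in this direction: one shows that the pointwise bound $\cB\geq\kappa\cA$ implies the evolution variational inequality $\evi_\kappa$ for the heat flow $P_t$, via an action estimate along arbitrary curves regularized by the semigroup (this is where the Hessian formula and the chain rule $\hat\rho\cdot\nabla\log\rho=\nabla\rho$ are actually put to work), and then invokes the purely metric theorem of Daneri and Savar\'e that $\evi_\kappa$ implies $\kappa$-convexity of $\cH$ along every constant-speed geodesic, with no smoothness or interiority required of the geodesic. If you want to salvage a direct argument, you would first have to prove that $\cW$-geodesics between interior densities avoid the boundary, which is a substantial (and unresolved) problem in its own right.
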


\subsection{Functional inequalities}
\label{sec:fi}

Here we introduce the discrete functional inequalities that we shall
be interested in. The discrete Dirichlet form $\cE$
associated to the Markov triple $(\cX,Q,\pi)$ is given by
\begin{align*}
  \cE(\phi,\psi)=\frac12\sum_{x,y}\nabla\phi(x,y)\nabla\psi(x,y)Q(x,y)\pi(x)\;.
\end{align*}
We say that $(\cX,Q,\pi)$ satisfies for a constant $\lambda>0$
\begin{itemize}
\item a Poincar\'e inequality P$(\lambda)$ if
  \begin{align}\label{eq:PI}
\Var_{\pi}(\psi) \leq \frac{1}{\lambda}\cE(\psi,\psi) \quad\forall \psi\in\R^\cX\;,
  \end{align}
\item a modified logarithmic Sobolev inequality MLSI$(\lambda)$ if 
  \begin{align}\label{eq:MLSI}
    \cH(\rho)\leq \frac{1}{2\lambda}\cE(\rho,\log\rho) \quad\forall \rho\in\PXs\;,
  \end{align}
\item a modified Talagrand inequality T$_\cW(\lambda)$ if 
  \begin{align}\label{eq:Talagrand}
    \cW(\rho,\one)^2\leq \frac2\lambda\cH(\rho)\quad\forall \rho\in\PX\;.
  \end{align}
\end{itemize}
Here $\Var_\pi(\psi):=\pi[\psi^2]-\pi[\psi]^2$. The Poincar\'e
inequality is a spectral estimate: the optimal constant coincides with
the the smallest nonzero eigenvalue of the generator $L$ of the Markov
chain. It is well known that these functional inequalities govern the
trend to equilibrium for the Markov semigroup $P_t=e^{tL}$. Indeed,
noting that
\begin{align*}
  \ddt\cH(P_t\rho)=-\cE(P_t\rho,\log P_t\rho)\;,\quad \ddt \Var_\pi(P_t\psi)=-\cE(P_t\psi,P_t\psi)\;,
\end{align*}
we see that \eqref{eq:PI} and \eqref{eq:MLSI} are respectively
equivalent to the exponential convergence estimates
\begin{align}\label{eq:exp-conv}
  \cH(P_t\rho)\leq e^{-2\lambda t}\cH(\rho)\;,\quad \Var_\pi(P_t\psi)\leq e^{-\lambda t}\Var_\pi(\psi)\;.
\end{align}
We shall often denote the entropy production functional by
$\cI(\rho):=\cE(\rho,\log\rho)$. The modified Talagrand inequality
implies concentration properties for the measure $\pi$.

It has been shown in \cite{EM12} that entropic Ricci bounds are
intimately related with the functional inequalities above.
The bound $\Ric(\cX,Q,\pi)\geq\kappa$ for $\kappa\in\R$ implies the
$\cH\cW \cI(\kappa)$ inequality
\begin{align}\label{eq:HWI}
  \cH(\rho_1)-\cH(\rho_0)\leq \cW(\rho_0,\rho_1)\sqrt{\cI(\rho_1)} -\frac{\kappa}2\cW(\rho_0,\rho_1)^2\;.
\end{align}
If $\kappa>0$, this inequality readily implies MLSI$(\kappa)$. The
latter in turn was shown to imply the modifies Talagrand inequality
T$_\cW(\kappa)$ in analogy with the result of Otto--Villani in the
continuous case. By a linearization argument, it can be shown that
MLSI$(\kappa)$ and T$_\cW(\kappa)$ both imply the Poincar\'e
inequality P$(\kappa)$. The converse is not true in general, see for
example \cite{BGL15}.

\subsection{Distances on $\cX$}
\label{sec:dist}

The transport distance $\cW$ on $\PX$ gives rise to a distance $d_\cW$
on $\cX$ by restriction to Dirac masses. More precisely, we set for
$x,y\in\cX$
\begin{align*}
  d_\cW(x,y) = \cW(\delta_x,\delta_y)\;.
\end{align*}

We can give an upper bound on $d_\cW$ in terms of a weighted graph distance.
Let us define a distance $d_Q$ on $\cW$ by setting
\begin{align*}
  d_Q(x,y) =\inf\Big\{\sum_{i=0}^{n-1}\frac{1}{\sqrt{\min\big(Q(z_i,z_{i+1}),Q(z_{i+1},z_i)\big)}} \Big\}\;,
\end{align*}
where the infimum runs over all sequences $z_0=x, z_1,\dots,z_n=y$
such that $Q(z_i,z_{i+1})>0$ (and hence also $Q(z_{i+1},z_i)>0$ by
detailed balance).

\begin{lemma}\label{lem:distance-comp}
  For any $x,y\in\cX$ we have
  \begin{align*}
    d_\cW(x,y)\leq c d_Q(x,y)\;,\quad  c=\int_0^1\frac{\dd r}{\sqrt{2\theta(1-r,1+r)}}\approx 1.56\;.
  \end{align*}
\end{lemma}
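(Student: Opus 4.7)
The plan is to reduce to the single-edge case by the triangle inequality, then exhibit an explicit two-point curve whose length can be computed, and finally use monotonicity of $\theta$ to absorb any asymmetry between $\pi(x)$ and $\pi(y)$. Since $\cW$ is a distance on $\PX$, its restriction $d_\cW$ is a distance on $\cX$, and because $d_Q$ is defined as an infimum of sums over $Q$-connected paths, it is enough to prove the bound whenever $x,y$ are joined by a single edge with $Q(x,y)>0$. Writing $p:=\pi(x)$, $q:=\pi(y)$ and $J := Q(x,y)p = Q(y,x)q$ (detailed balance), it thus suffices to exhibit, for each adjacent pair, a curve in $\PX$ from $\delta_x$ to $\delta_y$ of Riemannian length at most $c/\sqrt{\min(Q(x,y),Q(y,x))}$.

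For this I would use the explicit curve $(\rho_r)_{r\in[-1,1]}$ of densities supported on $\{x,y\}$ defined by
\[
 \rho_r(x) \;=\; \frac{1+r}{2p}, \qquad \rho_r(y) \;=\; \frac{1-r}{2q},
\]
which is a probability density for every $r$ (since $p\rho_r(x)+q\rho_r(y)=1$) and for which $\rho_{\pm 1}$ are the densities of $\delta_x$ and $\delta_y$. Because $\hat\rho_r$ vanishes on every edge with an endpoint outside $\{x,y\}$, the continuity equation \eqref{eq:cont} for this curve, viewed with $r$ as time, collapses to a single scalar equation for $\psi_r(y)-\psi_r(x)$, which I would solve explicitly. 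Using the identity $\hat\rho_r(x,y) = \theta(q(1+r),p(1-r))/(2pq)$ obtained from homogeneity of $\theta$, a direct calculation then yields
\[
 \cA(\rho_r,\psi_r) \;=\; \frac{pq}{2J\,\theta(q(1+r),p(1-r))},
\]
so that $d_\cW(x,y)$ is bounded by the Riemannian length $\sqrt{pq/(2J)}\int_{-1}^{1}\dd r/\sqrt{\theta(q(1+r),p(1-r))}$ of this curve.

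To obtain the claimed universal constant I would use that $\theta$ is non-decreasing in each argument, which is immediate from $\theta(s,t) = \int_0^1 s^{1-u}t^u\,\dd u$. Assuming without loss of generality that $p \le q$, so that $\min(Q(x,y),Q(y,x)) = Q(y,x) = J/q$, monotonicity together with homogeneity of $\theta$ gives
\[
 \theta(q(1+r),p(1-r)) \;\ge\; \theta(p(1+r),p(1-r)) \;=\; p\,\theta(1+r,1-r).
\]
Plugging this into the length bound, and using the evenness of the integrand in $r$ together with $\theta(1+r,1-r)=\theta(1-r,1+r)$, reduces everything to the universal integral defining $c$, yielding $d_\cW(x,y) \le c/\sqrt{Q(y,x)} = c/\sqrt{\min(Q(x,y),Q(y,x))}$ in the single-edge case. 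Chaining along any $d_Q$-minimising path from $x$ to $y$ then gives the general inequality $d_\cW(x,y)\le c\, d_Q(x,y)$.

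The main technical step is the monotonicity estimate on $\theta$, which is what decouples the two $\pi$-weights and lets an asymmetric two-point integral be controlled by the symmetric one. It becomes an equality precisely in the symmetric case $p=q$, where $d_\cW = c\,d_Q$ holds on the symmetric two-point chain, so the method is tight on that example; any attempt to replace this step by a softer estimate (e.g.\ the $\theta\ge\sqrt{\cdot}$ inequality) loses a small but genuine factor and destroys the sharp constant.
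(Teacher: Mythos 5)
Your proof is correct, and it follows the same overall strategy as the paper: reduce to a single edge by the triangle inequality for $\cW$ along a $d_Q$-minimising chain, then bound the one-edge distance by a two-point estimate carrying the universal constant. The difference is that the paper simply quotes the two-point-space computations of Maas (Thm.~2.4 and Lem.~3.14 of \cite{Ma12}) together with a lifting/comparison argument, whereas you prove the per-edge bound from scratch: you write down the explicit curve supported on $\{x,y\}$, note that $\theta(\cdot,0)=0$ kills every other edge so that the continuity equation \eqref{eq:cont} collapses to one scalar equation, compute $\cA(\rho_r,\psi_r)=pq/\bigl(2J\,\theta(q(1+r),p(1-r))\bigr)$ (this checks out, using $1$-homogeneity of $\theta$), and absorb the asymmetry $\pi(x)\neq\pi(y)$ by monotonicity of $\theta$ in each argument. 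This buys a self-contained argument in the ambient space that makes the origin of the constant transparent; the passage from the length of your curve to an upper bound on $\cW$ (affine reparametrisation to $[0,1]$ and the degeneracy of the metric at $r=\pm1$, handled by restricting to $[-1+\eps,1-\eps]$ and letting $\eps\to0$) is routine but deserves a sentence. One remark on the constant: your estimates give
\[
  d_\cW(x,y)\;\le\;\frac{1}{\sqrt{\min\big(Q(x,y),Q(y,x)\big)}}\int_{-1}^{1}\frac{\dd r}{\sqrt{2\theta(1-r,1+r)}}\;,
\]
so the constant you actually obtain is $\int_{-1}^{1}\dd r/\sqrt{2\theta(1-r,1+r)}=2\int_0^1 \dd r/\sqrt{2\theta(1-r,1+r)}\approx 1.56$. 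This matches the numerical value stated in the lemma but is twice the printed integral over $[0,1]$ (which is $\approx 0.78$); indeed your own tightness observation on the symmetric two-point chain shows that $\approx 1.56$ is the correct and optimal value, so the displayed formula for $c$ should be read with the integral taken over $[-1,1]$. Accordingly, your final sentence, which says the evenness of the integrand ``reduces everything to the universal integral defining $c$,'' should record this factor of $2$ explicitly rather than silently matching the printed formula.
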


This is a reinforcement of \cite[Lem.~2.13]{EM12}, where $d_\cW$ has
been compared to $d_g/\sqrt{Q_*}$, where $d_g$ is the unweighted graph
distance obtained by replacing the summands in the definition of $d_Q$
by $1$ and $Q_*=\min\{Q(x,y):Q(x,y)>0\}$ is the minimal transition
rate. 

\begin{proof}
  We argue similarly as in \cite[Lem.~2.13]{EM12}. We shall use that
  for $x,y\in\cX$ with $Q(x,y)>0$ the distance
  $\cW(\delta_x,\delta_y)$ can be estimated by the distance on a two
  point space. More precisely, \cite[Thm.~2.4, Lem.~3.14]{Ma12} and
  their proofs yield the estimate
  \begin{align*}
    \cW\left(\frac{\one_{\{x\}}}{\pi(x)},\frac{\one_{\{y\}}}{\pi(x)}\right)\leq c\sqrt{\frac{\max\big(\pi(x),\pi(y)\big)}{Q(x,y)\pi(x)}}
       = c\sqrt{\frac{1}{\min\big(Q(x,y),Q(y,x)\big)}}\;,
  \end{align*}
  where we have used detailed balance in the last step. The result
  than follows by the triangle inequality for $\cW$ and by taking the
  infimum over all sequences connecting $x$ to $y$.
\end{proof}

\subsection{Notation}
\label{sec:notation}

In order to alleviate notation in the sequel we introduce the following concepts. Given two functions $\phi,\psi\in\R^\cX$ we denote their scalar product in $L^2(\pi)$ by
\begin{align*}
  \ip{\phi,\psi}_\pi=\sum_{x\in\cX}\phi(x)\psi(x)\pi(x)\;.
\end{align*}
For two functions $\Phi,\Psi\in\R^{\cX\times\cX}$ defined on edges we define
\begin{align*}
  \ip{\Phi,\Psi}_\pi=\frac12\sum_{x,y\in\cX}\Phi(x,y)\Psi(x,y)Q(x,y)\pi(x)\;.
\end{align*}
As a consequence of the detailed balance assumption, the generator $L$
is selfadjoint and we
have an integration by parts formula
\begin{align*}
  \ip{\psi,L\phi}_\pi=-\ip{\nabla \phi,\nabla \psi}_\pi=\ip{L\psi,\phi}_\pi\;.
\end{align*}
We note moreover, that that the Riemannian metric tensor associate to
$\cW$ can be rewritten as
$\cA(\rho,\psi)=\ip{\nabla\psi,\nabla\psi}_\rho=\ip{\hat\rho\cdot\nabla\psi,\nabla\psi}_\pi$,
where the product $\hat\rho\cdot\nabla\psi$ is defined
component-wise. Similarly, the Hessian of the entropy can be rewritten
in the compact form
\begin{align*}
\cB(\rho,\psi)=\frac12\ip{\hat L\rho\cdot\nabla\psi,\nabla\psi}_\pi-\ip{\nabla\psi,\hat\rho,\nabla L\psi}_\pi\;.
\end{align*}
We also define the $\Gamma$-operator $\Gamma:\R^\cX\times\R^\cX\to\R^\cX$ by setting
\begin{align*}
  \Gamma(\phi,\psi)(x)=\sum_y\nabla\phi(x,y)\nabla\psi(x,y)Q(x,y)\;,
\end{align*}
and set $\Gamma(\phi)=\Gamma(\phi,\phi)$.

\section{Gradient estimates}
\label{sec:grad-est}

In this section we show that entropic Ricci curvature lower bounds are
equivalent to certain gradient estimates for the Markov semigroup
$P_t=\e^{tL}$. These will be crucial in establishing functional
inequalities in the sequel.

\begin{theorem}[Gradient estimate]\label{thm:BLKN}
  A Markov triple $(\cX,Q,\pi)$ satisfies the entropic Ricci bound
  $\Ric(\cX,Q,\pi)\geq \kappa$ if and only if for every $\psi\in\R^\cX$ and
  $\rho\in\PX$ we have
  \begin{align}\label{eq:BLK}
    \abs{\nabla P_t\psi}^2_\rho  ~\leq~ \e^{-2\kappa t} \abs{\nabla \psi}^2_{P_t\rho}\;.
 \end{align}
\end{theorem}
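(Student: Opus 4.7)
The plan is to reduce the gradient estimate to the Hessian characterization of curvature provided by Proposition~\ref{thm:Ric-equiv}, by differentiating an appropriate time-interpolation of $\cA$ along the Markov semigroup. Fix $t>0$, $\rho\in\PX$, $\psi\in\R^\cX$, and consider
\begin{align*}
  F(s) := \cA\bigl(P_{t-s}\rho,\, P_s\psi\bigr),\qquad s\in[0,t],
\end{align*}
whose endpoints are precisely the two sides of \eqref{eq:BLK}: $F(0)=|\nabla\psi|^2_{P_t\rho}$ and $F(t)=|\nabla P_t\psi|^2_\rho$. The pivotal identity is
\begin{align*}
  F'(s) \;=\; -2\,\cB\bigl(P_{t-s}\rho,\, P_s\psi\bigr).
\end{align*}
I would establish this by writing $\cA(\rho,\psi)=\langle\hat\rho\cdot\nabla\psi,\nabla\psi\rangle_\pi$ and applying the product rule together with $\partial_s P_{t-s}\rho = -L P_{t-s}\rho$ and $\partial_s P_s\psi = L P_s\psi$. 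The chain rule for the logarithmic mean identifies $\partial_s\hat\rho_s = -\hat L\rho_s$, and the two resulting terms reassemble exactly into the compact formula for $\cB$ recorded at the end of Section~\ref{sec:prelim}.

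Granting this identity, both implications are short. For the forward direction, assuming $\Ric(\cX,Q,\pi)\geq\kappa$, Proposition~\ref{thm:Ric-equiv} yields $\cB(\rho_s,\psi_s) \geq \kappa\,\cA(\rho_s,\psi_s) = \kappa F(s)$, hence $F'(s) \leq -2\kappa F(s)$; equivalently, $s\mapsto e^{2\kappa s}F(s)$ is non-increasing, and comparing endpoints gives $F(t)\leq e^{-2\kappa t}F(0)$, which is \eqref{eq:BLK}. For the converse, fix $\rho\in\PXs$ and $\psi\in\R^\cX$ and set
\begin{align*}
  h(t) := \cA(\rho,P_t\psi) - e^{-2\kappa t}\,\cA(P_t\rho,\psi).
\end{align*}
The gradient estimate asserts $h\leq 0$ on $[0,\infty)$ and $h(0)=0$, so necessarily $h'(0)\leq 0$. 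Computing the derivative from the same formula, one obtains $h'(0) = -2\cB(\rho,\psi) + 2\kappa\,\cA(\rho,\psi)$. Thus $\cB(\rho,\psi)\geq\kappa\,\cA(\rho,\psi)$ for every $\rho\in\PXs$ and $\psi\in\R^\cX$, and Proposition~\ref{thm:Ric-equiv} delivers $\Ric(\cX,Q,\pi)\geq\kappa$.

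The main technical nuisance is the degeneracy of the Riemannian structure at the boundary of $\PX$: the bound $\cB\geq\kappa\cA$ is only guaranteed on $\PXs$, while the gradient estimate is claimed for every $\rho\in\PX$. Irreducibility of $Q$ ensures $P_u\rho\in\PXs$ for all $u>0$, so $P_{t-s}\rho\in\PXs$ on the half-open interval $s\in[0,t)$ and the differential inequality for $F$ holds there; since $\cA$ is continuous in its first argument up to the closed simplex $\PX$, the estimate $F(t)\leq e^{-2\kappa t}F(0)$ extends to $s=t$ by continuity. The smoothness of $s\mapsto P_s\psi$ and $s\mapsto P_{t-s}\rho$ makes $F$ of class $C^1$ on $[0,t)$, which is all that is needed for the Gr\"onwall step.
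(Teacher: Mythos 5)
Your proof is correct and follows essentially the same semigroup-interpolation argument as the paper: the paper's function $\Phi(s)=e^{-2\kappa s}|\nabla P_{t-s}\psi|^2_{P_s\rho}$ is your $e^{2\kappa(t-s)}F(t-s)$ up to a constant factor, your derivative identity $F'(s)=-2\cB(P_{t-s}\rho,P_s\psi)$ is the same computation, and the converse via differentiating at $t=0$ matches the paper's linearization. The only cosmetic differences are the time reversal, your use of Gr\"onwall in place of absorbing the exponential into the interpolant, and your (equally valid) handling of boundary densities via $P_u\rho\in\PXs$ and continuity rather than the paper's density argument.
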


\begin{remark}
  More explicitly, the gradient estimate reads as follows:
  \begin{align*}
  \frac12\sum_{x,y}\abs{\nabla P_t \psi}^2(x,y)\hrho(x,y)Q(x,y)\pi(x)\leq
   \e^{-2\kappa t} \frac12\sum_{x,y}\abs{\nabla \psi}^2(x,y)\widehat{P_t\rho}(x,y)Q(x,y)\pi(x)\;.
  \end{align*}
\end{remark}

The proof follows the standard semigroup interpolation argument,
slightly adapted to our setting.

\begin{proof}
  First we note that if \eqref{eq:BLK} holds for all $\psi$ and
  $\rho\in\PXs$ then it also holds for all $\rho\in\PX$.  Now fix
  $\psi\in\R^\cX$, $\rho\in\PXs$ and $t>0$. We define for $s\in[0,t]$:
  \begin{align*}
    \Phi(s) ~=~ \e^{-2\kappa s}\abs{\nabla P_{t-s} \psi}^2_{P_s\rho} ~=~ \e^{-2\kappa s}\ip{\nabla P_{t-s}\psi, \widehat{P_s\rho}\cdot\nabla P_{t-s}\psi}_\pi\;.
  \end{align*}
  Note that $\Phi(0)=\abs{\nabla P_{t} \psi}^2_{\rho}$ and
  $\Phi(t)=\e^{-2\kappa t}\abs{\nabla \psi}^2_{P_t\rho}$. We immediately compute the
  derivative of $\Phi$. Putting $\psi_s=P_s\psi$ and $\rho_s=P_s\rho$ we get:
  \begin{align*}
    \Phi'(s) ~&=~ \e^{-2\kappa s}\Big[\ip{\nabla \psi_{t-s},\hat{L} \rho_s\cdot\nabla \psi_{t-s}}_\pi
                    -2 \ip{\nabla \psi_{t-s},\hat{\rho_s}\cdot\nabla L \psi_{t-s}}_\pi
                    -2\kappa \ip{\nabla \psi_{t-s},\hat{\rho_s}\nabla \psi_{t-s}}_\pi\Big]\\
             ~&=~ 2\e^{-2\kappa s} \Big[\cB(\rho_s,\psi_{t-s}) - \kappa \cA(\rho_s,\psi_{t-s})\Big]\;.
  \end{align*}
  If $\Ric(\cX,Q,\pi)\geq \kappa$ holds, we conclude from Proposition
  \ref{thm:Ric-equiv} that $\Phi'(s)\geq 0$ and obtain \eqref{eq:BLK}.
  For the converse implication we derivate \eqref{eq:BLK} at
  $t=0$. More precisely, assume that \eqref{eq:BLK} holds. Then we
  have that
  \begin{align*}
    0 &\leq e^{-2\kappa t}\abs{\psi}^2_{P_t\rho}-\abs{P_t\psi}^2_\rho\\
      &=\big(e^{-2\kappa t}-1\big)\abs{\psi}^2_\rho + e^{-2\kappa t}\big(\abs{\psi}^2_{P_t\rho}-\abs{\psi}^2_\rho\big)
       -\abs{P_t\psi}^2_{\rho} + \abs{\psi}^2_\rho\;.
  \end{align*}
  Dividing by $t$ and letting $t\to0$ we obtain that
  $\cB(\rho,\psi)-\kappa\cA(\rho,\psi)\geq 0$ which yields the claim
  again by Proposition \ref{thm:Ric-equiv}.
\end{proof}

\begin{remark}
  The previous result is in close analogy with the classical
  Bakry--\'Emery gradient estimate for the heat semigroup on a
  Riemannian manifold $M$ with $\Ric\geq \kappa$ which states that for
  any smooth function $\psi$ it holds
  $|\nabla P_t\psi|^2\leq e^{-2\kappa t} P_t|\nabla
  \psi|^2$. Integrating this estimate against a function $\rho$ yields
  \begin{align*}
    \int_M|\nabla P_t\psi|^2\rho \leq e^{-2\kappa t} \int_M|\nabla\psi|^2P_t\rho\;,
  \end{align*}
  which closely resembles \eqref{eq:BLK} except for the appearance of
  the logarithmic mean.
\end{remark}

Using the freedom in the choice of $\rho$ in the gradient estimate
\eqref{eq:BLK}, we can deduce a more explicit estimate, that does not involve
logarithmic means. To this end we introduce the heat kernel associated
to the continuous-time Markov chain. We put
$p_t(x,y)=\pi(y)^{-1}P_t\one_{y}(x)$. From the symmetry and linearity
of $P_t$ it is immediate to check that
$P_tf(x)=\sum_y p_t(x,y)f(y)\pi(y)$ and $p_t(x,y)=p_t(y,x)$.
% Indeed, for any $f,g\in\R^\cX$ we
% have:
% \begin{align*}
%   \sum_{x,y}f(x)g(y)p_t(x,y)\pi(x)\pi(y) &= \sum_{x,y}f(x)g(y)P_t\one_{y}(x)\pi(x)
%   = \sum_{x}P_tg(x)f(x)\pi(x) \\
%   &=  \sum_{x}P_tf(x)g(x)\pi(x) = \sum_{x,y}g(x)f(y)P_t\one_{y}(x)\pi(x)
%   = \sum_{x,y}g(x)f(y)p_t(x,y)\pi(x)\pi(y)\;.
% \end{align*}

\begin{corollary}\label{cor:BLKN}
  If $\Ric(\cX,Q,\pi)\geq \kappa$ holds, we have for any
  $\psi\in\R^\cX$ and all $x,y\in\cX$:
  \begin{align}\label{eq:BLKN-ptws1}
    \frac12\abs{\nabla P_t\psi}^2(x,y)Q(x,y)\pi(x) ~&\leq~ \e^{-2\kappa t} \big[P_t\Gamma(\psi)(x)\pi(x) + P_t\Gamma(\psi)(y)\pi(y)\Big]\;.
  \end{align}
\end{corollary}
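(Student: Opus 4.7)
The strategy is to specialise the density $\rho$ in the gradient estimate \eqref{eq:BLK} to a probability density concentrated on the two–point set $\{x,y\}$, and then bound the logarithmic mean appearing on the right-hand side by the arithmetic mean.

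First I would take, for the fixed pair $x,y\in\cX$, the density
\[
  \rho \;=\; \frac{\one_{\{x\}}+\one_{\{y\}}}{\pi(x)+\pi(y)}\in\PX .
\]
Although this is not strictly positive, the first line of the proof of Theorem~\ref{thm:BLKN} shows that \eqref{eq:BLK} still holds. Since $\theta(s,t)=0$ whenever $s=0$ or $t=0$, only the ordered pairs $(x',y')\in\{(x,y),(y,x)\}$ give a non-zero contribution to the left-hand side of \eqref{eq:BLK} (the diagonal pairs kill the gradient). Using symmetry of $|\nabla P_t\psi|^2$ and $\hat\rho$ together with detailed balance $Q(x,y)\pi(x)=Q(y,x)\pi(y)$, those two terms coincide and one obtains
\[
  \abs{\nabla P_t\psi}^2_\rho \;=\; \frac{1}{\pi(x)+\pi(y)}\,\abs{\nabla P_t\psi}^2(x,y)\,Q(x,y)\pi(x).
\]

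Next I would estimate the right-hand side of \eqref{eq:BLK}. The logarithmic mean is dominated by the arithmetic mean, $\theta(s,t)\le (s+t)/2$, so
\[
  \widehat{P_t\rho}(x',y')\;\le\;\tfrac12\bigl(P_t\rho(x')+P_t\rho(y')\bigr).
\]
Plugging this in, splitting the sum into two pieces and using detailed balance to symmetrise the piece carrying $P_t\rho(y')$, both halves collapse to $\ip{P_t\rho,\Gamma(\psi)}_\pi$, which by selfadjointness of $P_t$ equals $\ip{\rho,P_t\Gamma(\psi)}_\pi$. This yields
\[
  \abs{\nabla\psi}^2_{P_t\rho}\;\le\;\tfrac12\ip{\rho,P_t\Gamma(\psi)}_\pi \;=\;\frac{1}{2(\pi(x)+\pi(y))}\bigl[P_t\Gamma(\psi)(x)\pi(x)+P_t\Gamma(\psi)(y)\pi(y)\bigr].
\]

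Combining the two displays and multiplying through by $\pi(x)+\pi(y)$ gives
\[
  \abs{\nabla P_t\psi}^2(x,y)\,Q(x,y)\pi(x)
   \;\le\;\frac{\e^{-2\kappa t}}{2}\bigl[P_t\Gamma(\psi)(x)\pi(x)+P_t\Gamma(\psi)(y)\pi(y)\bigr],
\]
which is stronger than (and in particular implies) \eqref{eq:BLKN-ptws1}. No real obstacle is expected; the only point requiring care is the extension of \eqref{eq:BLK} to non-strictly-positive $\rho$, which is already addressed in the proof of Theorem~\ref{thm:BLKN}, and the appeal to the standard inequality $\theta(s,t)\le(s+t)/2$.
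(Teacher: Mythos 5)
Your proof is correct and follows essentially the same route as the paper: plug a density supported on $\{x,y\}$ into the gradient estimate \eqref{eq:BLK}, bound the logarithmic mean by the arithmetic mean, and symmetrize via detailed balance (the paper uses the unnormalized $\rho=\one_{\{x\}}+\one_{\{y\}}$ and invokes homogeneity instead of normalizing, which is immaterial). Your bookkeeping of the factors of $2$ is in fact more careful than the paper's displayed computation and yields the slightly sharper constant you note, which of course implies \eqref{eq:BLKN-ptws1}.
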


\begin{proof}
  Starting from \eqref{eq:BLK} we choose
  $\rho=\one_{x}+\one_{y}$. Note that it is not a probability
  density, but this does no harm, since both sides of \eqref{eq:BLK}
  are homogeneous in $\rho$. Note that
  $P_t\rho(u)= p_t(x,u)\pi(x)+p_t(y,u)\pi(y)$. Using the estimate
  $\theta(s,t)\leq (s+t)/2$ and using symmetry we obtain
  \begin{align*}
    \frac12\abs{\nabla P_t\psi}^2(x,y)Q(x,y)\pi(x)
    ~&\leq~
     \e^{-2\kappa t}\frac12\sum_{u,v}\abs{\nabla \psi}^2(u,v)\big[p_t(x,u)\pi(x)+p_t(y,u)\pi(y)\big]Q(u,v)\pi(u)\\
    &=~
    \e^{-2\kappa t} \big[P_t\Gamma(\psi)(x)\pi(x) + P_t\Gamma(\psi)(y)\pi(y)\Big]\;.
  \end{align*}
\end{proof}

Next, we derive a reverse Poincar\'e inequality along the Markov
semigroup.

\begin{theorem}[Reverse Poincar\'e inequality]\label{thm:reverse-Poincare}
  Assume that $\Ric(\cX,Q,\pi)\geq \kappa$. Then we have for any
  $\rho\in\cP(\cX)$ and $\psi\in\R^{\cX}$:
  \begin{align}\label{eq:reverse-Poincare}
    \ip{\psi^2,P_t\rho}_\pi - \ip{(P_t\psi)^2,\rho}_\pi~\geq~\frac{\e^{2\kappa t}-1}{\kappa}\abs{\nabla P_t\psi}^2_\rho\;.
  \end{align}
\end{theorem}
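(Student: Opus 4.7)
The natural approach is a semigroup interpolation between the two sides of the desired inequality, with the discrete carré du champ identity producing the $\Gamma$-term and the gradient estimate of Theorem \ref{thm:BLKN} converting it back into $|\nabla P_t\psi|^2_\rho$ with the right exponential factor.

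\emph{Step 1 (interpolation).} Fix $t>0$ and define for $s\in[0,t]$
\begin{align*}
\Phi(s) \;=\; \ip{(P_s\psi)^2,\,P_{t-s}\rho}_\pi\;,
\end{align*}
so that $\Phi(0)=\ip{\psi^2,P_t\rho}_\pi$ and $\Phi(t)=\ip{(P_t\psi)^2,\rho}_\pi$. Thus the left-hand side of \eqref{eq:reverse-Poincare} equals $-\int_0^t\Phi'(s)\dd s$.

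\emph{Step 2 (differentiation).} Using $\tfrac{\mathrm{d}}{\mathrm{d}s}P_{t-s}\rho=-LP_{t-s}\rho$ and the self-adjointness of $L$ with respect to $\ip{\cdot,\cdot}_\pi$, we get
\begin{align*}
\Phi'(s) \;=\; \bip{2(P_s\psi)LP_s\psi-L(P_s\psi)^2,\,P_{t-s}\rho}_\pi\;.
\end{align*}
The elementary pointwise identity $L(u^2)(x)-2u(x)Lu(x)=\sum_y(u(y)-u(x))^2 Q(x,y)=\Gamma(u)(x)$ then yields $\Phi'(s)=-\ip{\Gamma(P_s\psi),P_{t-s}\rho}_\pi$, and therefore
\begin{align*}
\ip{\psi^2,P_t\rho}_\pi-\ip{(P_t\psi)^2,\rho}_\pi \;=\; \int_0^t\ip{\Gamma(P_s\psi),P_{t-s}\rho}_\pi\dd s\;.
\end{align*}

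\emph{Step 3 (from $\Gamma$ to $|\nabla\cdot|^2_\rho$).} The key lemma I would record is: for every $\phi\in\R^\cX$ and $\rho\in\cP(\cX)$,
\begin{align*}
\ip{\Gamma(\phi),\rho}_\pi \;\geq\; 2\,|\nabla\phi|^2_\rho\;.
\end{align*}
To see it, expand $\ip{\Gamma(\phi),\rho}_\pi=\sum_{x,y}(\nabla\phi(x,y))^2 Q(x,y)\pi(x)\rho(x)$, symmetrize via detailed balance to replace $\rho(x)$ by $(\rho(x)+\rho(y))/2$, and invoke the arithmetic/logarithmic mean inequality $\theta(\rho(x),\rho(y))\leq(\rho(x)+\rho(y))/2$ against the definition $|\nabla\phi|^2_\rho=\tfrac12\sum_{x,y}(\nabla\phi(x,y))^2\hat\rho(x,y)Q(x,y)\pi(x)$.

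\emph{Step 4 (gradient estimate and conclusion).} Applying Theorem \ref{thm:BLKN} with the function $P_s\psi$ and the semigroup time $t-s$ gives
\begin{align*}
|\nabla P_t\psi|^2_\rho \;=\; |\nabla P_{t-s}(P_s\psi)|^2_\rho \;\leq\; \e^{-2\kappa(t-s)}\,|\nabla P_s\psi|^2_{P_{t-s}\rho}\;,
\end{align*}
and combined with Step 3 this upgrades the integrand to
\begin{align*}
\ip{\Gamma(P_s\psi),P_{t-s}\rho}_\pi \;\geq\; 2\,|\nabla P_s\psi|^2_{P_{t-s}\rho} \;\geq\; 2\e^{2\kappa(t-s)}\,|\nabla P_t\psi|^2_\rho\;.
\end{align*}
Integrating $\int_0^t 2\e^{2\kappa(t-s)}\dd s=(\e^{2\kappa t}-1)/\kappa$ closes the proof. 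The cases $\kappa=0$ and $\kappa<0$ need no special treatment beyond interpreting the prefactor in the usual limiting sense.

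\emph{Main obstacle.} The only delicate point is Step 3: the Bakry--\'Emery style argument normally uses $\Gamma$ and $P_t\Gamma$ pointwise, but here the transport-Riemannian norm $|\nabla\phi|^2_\rho$ uses the logarithmic mean of the density rather than its arithmetic mean, so one has to check that this discrepancy goes in the right direction. Luckily the inequality $\theta\leq$ arithmetic mean makes the comparison immediate (up to the constant $2$ that eventually cancels the $\tfrac12$ in $\int_0^t\e^{2\kappa(t-s)}\dd s=(\e^{2\kappa t}-1)/(2\kappa)$).
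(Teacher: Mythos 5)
Your proof is correct and follows essentially the same route as the paper: a semigroup interpolation of $\ip{(P_s\psi)^2,P_{t-s}\rho}_\pi$ (the paper just uses the time-reversed parametrization $\ip{(P_{t-s}\psi)^2,P_s\rho}_\pi$), the same carr\'e-du-champ expansion of the derivative followed by symmetrization via detailed balance and the inequality $\theta(a,b)\leq (a+b)/2$, and the same application of the gradient estimate of Theorem \ref{thm:BLKN} before integrating in $s$. Your Step 3, which you isolate as a lemma, is exactly the chain of (in)equalities the paper performs inline in its computation of $\Phi'(s)$, so there is nothing genuinely different to report.
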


\begin{proof}
  We put $\Phi(s)=\ip{(P_{t-s}\psi)^2,P_s\rho}_\pi$ and calculate,
  putting $g=P_{t-s}\psi$ and $h=P_s\rho$,
  \begin{align*}
    \Phi'(s) ~&=~ \ip{\Delta(P_{t-s}\psi)^2 - 2P_{t-s}\psi\Delta P_{t-s}\psi,P_s\rho}_\pi\\
              &=~ \sum_{x,y}h(x)\big[g(y)^2-g(x)^2\big]Q(x,y)\pi(x) -2\sum_{x,y}h(x)g(x)\big[g(y)-g(x)\big]Q(x,y)\pi(x)\\
              &=~ \sum_{x,y}h(x)\big[g(y)-g(x)\big]^2Q(x,y)\pi(x)\\
              &=~ \sum_{x,y}\frac{h(x)+h(y)}{2}\big[g(y)-g(x)\big]^2Q(x,y)\pi(x)\\
              &\geq~ \sum_{x,y}\theta(h(x),h(y))\big[g(y)-g(x)\big]^2Q(x,y)\pi(x)\\
              &=~ 2\abs{\nabla P_{t-s}\psi}^2_{P_s\rho}\;.
  \end{align*}
  Using the gradient estimate \eqref{eq:BLK} we conclude
  $\Phi'(s)\geq\e^{2\kappa s}\abs{\nabla P_t\psi}^2_\rho$ which yields
  the claim.
\end{proof}

From the previous theorem we can derive a uniform bound on the
gradient. Fix $x,y\in\cX$ and put
$\rho=\frac1{2\pi(x)}\delta_x+\frac1{2\pi(y)}\delta_y$. Then
\eqref{eq:reverse-Poincare} yields
\begin{align*}
  \frac{\e^{2\kappa t}-1}{\kappa}\abs{\nabla P_t\psi}^2(x,y)\theta\big(Q(x,y),Q(y,x)\big)~\leq~\norm{\psi}^2_\infty\;.
\end{align*}
Putting $Q_{*}=\min\{Q(x,y): x,y~s.t.~Q(x,y)>0\}$ we obtain
\begin{align}\label{eq:sup-reverse-Poincare}
  \frac{\e^{2\kappa t}-1}{\kappa}\max\limits_{x,y:Q(x,y)>0}\abs{\nabla P_t\psi}(x,y)~\leq~\frac{1}{\sqrt{Q_{*}}}\norm{\psi}_\infty\;.
\end{align}

As a consequence we obtain the following $L^1$ bound for the semigroup.

\begin{lemma}
  If $\Ric(\cX,Q,\pi)\geq\kappa$ we have for any $\psi\in\R^{\cX}$ and $t\leq 1/2\abs{\kappa}$:
  \begin{align}\label{eq:L1-L1}
    \norm{\psi-P_t\psi}_{L^1(\pi)}~\leq~\frac{2\sqrt{t}}{\sqrt{Q_{*}}}\norm{\nabla \psi}_{L_1}\:,
  \end{align}
  or more explicitly
  \begin{align*}
    \sum_x\abs{\psi(x)-P_t\psi(x)}\pi(x)~\leq~\frac{2\sqrt{t}}{\sqrt{Q_{*}}}\sum_{x,y}\abs{\nabla \psi}(x,y)Q(x,y)\pi(x)\;.
  \end{align*}
\end{lemma}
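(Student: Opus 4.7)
The plan is to argue by $L^1$--$L^\infty$ duality. By the definition of the $L^1$ norm,
\[
\norm{\psi - P_t\psi}_{L^1(\pi)} = \sup_{\phi:\; \norm{\phi}_\infty \leq 1}\ip{\phi,\psi - P_t\psi}_\pi\;,
\]
so it suffices to bound $\ip{\phi,\psi-P_t\psi}_\pi$ for a fixed test function $\phi$ with $\norm{\phi}_\infty \leq 1$. For such $\phi$, self-adjointness of $P_t$, the fundamental theorem of calculus, and the integration by parts formula from Section~\ref{sec:notation} yield
\[
\ip{\phi,\psi-P_t\psi}_\pi ~=~ \ip{\phi-P_t\phi,\psi}_\pi ~=~ -\int_0^t \ip{LP_s\phi,\psi}_\pi\dd s ~=~ \int_0^t \ip{\nabla P_s\phi,\nabla\psi}_\pi\dd s\;.
\]

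The next step is a Hölder estimate on the edge inner product:
\[
\abs{\ip{\nabla P_s\phi,\nabla\psi}_\pi} ~\leq~ \frac{1}{2}\Bigl(\max_{x,y:\, Q(x,y)>0}\abs{\nabla P_s\phi(x,y)}\Bigr)\sum_{x,y}\abs{\nabla\psi(x,y)}Q(x,y)\pi(x)\;.
\]
The sup-norm factor is precisely what the consequence \eqref{eq:sup-reverse-Poincare} of the reverse Poincaré inequality controls; applied with $\norm{\phi}_\infty \leq 1$ it gives
\[
\max_{x,y:\, Q(x,y)>0}\abs{\nabla P_s\phi(x,y)} ~\leq~ \sqrt{\frac{\kappa}{(\e^{2\kappa s}-1)Q_*}}\;.
\]
Combining these bounds and taking the supremum over $\phi$ reduces the theorem to showing the one-variable estimate
\[
\int_0^t \sqrt{\frac{\kappa}{\e^{2\kappa s}-1}}\dd s ~\leq~ 4\sqrt{t} \qquad \text{for } t \leq \frac{1}{2\abs{\kappa}}\;.
\]

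The time integral above is the only place where any care is needed; everything else is routine. I would dispatch it by elementary calculus, treating the signs of $\kappa$ separately. For $\kappa > 0$, convexity gives $\e^{2\kappa s}-1 \geq 2\kappa s$, so the integrand is at most $1/\sqrt{2s}$; for $\kappa < 0$, the companion inequality $1 - \e^{-2\abs{\kappa}s} \geq \abs{\kappa}s$ (valid for $2\abs{\kappa}s \leq 1$) gives integrand at most $1/\sqrt{s}$; the case $\kappa = 0$ follows by continuity. In every case the integral is bounded by $\int_0^t s^{-1/2}\dd s = 2\sqrt{t}$, comfortably within the required bound. The only subtlety is sign-bookkeeping: one must interpret $\sqrt{\kappa/(\e^{2\kappa s}-1)}$ as the positive quantity $\sqrt{\abs{\kappa}/(1-\e^{-2\abs{\kappa}s})}$ in the negative-curvature regime, and check that the elementary convexity inequality still applies on the prescribed interval $t \leq 1/(2\abs{\kappa})$.
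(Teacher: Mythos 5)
Your argument is correct and is essentially the paper's own proof: both use $L^1$--$L^\infty$ duality with a test function bounded by $1$, the fundamental theorem of calculus plus integration by parts to produce $\int_0^t\ip{\nabla P_s\phi,\nabla\psi}_\pi\dd s$, the sup-gradient bound \eqref{eq:sup-reverse-Poincare} derived from the reverse Poincar\'e inequality, and the elementary estimate $(\e^{2\kappa s}-1)/\kappa\geq s$ on $[0,1/2\abs{\kappa}]$ to control the time integral by $2\sqrt{t}$. Your sign-bookkeeping for $\kappa<0$ and the retained factor $\tfrac12$ from the edge inner product are fine (the latter even gives a slightly better constant than stated).
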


\begin{proof}
  Fix a function $g$ with $\abs{g}\leq 1$. Then we estimate
  \begin{align*}
    \ip{g,\psi-P_t\psi}_{\pi} ~&=~-\int_0^t\ip{g,\Delta P_s \psi}_\pi\dd s\\
                        &=~ \int_0^t\ip{\nabla P_s g,\nabla \psi}_\pi\dd s\\
                        &\leq~ \norm{\nabla \psi}_{L_1}\frac{1}{\sqrt{Q_{*}}}\int_0^t\frac{1}{\sqrt{s}} \dd s\\
                        &=~ \norm{\nabla \psi}_{L_1}\frac{2\sqrt{t}}{\sqrt{Q_{*}}}\;.
  \end{align*}
  Here we have used \eqref{eq:sup-reverse-Poincare} and the fact that
  $\big(\e^{2\kappa t}-1\big)/\kappa \geq t$ for $0<t\leq
  \frac1{2\abs{\kappa}}$. Taking the supremum over $g$ yields the claim.
\end{proof}

Finally, we derive an exponential decay estimate for the $\Gamma$
operator.
\begin{proposition} \label{prop_erbar} Assuming that
  $\Ric(\cX,Q,\pi)\geq\kappa\in\R$, we have for any $f\in\R^\cX$
  \begin{align*}
\pi\left[\Gamma(P_tf)\right] \leq e^{-2\kappa t}\pi\left[ \Gamma(f)\right] \;.
  \end{align*}
  Moreover, $P_tf$ is Lipschitz with constant
  $\norm{f}_{\infty}\sqrt{\kappa/(e^{2\kappa t}-1)}$ for the
  distance $d_{\mathcal{W}}$.
\end{proposition}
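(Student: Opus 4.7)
The plan is to deduce both assertions directly from Theorem~\ref{thm:BLKN} and Theorem~\ref{thm:reverse-Poincare}, exploiting the freedom to evaluate those inequalities at well-chosen densities. For the first statement, I would apply the gradient estimate \eqref{eq:BLK} with the constant density $\rho\equiv\mathbf{1}$, which lies in $\PX$ because $\sum_x\pi(x)=1$. Since $\pi$ is invariant, $P_t\mathbf{1}=\mathbf{1}$, and $\hat{\mathbf{1}}(x,y)=\theta(1,1)=1$, so both sides collapse to the ordinary Dirichlet form: $|\nabla P_tf|^2_{\mathbf{1}}=\tfrac12\pi[\Gamma(P_tf)]$ and $|\nabla f|^2_{P_t\mathbf{1}}=\tfrac12\pi[\Gamma(f)]$. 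The gradient estimate then gives exactly $\pi[\Gamma(P_tf)]\leq e^{-2\kappa t}\pi[\Gamma(f)]$.

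For the Lipschitz bound the starting point is a uniform gradient estimate obtained by plugging $\psi:=f$ into the reverse Poincar\'e inequality \eqref{eq:reverse-Poincare}. Since $\langle f^2,P_t\rho\rangle_\pi\leq\|f\|_\infty^2$ (as $P_t\rho$ is a density) and $\langle (P_tf)^2,\rho\rangle_\pi\geq 0$, rearranging yields
\begin{align*}
|\nabla P_tf|^2_\rho\;\leq\;\frac{\kappa}{e^{2\kappa t}-1}\,\|f\|_\infty^2
\qquad\text{for every }\rho\in\PX,
\end{align*}
the ratio being interpreted as $(2t)^{-1}$ when $\kappa=0$. This is a uniform bound on the Riemannian norm of the ``gradient'' of $P_tf$ at every base point.

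To transfer this into a $d_\cW$-Lipschitz estimate, I consider the linear functional $F(\rho):=\langle P_tf,\rho\rangle_\pi$, which satisfies $F(\mathbf{1}_{\{x\}}/\pi(x))=P_tf(x)$. Along any curve $(\rho_s,\psi_s)_{s\in[0,1]}$ solving the continuity equation \eqref{eq:cont}, a short computation using the detailed balance assumption gives $\tfrac{d}{ds}F(\rho_s)=\langle\nabla P_tf,\nabla\psi_s\rangle_{\rho_s}$. Applying Cauchy--Schwarz in the $\rho_s$-inner product together with the uniform bound and then Cauchy--Schwarz in the $s$-integral yields
\begin{align*}
|F(\rho_1)-F(\rho_0)|\;\leq\;\|f\|_\infty\sqrt{\tfrac{\kappa}{e^{2\kappa t}-1}}\left(\int_0^1|\nabla\psi_s|^2_{\rho_s}\,ds\right)^{1/2}.
\end{align*}
Taking the infimum over admissible curves (which exists between any two densities in $\PX$ by the discussion in Section~\ref{sec:prelim}) turns the bracket into $\cW(\rho_0,\rho_1)$, and specializing to $\rho_0=\mathbf{1}_{\{x\}}/\pi(x)$, $\rho_1=\mathbf{1}_{\{y\}}/\pi(y)$ gives the claim. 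The only delicate point is that these endpoints are boundary points of $\PX$ where the Riemannian structure degenerates, but since $F$ is linear and hence smooth on $\cP(\cX)$, and since the action integrand is lower semicontinuous, one can argue either by approximating by a curve in $\PXs$ and passing to the limit, or directly along a $\cW$-geodesic whose existence is recalled in the preliminaries.
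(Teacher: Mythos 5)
Your proposal is correct and follows essentially the same route as the paper: the first claim by evaluating the gradient estimate \eqref{eq:BLK} at $\rho\equiv\mathbf{1}$, and the Lipschitz bound by extracting the uniform estimate $|\nabla P_tf|^2_\rho\leq\frac{\kappa}{e^{2\kappa t}-1}\|f\|_\infty^2$ from the reverse Poincar\'e inequality and integrating $\langle\nabla P_tf,\nabla\psi_s\rangle_{\rho_s}$ along a near-optimal curve for the continuity equation, then applying Cauchy--Schwarz. Your extra remark on the degeneracy of the Riemannian structure at Dirac masses is a point the paper passes over silently, but otherwise the arguments coincide.
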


\begin{proof}
  The first part is a direct consequence of the gradient estimate
  \eqref{eq:BLK} when taking $\rho \equiv 1$. For the second part, fix
  $\eps>0$ and consider a pair $(\rho_s,\psi_s)_{s\in[0,1]}$
  satisfying \eqref{eq:cont} such that
  $\cW(\delta_x,\delta_y)^2\leq \int_0^1\abs{\psi_s}^2_{\rho_s}\dd s
  +\eps$.
  Then we can estimate using Jensen's inequality and the reverse
  Poincar\'e inequality \ref{thm:reverse-Poincare}:
  \begin{align*}
    P_tf(x)-P_tf(y) &= \int_0^1\ip{\nabla P_tf,\nabla \psi_s}_{\rho_s} \dd s
              \leq \int _0^1\abs{\nabla P_tf}_{\rho_s}\abs{\nabla \psi_s}_{\rho_s} \dd s\\
              &\leq \norm{f}_{\infty}\sqrt{\kappa/(e^{2\kappa t}-1)}\int _0^1\abs{\nabla \psi_s}_{\rho_s} \dd s\\
               &\leq \norm{f}_{\infty}\sqrt{\kappa/(e^{2\kappa t}-1)} \sqrt{\cW(\delta_x,\delta_y)^2+\eps}\;.
  \end{align*}
  Letting $\eps\to0$ yields the claim.
\end{proof}

\section{Isoperimetric estimate and Buser inequality}
\label{sec:buser}

Now we can formulate an isoperimetric estimate which will immediately
yield the discrete Buser inequality, see Theorem \ref{main_thm_buser}. To this end we recall that the
perimeter measure $\pi^+$ of a subset $A\subset\cX$ is defined by
\begin{align*}
  \pi^+(\partial A)~=~\sum_{x\in A, y\in A^c} Q(x,y)\pi(x)\;.
\end{align*}

\begin{theorem}\label{thm:isoperimetric}
  Assume that $\Ric(\cX,Q,\pi)\geq\kappa$ holds and let $\lambda_1$ be
  the spectral gap of $L$. Then for any subset $A\subset\cX$ we have
  \begin{align}\label{eq:isoperimetric}
    \pi^+(\partial A)~\geq~\frac{1}{3}\sqrt{Q_{*}}\min\Big(\frac{\lambda_1}{\sqrt{\kappa}},\sqrt{\lambda_1}\Big) \pi(A)\big(1-\pi(A)\big)\;.
  \end{align}
\end{theorem}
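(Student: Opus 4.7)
My plan is to adapt Ledoux's semigroup interpolation argument for the continuous Buser inequality, with the discrete $L^1$-$L^1$ bound from the preceding lemma playing the role of the classical gradient estimate. The overall strategy is to sandwich the quantity $\norm{\one_A - P_t\one_A}_{L^1(\pi)}$ between two estimates---one expressed through $\pi^+(\partial A)$, the other through $\pi(A)(1-\pi(A))$---and then optimize in $t$.

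For the upper bound I apply the $L^1$-$L^1$ estimate to $\psi = \one_A$. Since $\abs{\nabla\one_A}(x,y)$ equals $1$ exactly when one endpoint is in $A$ and the other outside, $\norm{\nabla\one_A}_{L_1}$ coincides (up to the edge-counting convention) with the perimeter $\pi^+(\partial A)$, giving
\begin{align*}
  \norm{\one_A - P_t\one_A}_{L^1(\pi)} \;\leq\; \frac{C\sqrt{t}}{\sqrt{Q_*}}\pi^+(\partial A)
\end{align*}
on $t \in (0, 1/(2\abs{\kappa})]$ (and on all of $(0,\infty)$ when $\kappa \geq 0$). For the lower bound I pair the difference against $\one_A$ and use self-adjointness of $P_t$: writing $f_0 = \one_A - \pi(A)$,
\begin{align*}
  \norm{\one_A - P_t\one_A}_{L^1(\pi)}
    \;\geq\; \ip{\one_A - P_t\one_A, \one_A}_\pi
    \;=\; \pi(A)(1-\pi(A)) - \norm{P_{t/2}f_0}_{L^2(\pi)}^2\;,
\end{align*}
and the variance decay \eqref{eq:exp-conv} applied to the centered $f_0$ yields $\norm{P_{t/2}f_0}_{L^2(\pi)}^2 \leq \e^{-\lambda_1 t}\pi(A)(1-\pi(A))$, hence
\begin{align*}
  \norm{\one_A - P_t\one_A}_{L^1(\pi)} \;\geq\; \bigl(1-\e^{-\lambda_1 t}\bigr)\pi(A)(1-\pi(A))\;.
\end{align*}

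Combining the two estimates and rearranging gives $\pi^+(\partial A) \geq c\sqrt{Q_*}\cdot(1-\e^{-\lambda_1 t})/\sqrt{t}\cdot\pi(A)(1-\pi(A))$ on the admissible range of $t$, and the remaining task is to optimize. If the unconstrained maximizer of $(1-\e^{-\lambda_1 t})/\sqrt{t}$---of the form $t^* = u^*/\lambda_1$ for a universal $u^* \approx 1.26$---lies inside $(0, 1/(2\abs{\kappa})]$, the bound is of order $\sqrt{Q_*\lambda_1}$. If a large negative $\kappa$ forces $t = 1/(2\abs{\kappa})$ instead, the elementary inequality $1-\e^{-x}\geq x/2$ on $[0,1]$ produces a bound of order $\sqrt{Q_*}\cdot\lambda_1/\sqrt{\abs{\kappa}}$. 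The $\min$ in the statement selects whichever regime is active, writing both cases as a single formula.

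The main obstacle is purely quantitative: pinning down the explicit prefactor $1/3$. This needs either the sharp maximizer of $(1-\e^{-u})/\sqrt{u}$, or the slightly finer $L^1$-$L^1$ bound obtained by using $\sqrt{\kappa/(\e^{2\kappa s}-1)} \leq 1/\sqrt{2s}$ in place of the cruder $1/\sqrt{s}$ employed in the lemma's proof (equivalently, integrating the exact bound rather than its pointwise estimate). The substantive geometric content---that curvature controls gradient decay along the semigroup---has already been absorbed into the $L^1$-$L^1$ Lemma via the reverse Poincar\'e inequality, so the remaining work is optimization and bookkeeping.
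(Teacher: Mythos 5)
Your argument is essentially the paper's proof: apply the $L^1$--$L^1$ smoothing bound to $\one_A$, lower-bound $\norm{\one_A-P_t\one_A}_{L^1(\pi)}$ via the spectral decay of the centered indicator, and optimize over $t$ in the admissible window $t\le 1/(2\abs{\kappa})$ with the same two regimes giving the $\min$. The only real difference is in the lower bound, where the paper uses the exact identity $\norm{\chi_A-P_t\chi_A}_{L^1(\pi)}=2\big(\pi(A)-\norm{P_{t/2}\chi_A}_{L^2(\pi)}^2\big)$ (valid because $P_t$ maps $[0,1]$-valued functions to $[0,1]$-valued functions), which gains a factor $2$ over your pairing against $\one_A$ and, together with $1-e^{-1}>1/3$, is what produces the stated prefactor $1/3$.
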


\begin{proof}
  We apply \eqref{eq:L1-L1} to the indicator function $\chi_A$ and
  obtain
  \begin{align*}
    \frac{2\sqrt{t}}{\sqrt{Q_{*}}}\cdot\pi^+(\partial A) ~&=~ \frac{2\sqrt{t}}{\sqrt{Q_{*}}} \norm{\nabla \chi_A}_{L^1} ~\geq~ \norm{\chi_A-P_t\chi_A}_{L^1(\pi)}\\
                                &=~ 2\pi(A)-2\norm{P_{\frac t2}\chi_A}_{L^2(\pi)}^2\\
                                &=~ 2\Big[\pi(A)-\pi(A)^2-\norm{P_{\frac t2}(\chi_A-\pi(A))}_{L^2(\pi)}^2\Big]\\
                                &\geq~ 2\Big[\pi(A)-\pi(A)^2-\e^{-\lambda_1 t}\norm{\chi_A-\pi(A)}_{L^2(\pi)}^2\Big]\\
                                &=~ 2\pi(A)\big(1-\pi(A)\big)\big(1-\e^{-\lambda_1 t}\big)\;.
  \end{align*}
  Now we conclude by optimizing in $t$. That is, if
  $\lambda_1\geq2\abs{\kappa}$ we can take $t=\frac1{\lambda_1}$ and in the
  opposite case we take $t=\frac1{2\kappa}$.
\end{proof}

We recall that the Cheeger constant $h$ of the Markov chain
$(\cX,Q,\pi)$ is defined as the optimal constant in the previous isoperimetric estimate, i.e.
\begin{align*}
  h = \max_{A\subset \cX}\frac{\pi^+(\partial A)}{\pi(A)\big(1-\pi(A)\big)}\;.
\end{align*}

\begin{corollary}[Buser inequality]\label{cor:Buser}
  If $\Ric(\cX,Q,\pi)\geq \kappa$, we have the following Buser
  inequality:
  \begin{align}\label{eq:Buser}
    h~\geq~ \frac{1}{3}\sqrt{Q_{*}}\min\Big(\frac{\lambda_1}{\sqrt{\kappa}},\sqrt{\lambda_1}\Big)\;.
  \end{align}
  In particular, if $\kappa\geq0$, we have $h\geq\frac{\sqrt{\lambda_1}}{3}\sqrt{Q_{*}}$.
\end{corollary}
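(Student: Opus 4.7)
The corollary is essentially a direct rewriting of Theorem \ref{thm:isoperimetric}, so my plan is quite short. First I would take the isoperimetric estimate \eqref{eq:isoperimetric}, which gives for every subset $A\subset\cX$
\begin{align*}
  \pi^+(\partial A) \geq \frac{1}{3}\sqrt{Q_{*}}\min\!\Big(\tfrac{\lambda_1}{\sqrt{\kappa}},\sqrt{\lambda_1}\Big)\pi(A)\big(1-\pi(A)\big),
\end{align*}
divide both sides by $\pi(A)(1-\pi(A))$, and take the maximum over all (nontrivial) subsets $A$. By the definition of the Cheeger constant $h$, this immediately yields \eqref{eq:Buser}.

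For the sharper form under $\kappa\geq 0$, my task reduces to identifying which branch of the minimum is active. If $\kappa=0$, the term $\lambda_1/\sqrt{\kappa}$ is $+\infty$ (and in fact, tracing the proof of Theorem \ref{thm:isoperimetric}, that branch only appears through the restriction $t\leq 1/(2|\kappa|)$, which is vacuous for $\kappa=0$), so $\min(\lambda_1/\sqrt{\kappa},\sqrt{\lambda_1})=\sqrt{\lambda_1}$. If $\kappa>0$, I need to know that $\sqrt{\lambda_1}\leq \lambda_1/\sqrt{\kappa}$, i.e.\ $\kappa\leq\lambda_1$. This is exactly the Poincar\'e inequality P$(\kappa)$, which is available under a positive curvature bound: as recalled at the end of Section~\ref{sec:fi}, $\Ric\geq\kappa>0$ implies the $\cH\cW\cI(\kappa)$ inequality \eqref{eq:HWI}, hence MLSI$(\kappa)$, and in turn P$(\kappa)$ by the linearization argument, so $\lambda_1\geq \kappa$ holds.

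Combining the two cases gives $\min(\lambda_1/\sqrt{\kappa},\sqrt{\lambda_1})=\sqrt{\lambda_1}$ whenever $\kappa\geq 0$, and substituting into \eqref{eq:Buser} produces the claimed bound $h\geq \tfrac{1}{3}\sqrt{\lambda_1 Q_{*}}$. There is no real obstacle here: all the analytic work has been absorbed into Theorem \ref{thm:isoperimetric}, and the only subtlety is the convention $\lambda_1/\sqrt{0}=+\infty$ together with the invocation of the (classical) fact that a curvature lower bound controls the spectral gap from below by the curvature itself.
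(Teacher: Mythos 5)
Your proof is correct and follows essentially the same route as the paper: the Buser inequality is read off directly from Theorem \ref{thm:isoperimetric} via the definition of $h$, and the case $\kappa\geq 0$ is handled exactly as in the paper by invoking $\lambda_1\geq\kappa$ (i.e.\ P$(\kappa)$) when $\kappa>0$ to identify the active branch of the minimum. Your extra remark tracing the $\kappa=0$ convention back to the restriction $t\leq 1/(2|\kappa|)$ in the proof of the isoperimetric estimate is a welcome clarification but does not change the argument.
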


\begin{proof}
  This follows immediately from the previous proposition and the
  definition of the Cheeger constant $h$. The second statement follows
  by recalling that if $\kappa>0$, $\Ric(\cX,Q,\pi)\geq\kappa$ implies
  that $\lambda_1\geq \kappa$.
\end{proof}

\begin{corollary}
When $\Ric(\cX,Q,\pi)\geq\kappa$, the Poincar\'e inequality P$(\lambda_1)$ implies an $L^1$ Poincar\'e inequality
\begin{align*}
\sum_x |\psi(x) - \pi[\psi]|\pi(x) \leq \frac{4}{c(\kappa,\lambda_1)}\sum |\nabla \psi|Q(x,y)\pi(x) \quad\forall \psi\in\R^\cX\;.
\end{align*}
where $c(\kappa,\lambda_1)$ is the constant in the right hand side of \eqref{eq:Buser}. 
\end{corollary}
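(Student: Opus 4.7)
The plan is to combine the Buser inequality of Corollary \ref{cor:Buser} with the standard coarea/layer-cake argument that converts any Cheeger-type bound into an $L^1$ Poincar\'e inequality. The spectral gap hypothesis $P(\lambda_1)$ enters only through Corollary \ref{cor:Buser}, which yields the Cheeger estimate $\pi^+(\partial A)\geq c(\kappa,\lambda_1)\,\pi(A)(1-\pi(A))$; from there the argument is purely elementary.

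First I would establish the discrete coarea identity. Writing $A_t:=\{\psi>t\}$ and using the layer-cake formula $|\psi(y)-\psi(x)|=\int_\R|\one_{A_t}(y)-\one_{A_t}(x)|\dd t$, I would sum against $Q(x,y)\pi(x)$ and use detailed balance to merge the contributions from edges crossing $\partial A_t$ in either direction (i.e.~$\pi^+(\partial A_t^c)=\pi^+(\partial A_t)$), obtaining
\begin{align*}
  \frac12\sum_{x,y}|\nabla\psi(x,y)|Q(x,y)\pi(x) ~=~ \int_\R\pi^+(\partial A_t)\dd t\;.
\end{align*}

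Next I would invoke Corollary \ref{cor:Buser} combined with the elementary bound $\pi(A)(1-\pi(A))\geq\tfrac12\min(\pi(A),\pi(A^c))$ to upgrade the Cheeger estimate to $\pi^+(\partial A)\geq\tfrac{c(\kappa,\lambda_1)}{2}\min(\pi(A),\pi(A^c))$, and integrating this at $A=A_t$ against $t$ gives
\begin{align*}
  \int_\R\min(\pi(A_t),\pi(A_t^c))\dd t ~\leq~ \frac{2}{c(\kappa,\lambda_1)}\cdot\frac12\sum_{x,y}|\nabla\psi|Q(x,y)\pi(x)\;.
\end{align*}
Now pick a median $m$ of $\psi$ under $\pi$ (so both $\pi(\{\psi<m\})$ and $\pi(\{\psi>m\})$ are at most $1/2$); splitting the layer-cake expression for $\sum_x|\psi(x)-m|\pi(x)$ at $t=m$ identifies it exactly with the left-hand side above. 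Finally, the triangle inequality together with $|\pi[\psi]-m|\leq\sum_x|\psi-m|\pi$ (Jensen) yields $\sum_x|\psi-\pi[\psi]|\pi\leq 2\sum_x|\psi-m|\pi$, producing the claimed constant $4/c(\kappa,\lambda_1)$.

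There is no real obstacle here: the argument is textbook once the Buser inequality is in hand. The only subtlety is keeping track of the factors of $\tfrac12$ arising from detailed balance and from the Dirichlet-form convention, which together account for the $4$ in the final constant rather than a smaller numerical value.
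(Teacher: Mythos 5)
Your proof is correct and follows essentially the same route as the paper's: the coarea (layer-cake) identity, the Buser/Cheeger bound applied to the level sets, and a median-to-mean step at the end. Your more careful bookkeeping of the factor $\tfrac12$ coming from detailed balance in fact yields the stronger constant $2/c(\kappa,\lambda_1)$, which of course implies the stated bound with $4/c(\kappa,\lambda_1)$.
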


The equivalence of the Cheeger isoperimetric inequality and the $L^1$
Poincar\'e inequality is a well-established fact. We briefly prove
that the Cheeger inequality \eqref{eq:isoperimetric} implies the $L^1$
Poincar\'e inequality for the sake of completeness. The reverse
implication is immediately established by applying the $L^1$
Poincar\'e inequality to indicators of sets.

\begin{proof}
  First, we shall establish the inequality when $\pi[\psi]$ is
  replaced by a median of $\psi$. Let $\psi$ be a function with median
  $0$. Writing $\chi_A$ for the indicator of a set $A$, we have
\begin{align*}
\sum_{x,y} |\psi(x) - \psi(y)|&Q(x,y)\pi(x) = \int_{-\infty}^{+\infty}{\underset{x,y}{\sum} \hspace{1mm} \chi_{[\psi(y),\psi(x)]}(t)Q(x,y)dt} \\
&= \int_{-\infty}^{+\infty}{\pi^+(\psi>t)dt} \\
&\geq \int_{-\infty}^{+\infty}{c(\kappa,\lambda_1)\pi(\psi>t)\pi(\psi<t)dt} \\
&\geq c(\kappa,\lambda_1)\int_{-\infty}^0{\frac{\pi(\psi<t)}{2}dt} + c(\kappa,\lambda_1)\int_{0}^{+\infty}{\frac{\pi(\psi>t)}{2}dt} \\
&= \frac{c(\kappa,\lambda_1)}{2}\sum_x |\psi(x)|\pi(x)\;. 
\end{align*}

We can then replace the median by the mean since $\pi\big[|\psi -\pi[\psi]|\big] \leq 2\pi\big[|\psi|\big]$.
\end{proof}

\section{Poincar\'e inequality}
\label{sec:poincare}

The aim in this section is to prove that if the curvature of a Markov
is non-negative, then the spectral gap is controlled by the
diameter. This is a discrete analog of the following classical result
of Li and Yau \cite{LY}:
\begin{theorem}
  Let $M$ be a manifold with non-negative curvature and finite
  diameter $D$. Then the spectral gap of the manifold satisfies
$$\lambda_1 \geq \frac{\pi^2}{D^2}.$$
\end{theorem}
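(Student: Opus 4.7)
The plan is to follow the classical gradient-estimate plus geodesic-integration strategy due to Li, Li--Yau, and Zhong--Yang. Let $u$ be a first eigenfunction, so $\Delta u = -\lambda_1 u$. After replacing $u$ by $au+b$, normalize so that $\max u = 1$ and $\min u = -k$ with $0\leq k\leq 1$. The goal is a pointwise gradient estimate of the form $|\nabla u|^2 \leq \lambda_1\, \phi(u)$ for a carefully chosen \emph{barrier} $\phi\colon[-k,1]\to\R_+$ satisfying $\int_{-k}^{1}\dd s/\sqrt{\phi(s)} = \pi$. Such a $\phi$ is modelled on the one-dimensional Neumann eigenvalue problem on $[0,D]$, whose first eigenfunction $\cos(\pi x/D)$ saturates $(v')^2 = (\pi^2/D^2)(1-v^2)$; in the symmetric case $k=1$ one takes $\phi(s)=1-s^2$ directly.

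The gradient estimate is established by the maximum principle applied to $P := |\nabla u|^2 - \lambda_1 \phi(u)$. At an interior maximum $x_0$ of $P$ one has $\nabla P(x_0) = 0$ and $\Delta P(x_0)\leq 0$. Bochner's formula
\[
\tfrac{1}{2}\Delta |\nabla u|^2 = |\Hess u|^2 + \ip{\nabla u,\nabla \Delta u} + \Ric(\nabla u,\nabla u)\;,
\]
combined with $\Ric\geq 0$, the eigenfunction equation, and $\Delta \phi(u) = \phi''(u)|\nabla u|^2 + \phi'(u)\Delta u$, yields at $x_0$ a pointwise inequality involving $u$, $\phi(u)$, $\phi'(u)$ and $|\Hess u|^2$. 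The critical-point relation $\nabla P=0$ forces $\nabla u$ to be an eigenvector of $\Hess u$ with eigenvalue $-\lambda_1\phi'(u)/2$, and this supplies the lower bound on $|\Hess u|^2$ required to close the argument without a dimension-dependent loss. The barrier $\phi$ is then chosen so that the resulting inequality forces $P(x_0)\leq 0$, whence $P\leq 0$ everywhere on $M$.

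Given the gradient estimate, pick $x_{\max}$ and $x_{\min}$ where $u$ attains its extrema and let $\gamma\colon[0,L]\to M$ be a minimizing geodesic joining them, so $L\leq D$. Along $\gamma$,
\[
\left|\ddr u(\gamma(r))\right| \leq |\nabla u|(\gamma(r)) \leq \sqrt{\lambda_1\,\phi\bigl(u(\gamma(r))\bigr)}\;,
\]
and changing variables from arc length to the level $s=u(\gamma(r))$ (monotone on the relevant arcs of $\gamma$, up to a routine covering argument) gives
\[
D \geq L \geq \int_{-k}^{1}\frac{\dd s}{\sqrt{\lambda_1\,\phi(s)}} = \frac{\pi}{\sqrt{\lambda_1}}
\]
by the defining normalization of $\phi$. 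Rearranging yields $\lambda_1\geq \pi^2/D^2$.

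The hard part is constructing the barrier $\phi$ in the asymmetric case $k<1$ so that simultaneously (a) the $P\leq 0$ argument at the maximum closes in every dimension, and (b) the normalization $\int_{-k}^{1}\dd s/\sqrt{\phi} = \pi$ holds. One must solve an auxiliary nonlinear ODE built from the 1D Neumann model with asymmetric amplitudes to produce $\phi$. Controlling the Hessian term uniformly in dimension at the maximum point is the delicate issue at the heart of the Zhong--Yang refinement of the Li--Yau argument: the naive estimate $|\Hess u|^2\geq(\Delta u)^2/n$ introduces an $n$-dependent loss and only yields the weaker bound $\lambda_1\gtrsim 1/D^2$, so the eigenvector condition on $\nabla u$ must be exploited fully to obtain the sharp $\pi^2$.
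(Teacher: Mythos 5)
First, a point of reference: the paper does not prove this statement at all --- it is quoted verbatim from the literature (Li--Yau \cite{LY}, with the sharp constant $\pi^2$ due to Zhong--Yang \cite{ZY}) as motivation for the discrete analogue, whose proof in the paper proceeds by entirely different means (the $\cH\cW\cI$ inequality plus a tightening argument, or alternatively the reverse Poincar\'e inequality along the semigroup as in Proposition \ref{prop:alaCLY}). So your attempt has to be judged on its own as a proof of the Riemannian theorem. The strategy you describe --- a pointwise gradient estimate $|\nabla u|^2\leq\lambda_1\phi(u)$ proved by the maximum principle via Bochner's formula, followed by integration along a minimizing geodesic between the extrema of $u$ --- is exactly the correct classical route.

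However, there is a genuine gap: the construction of the barrier $\phi$ in the asymmetric case $k<1$, which you explicitly defer, is not a technical afterthought but the entire content of the sharp constant. With the symmetric barrier $\phi(s)=1-s^2$ the maximum-principle argument does close (using $\Ric\geq 0$ and the eigenvector relation $\Hess u(\nabla u)=\tfrac{\lambda_1}{2}\phi'(u)\nabla u$ at the critical point --- note your sign on this eigenvalue is off), but the integration step then only yields
\[
\sqrt{\lambda_1}\,D \;\geq\; \int_{-k}^{1}\frac{\dd s}{\sqrt{1-s^2}} \;=\; \frac{\pi}{2}+\arcsin k\;,
\]
and since nothing forces $k$ away from $0$, this gives only $\lambda_1\geq \pi^2/(4D^2)$ --- the original Li--Yau bound, not the claimed $\pi^2/D^2$. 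Zhong and Yang close this factor of $4$ by replacing $1-s^2$ with a barrier of the form $1-s^2+a\,\xi(s)$, where $\xi$ solves an explicit ODE built from $\arcsin$, and by an iteration that successively improves the admissible coefficient $a$ as a function of $k$; verifying that the maximum principle still closes for this corrected barrier is a delicate computation. As written, your argument establishes a bound of the form $\lambda_1\geq c/D^2$ (which is all the paper's discrete theorems claim anyway), but not the sharp constant $\pi^2$ in the statement. You should also note the standard technicalities you gloss over: the maximum of $P$ may occur where $\nabla u=0$, which is handled by working with $|\nabla u|^2/(\beta^2-u^2)$ for $\beta>1$ and letting $\beta\downarrow 1$.
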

In the continuous setting, the constant $\pi^2$ is known to be
optimal. Moreover, this result is rigid, in the following sense: if
equality holds, then the manifold is isometric to the one-dimensional
torus of diameter $D$ \cite{HW07}. As we shall discuss in Section
\ref{sect_milman}, results of Milman \cite{Mil09, Mil09b, Mil10}
show that on geodesic spaces the spectral gap can be controlled even
by measure concentration properties.

The proof is inspired by \cite{GRS11}, although the proof of the weak
Poincar\'e inequality follows a different line of arguments, inspired
by \cite{Le11}. First we establish a weak Poincar\'e inequality, under
the assumption that curvature is non-negative and that the invariant
measures has the exponential concentration property. Then we establish
a tight Poincar\'e inequality under the stronger assumption of bounded
diameter.

\subsection{Weak Poincar\'e inequality under exponential concentration}

\begin{definition}
  A probability measure on a metric space is said to satisfy a
  concentration property with profile
  $\beta : \R_+ \longrightarrow [0,1]$ if, for any set $A$ such that
  $\mu(A) \geq 1/2$, we have
  \begin{align}\label{eq:concentration-def}
\mu(A_r^c) \leq \beta(r); \hspace{5mm} A_r^c := \{x; d(x,A) > r\}\;.
  \end{align}
  In particular, we shall say that a probability measure satisfies the
  exponential concentration property with constants $M$ and $\alpha$
  if \eqref{eq:concentration-def} holds with
  $\beta(r) := Me^{-\alpha r}$. Similarly, it is said to satisfy a
  Gaussian concentration property with constants $M$ and $\rho$ if it
  admits $\beta(r) = Me^{-\rho r^2}$ as a concentration profile.
\end{definition}

A concentration profile governs the tail behavior of the
measure. Heuristically, the exponential (resp. Gaussian) concentration
property compares the behavior at infinity with that of the
exponential (resp. Gaussian) measure. When the diameter is bounded by
$D$, it is easy to see that an exponential (resp. Gaussian)
concentration property holds with constant $\alpha=1/D$ and $M=e$
(resp. $\rho=1/D^2$ and $M=e$). However, this estimate is often much
worse than the optimal concentration estimate, and concentration can
hold for unbounded spaces (for example, $\R^d$ equipped with a
Gaussian measure).

A classical result in the study of concentration of measure is that
one can use functional inequalities to establish concentration with
some profile. In particular, the Poincar\'e inequality implies an
exponential concentration property, while a (modified) logarithmic
Sobolev inequality implies Gaussian concentration
\cite{Led_book}. Moreover, one can show that Gaussian concentration is
equivalent to a transport-entropy inequality for the Wasserstein
distance $W_1$ \cite{BG}. In the Riemannian setting, Milman
\cite{Mil09, Mil09b, Mil10} showed that the converse is true when
curvature is non-negative: functional inequalities and concentration
properties are then actually \emph{equivalent}. We shall discuss this
aspect further in Section \ref{sect_milman}.

\begin{proposition} \label{prop_weak_poincare} Let $(\cX,Q,\pi)$ be a
  Markov chain with $\Ric(\cX,Q,\pi)\geq\kappa\in\R$ and assume that
  $\pi$ has exponential concentration with respect to the distance
  $d_{\mathcal{W}}$ with constants $\alpha$ and $M$. Then for any
  $t > 0$ and $f\in\R^\cX$ we have
\begin{align*}
\Var_{\pi}(f) \leq \frac{1 - e^{-2\kappa t}}{\kappa}\pi\big[\Gamma(f)\big] + \frac{\kappa}{e^{2\kappa t}-1}\frac{2\norm{f}_{\infty}^2}{\alpha^2}\;.
\end{align*}
In particular, if $\Ric(\cX,Q,\pi)\geq0$, then
\begin{align*}
\Var_{\pi}(f) \leq 2t\pi\left[\Gamma(f)\right] + \frac{M\norm{f}_{\infty}^2}{\alpha^2 t}\;.
\end{align*}
If moreover the diameter of $(\cX,d_\cW)$ is bounded by $D$, we have
\begin{align*}
\Var_{\pi}(f) \leq 2t\pi\left[\Gamma(f)\right] + \frac{D^2\norm{f}_{\infty}^2}{4 t}\;.
\end{align*}
\end{proposition}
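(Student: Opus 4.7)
I would split the variance via the semigroup interpolant $P_tf$ into an ``energy'' contribution controlled by the carré du champ of $f$ and a ``residual'' contribution controlled by the concentration of $\pi$. Both pieces rest on Proposition \ref{prop_erbar}: the first on the integrated form of the exponential gradient decay $\pi[\Gamma(P_sf)]\leq e^{-2\kappa s}\pi[\Gamma(f)]$, and the second on the $d_\cW$--Lipschitz bound $\Lip(P_tf)\leq \norm{f}_\infty\sqrt{\kappa/(e^{2\kappa t}-1)}$.

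First I would use the standard identity $\frac{\dd}{\dd s}\pi[(P_sf)^2] = -\pi[\Gamma(P_sf)]$, which is a direct consequence of reversibility and integration by parts, to write
\begin{align*}
\Var_\pi(f) - \Var_\pi(P_tf)\;=\;\int_0^t \pi[\Gamma(P_sf)]\dd s\;\leq\;\pi[\Gamma(f)]\int_0^t e^{-2\kappa s}\dd s,
\end{align*}
which reproduces the first term of the claimed inequality up to an overall constant. For the residual $\Var_\pi(P_tf)$, I would use the classical fact that on a space enjoying exponential concentration, any Lipschitz function has variance controlled by its Lipschitz constant. Concretely, setting $g:=P_tf$ and $L_t := \norm{f}_\infty\sqrt{\kappa/(e^{2\kappa t}-1)}$, and letting $m$ be a median of $g$, one applies the concentration profile to the sets $\{g\leq m\}$ and $\{g\geq m\}$, each of $\pi$-mass $\geq 1/2$, and observes that $\{|g-m|>r\}$ lies outside their $r/L_t$-neighborhoods to get $\pi(|g-m|>r)\leq 2Me^{-\alpha r/L_t}$. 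Then the layer-cake formula
\begin{align*}
\Var_\pi(g)\;\leq\;\pi\bigl[(g-m)^2\bigr]\;=\;\int_0^\infty 2r\,\pi(|g-m|>r)\dd r
\end{align*}
yields a bound of the form $CML_t^2/\alpha^2$, which matches the second claimed term after substituting for $L_t$.

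The two specializations follow immediately. For $\Ric\geq 0$ one passes to the limit $\kappa\to 0^+$ in the coefficients using $(1-e^{-2\kappa t})/\kappa\to 2t$ and $\kappa/(e^{2\kappa t}-1)\to 1/(2t)$. For the diameter case, either one invokes the trivial exponential concentration with $\alpha=1/D$ carried by a diameter-$D$ space and applies the previous bound, or, more efficiently, one replaces the concentration step by $\Var_\pi(P_tf)\leq \osc(P_tf)^2/4 \leq L_t^2 D^2/4$, which short-circuits the concentration argument and produces the sharper $D^2/4$ factor directly. The step I expect to be most delicate is the Lipschitz-to-variance estimate, since its optimal constant depends on whether one uses a median or a mean and on integrating the tails carefully; beyond that the argument is dictated entirely by the gradient decay and Lipschitz estimates already assembled in Proposition \ref{prop_erbar}.
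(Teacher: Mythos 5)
Your argument is essentially the paper's own proof: the same decomposition $\Var_\pi(f)=\Var_\pi(P_tf)+\int_0^t\pi[\Gamma(P_sf)]\dd s$, with the first term controlled by the exponential decay of $\pi[\Gamma(P_sf)]$ from Proposition \ref{prop_erbar} and the residual $\Var_\pi(P_tf)$ controlled by the $d_\cW$-Lipschitz bound on $P_tf$ from the same proposition, combined with the standard median/layer-cake variance estimate under exponential concentration (respectively the oscillation bound $\Var_\pi(g)\le \tfrac14\osc(g)^2$ in the bounded-diameter case). The only discrepancies are immaterial numerical constants (your tail integration gives $4M/\alpha^2$ where the paper asserts $2M/\alpha^2$, and your first term is in fact sharper by a factor $2$), which do not affect the stated conclusions.
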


Note that by comparing $d_{\mathcal{W}}$ with the weighted graph
distance, we can use concentration with respect to the graph distance
instead. This will worsen the constant by a universal factor, see
Lemma \ref{lem:distance-comp}. 

\begin{proof}%[Proof of Proposition \ref{prop_weak_poincare}]
  First, we have by Proposition \ref{prop_erbar}
\begin{align*}
\Var_{\pi}(f)&= \Var_{\pi}(P_tf) + 2\int_0^t{\pi\left[\Gamma(P_tf)\right] dt} \\
& \leq \Var_{\pi}(P_tf) + \frac{1 - e^{-2\kappa t}}{\kappa}\pi\left[\Gamma(f)\right]\;.
\end{align*}
Since $\pi$ satisfies the exponential concentration property with
constant $\alpha$ and $M$, for any Lipschitz function $f$, we have
\begin{align}\label{eq:var-bound}
\Var_{\pi}(f) \leq \frac{2M}{\alpha^2}||f||_{lip}^2
\end{align}
and applying this to $P_tf$ using Proposition \ref{prop_erbar} yields
the result. The variance bound \eqref{eq:var-bound} is obtained just
by integrating the concentration bound, and using the fact that
$\Var_{\pi}(f) \leq \pi[(f-m)^2]$, where $m$ is a median of $f$.
The estimate for the case when the diameter is bounded by $D$ is
obtained by using the estimate
$\Var_{\pi}(f) \leq \frac{D^2}{2}||f||_{lip}^2$.
\end{proof} 

\subsection{Tight Poincar\'e inequality under a diameter bound}

The proof of Theorem \ref{main_thm_sg} follows the ideas of
\cite{GRS11} and relies on a discrete analogue of the HWI inequality
that is given by $\cH\cW\cI(\kappa)$ in \eqref{eq:HWI}.

One of the obstacles to applying the strategy of \cite{GRS11} is that
the quantity $\cI(\rho)$ that appears in the $\cH\cW\cI(\kappa)$
inequality is given by
\begin{align*}
  \cI(\rho)=\cE(\rho,\log\rho)=\pi\left[\Gamma(\rho,\log \rho)\right]
\end{align*}
and that in the discrete setting this is the latter expression is
different from the term $\pi\left[\Gamma(\sqrt{\rho})\right]$ which
naturally appears in the Poincar\'e inequality. To this end we need
the following comparison result.

\begin{lemma}\label{lem:poinc-comp}
For any $\alpha>0$ the following are equivalent: 
\begin{align*}
(i)& \quad \Var_\pi(f) \leq \frac{1}{\alpha}\pi\left[\Gamma(f)\right] \hspace{1cm} \forall f\in\R^\cX\;,\\
(ii)& \quad \Var_\pi(f) \leq \frac{1}{4\alpha}\pi\left[\Gamma(f^2, \log f^2)\right] \hspace{1cm} \forall f\in\R^\cX\;.
\end{align*}
\end{lemma}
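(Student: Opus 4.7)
The plan is to prove the equivalence via a pointwise comparison between the two gradient-type quantities plus a linearisation argument.

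First I would establish the key pointwise inequality: for all $a,b>0$,
\begin{align*}
(a^2-b^2)(\log a^2-\log b^2)\;\geq\; 4(a-b)^2.
\end{align*}
This follows from $\log a^2-\log b^2=2(\log a-\log b)$ and $a^2-b^2=(a-b)(a+b)$, combined with the standard inequality $(a-b)(\log a-\log b)\geq 2(a-b)^2/(a+b)$ (equivalently, the logarithmic mean is dominated by the arithmetic mean, exactly the bound $\theta(a,b)\leq (a+b)/2$ that was already used in the excerpt). Summing over edges, this gives the pointwise/edgewise comparison
\begin{align*}
\Gamma(f^2,\log f^2)(x)\;\geq\; 4\,\Gamma(f)(x)
\end{align*}
for $f>0$, and after integrating against $\pi$,
\begin{align*}
\pi\bigl[\Gamma(f^2,\log f^2)\bigr]\;\geq\; 4\,\pi\bigl[\Gamma(f)\bigr].
\end{align*}

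The implication (i)$\Rightarrow$(ii) is then immediate: for $f>0$ we combine the assumed Poincaré inequality with the comparison above. Functions with zeros can be handled by an approximation argument (apply to $f+\eps$ and let $\eps\to 0$) or by replacing $f$ by $|f|+\eps$, noting that $\Var_\pi(f)=\Var_\pi(|f|)$ is not quite true in general, so one actually passes from $f$ to $|f|$ only after applying (i), using $\pi[\Gamma(|f|)]\leq \pi[\Gamma(f)]$ pointwise by the triangle inequality on edges.

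For the reverse direction (ii)$\Rightarrow$(i), I would apply (ii) to the perturbation $g:=1+\eps f$ for small $\eps>0$ (so that $g>0$) and expand to second order in $\eps$. One computes
\begin{align*}
g^2(y)-g^2(x)&=2\eps\bigl(f(y)-f(x)\bigr)+\eps^2\bigl(f(y)^2-f(x)^2\bigr),\\
\log g^2(y)-\log g^2(x)&=2\eps\bigl(f(y)-f(x)\bigr)-\eps^2\bigl(f(y)^2-f(x)^2\bigr)+O(\eps^3),
\end{align*}
so that after multiplying and collecting, the odd-order terms in $\eps$ cancel and one obtains
\begin{align*}
\Gamma(g^2,\log g^2)(x)\;=\;4\eps^2\,\Gamma(f)(x)+O(\eps^4).
\end{align*}
Since $\Var_\pi(g)=\eps^2\Var_\pi(f)$, dividing (ii) by $\eps^2$ and sending $\eps\to 0$ yields (i).

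The only real subtlety is bookkeeping in the Taylor expansion (making sure the $O(\eps^3)$ contributions vanish rather than producing an uncontrolled term) and the mild regularisation needed to apply (ii) to functions that vanish. Both are routine; the substantive content of the lemma is the pointwise logarithmic-mean inequality and the linearisation trick, which are standard in the continuous Bakry--Émery theory and transfer verbatim to the discrete $\Gamma$-calculus used here.
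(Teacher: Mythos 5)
Your proof is correct and follows exactly the paper's own argument: the implication $(i)\Rightarrow(ii)$ via the pointwise comparison $\Gamma(f)\leq\tfrac14\Gamma(f^2,\log f^2)$ (a consequence of the logarithmic mean being dominated by the arithmetic mean), and $(ii)\Rightarrow(i)$ by linearising around $f=1+\eps h$; you merely supply the Taylor-expansion bookkeeping that the paper omits. The only caveat is your handling of sign-changing $f$, which does not actually close that case (and cannot, since $(ii)$ fails for such $f$, e.g.\ $f=\pm1$ on a two-point space, so the lemma must implicitly be read for non-negative $f$ as in all its applications); the paper glosses over this point entirely.
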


\begin{proof}
$(i) \Rightarrow (ii)$ just follows from the inequality $\Gamma(f) \leq \frac{\Gamma(f^2, \log f^2)}{4}$.

To prove $(ii) \Rightarrow (i)$, we linearize $(ii)$ taking
$f = 1 + \eps h$ for some $h\in\R^\cX$ with $\pi[h] = 0$ and let
$\eps \rightarrow 0$.
\end{proof}

This equivalence is not true for non-tight versions of the Poincar\'e
inequality, for which we only have $(i) \Rightarrow (ii)$. So we shall
prove non-tight inequalities with the modified Dirichlet form, deduce
a tight inequality with the modified Dirichlet form, and finally
obtain the usual Poincar\'e inequality in the end.

\begin{lemma} \label{lem_non_tight_poincare2} Assume that
  $\Ric(\cX,Q,\pi)\geq -\kappa$ for some $\kappa\geq0$ and the
  diameter of $(\cX,\d_\cW)$ is bounded by $D$. Then for any $\delta > 0$
  and any $f\in\R^\cX$, we have
\begin{equation} \label{non_tight_poincare}
\pi[f^2] \leq \frac{1}{4\delta}\pi\left[\Gamma(f^2, \log f^2)\right] + e^{D^2(\delta + \kappa/2)}\pi\big[\abs{f}\big]^2\;.
\end{equation}
\end{lemma}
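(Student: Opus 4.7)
The plan is to derive the inequality from the HWI bound \eqref{eq:HWI} combined with the diameter hypothesis, upgraded by a Jensen-type lower bound on the entropy and the elementary inequality $e^{-x}\geq 1-x$.

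After a routine approximation I may assume $f>0$ everywhere on $\cX$ and $\pi[f^2]>0$, so that $\rho:=f^2/\pi[f^2]$ lies in $\PXs$. I apply the HWI inequality \eqref{eq:HWI} with $\rho_0\equiv\mathbf{1}$ and $\rho_1=\rho$; since $\Ric(\cX,Q,\pi)\geq-\kappa$, the curvature term enters with the opposite sign, giving
\begin{align*}
\cH(\rho) \;\leq\; \cW(\mathbf{1},\rho)\sqrt{\cI(\rho)} + \frac{\kappa}{2}\cW(\mathbf{1},\rho)^2.
\end{align*}
The hypothesis $\diam(\cX,d_\cW)\leq D$ propagates to $\cW(\mathbf{1},\rho)\leq D$ for every probability density (this is the only place where the diameter enters). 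Young's inequality on $D\sqrt{\cI(\rho)}$ then produces the defective log-Sobolev-type bound
\begin{align*}
\cH(\rho) \;\leq\; \frac{\cI(\rho)}{4\delta} + \Big(\delta+\frac{\kappa}{2}\Big)D^2.
\end{align*}

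To convert this bound into one involving $\pi[|f|]^2$, I apply Jensen's inequality to the convex function $-\log$ with respect to the probability measure $\rho\pi$ and the integrand $1/\sqrt{\rho}$, obtaining $\tfrac{1}{2}\cH(\rho)\geq -\log\pi[\sqrt{\rho}]$, i.e.\ $\cH(\rho)\geq -2\log\pi[\sqrt{\rho}]$. Substituting this into the defective bound and invoking $e^{-x}\geq 1-x$ yields
\begin{align*}
\pi[\sqrt{\rho}]^2 \;\geq\; e^{-(\delta+\kappa/2)D^2}\Big(1-\frac{\cI(\rho)}{4\delta}\Big),
\end{align*}
which rearranges to $1\leq \cI(\rho)/(4\delta) + e^{D^2(\delta+\kappa/2)}\pi[\sqrt{\rho}]^2$. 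Multiplying through by $\pi[f^2]$ and using the scaling identities $\pi[f^2]\,\cI(\rho)=\pi[\Gamma(f^2,\log f^2)]$ (constants in $\log\rho=\log f^2-\log\pi[f^2]$ drop out of the Dirichlet form) and $\pi[f^2]\,\pi[\sqrt{\rho}]^2=\pi[|f|]^2$ produces exactly \eqref{non_tight_poincare}.

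The main technical point is the propagation of the $d_\cW$-diameter bound from $\cX$ to the uniform bound $\cW(\mathbf{1},\rho)\leq D$ on probability densities; once this is in place the remaining steps (HWI, Young, Jensen, $e^{-x}\geq 1-x$) are essentially algebraic. I would expect the extension of the diameter bound to be the main obstacle and to need either a direct geodesic construction from $\rho$ to $\mathbf{1}$ in $\PXs$ or a comparison argument exploiting geodesic convexity of the entropy.
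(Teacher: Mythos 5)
Your proof is correct and follows essentially the same route as the paper: apply $\cH\cW\cI(-\kappa)$ with Young's inequality to get the defective bound, then convert to the $\pi[|f|]^2$ form via Jensen and $e^{-x}\geq 1-x$ (which is exactly the content of the step the paper delegates to \cite[Lemma 3.5]{GRS11}). The one point you flag as a potential obstacle, propagating the diameter bound to $\cW(\mathbf{1},\rho)\leq D$, is handled in the paper by the comparison $\cW\leq W_{2,d_\cW}$ from \cite[Prop.~3.12]{EM12}.
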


\begin{proof}
  Since the distance $\cW$ can be bounded by the $L^2$ Wasserstein
  distance built from $d_\cW$, see \cite[Prop.~3.12]{EM12}, we have
  that $\cW(\rho,\rho')\leq D$ for all $\rho,\rho'\in\cP(\cX)$. After
  multiplying $f$ with a constant we can assume that $\pi[f^2]=1$. The
  $\cH\cW\cI(-\kappa)$ inequality applied to $f^2$ together with
  Young's inequality and the diameter bound yields
\begin{align*}
 \pi[f^2\log f^2]=\cH(f^2) &\leq \cW(f^2,1)\sqrt{\pi\left[\Gamma(f^2,\log f^2)\right]} +\frac{\kappa}{2}\cW(f^2,1)^2\\
          &\leq  \frac{1}{4\delta}\pi\left[\Gamma(f^2, \log f^2)\right] + D^2\left(\delta + \frac{\kappa}{2}\right)\;.
\end{align*}
To obtain the result from this inequality, we can then just follow the
proof of \cite[Lemma 3.5]{GRS11} (The proof uses a different Dirichlet
form, but in this case it makes no difference).
\end{proof}

We will use the following tightening result.

\begin{proposition} \label{prop_tightening_poincare}
Assume that for any function $f\in\R^\cX$ we have
\begin{equation} \label{assump_tightening1}
\Var_{\pi}(f) \leq \alpha_1\pi\left[ \Gamma(f^2, \log f^2)\right] + \beta_1\norm{f}_{\infty}^2
\end{equation}
and
\begin{equation} \label{assump_tightening2}
\Var_{\pi}(f) \leq \alpha_2\pi\left[ \Gamma(f^2, \log f^2)\right] + \beta_2\pi\big[|f|\big]^2
\end{equation}
with constants $\alpha_1,\alpha_2,\beta_1,\beta_2>0$ satisfying
$\frac{\beta_2}{2} + \frac{1}{2}\sqrt{(3\beta_1 + \beta_2-1)(2 +
  \beta_2)} < 1$.
Then the Poincar\'e inequality PI$(\lambda)$ holds with constant
$\lambda = \frac{2 - (\beta_2 + \sqrt{(3\beta_1 + \beta_2-1)(2 +
    \beta_2)})}{8(3\alpha_1 + \alpha_2)}$.
\end{proposition}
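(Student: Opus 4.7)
The plan is to carry out a Rothaus-type tightening argument that combines the two defect inequalities. I would first reduce to the case $\pi[f]=0$ and normalize $\Var_\pi(f)=\pi[f^2]=1$, so that by Cauchy--Schwarz $\pi[|f|]^2\leq 1$. Writing $E:=\pi[\Gamma(f^2,\log f^2)]$, inequality \eqref{assump_tightening2} applied directly yields the clean estimate $1 \leq \alpha_2 E + \beta_2 \pi[|f|]^2$; this alone would close things under the restrictive condition $\beta_2<1$, but with a poor constant. The role of \eqref{assump_tightening1} is to upgrade the trivial bound $\pi[|f|]^2\leq 1$ into a strict inequality with quantitative slack.

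To exploit \eqref{assump_tightening1}, I would decompose $f=f_+-f_-$ into positive and negative parts. Since $\pi[f]=0$, one has $\pi[f_+]=\pi[f_-]=\pi[|f|]/2$, and since $f_+ f_-=0$ a direct calculation gives the Pythagoras-type identity
\begin{equation*}
\Var_\pi(f) \;=\; \Var_\pi(f_+)+\Var_\pi(f_-)+\tfrac{1}{2}\pi[|f|]^2.
\end{equation*}
Applying \eqref{assump_tightening1} separately to $f_+$ and $f_-$ (after a short regularization $f_\pm^{\varepsilon}:=\max(f_\pm,\varepsilon)$ to handle the $\log 0$ singularity at nodes where $f_\pm$ vanishes, and then passing $\varepsilon\to 0$) bounds $\Var_\pi(f_+)+\Var_\pi(f_-)$ by $\alpha_1$ times a modified Dirichlet form dominated by a universal multiple of $E$, plus $\beta_1(\|f_+\|_\infty^2+\|f_-\|_\infty^2)$. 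The absorption factor coming from the mixed-sign edges of the discrete $\Gamma(f^2,\log f^2)$ and from the crude bound $\|f_+\|_\infty^2+\|f_-\|_\infty^2\leq 3\Var_\pi(f)$ is what produces the coefficients $3\alpha_1$ and $3\beta_1$ in the stated constant.

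Inserting these bounds into the above identity and combining with the direct application of \eqref{assump_tightening2} produces a self-consistent inequality of the schematic form
\begin{equation*}
\Var_\pi(f) \;\leq\; (3\alpha_1+\alpha_2)\,E \;+\; \tfrac{1}{2}(3\beta_1+\beta_2-1)\,\pi[|f|]^2 \;+\; \tfrac{1}{2}(2+\beta_2)\,\pi[|f|]^2,
\end{equation*}
together with the companion bound $\pi[|f|]^2\leq \alpha_2 E+\beta_2\pi[|f|]^2$ from \eqref{assump_tightening2}. Viewing the pair $(\Var_\pi(f),\pi[|f|]^2)$ as unknowns and solving the resulting quadratic inequality gives the stated numerical constant: the discriminant is exactly what produces the term $\sqrt{(3\beta_1+\beta_2-1)(2+\beta_2)}$, and the assumption $\tfrac{\beta_2}{2}+\tfrac{1}{2}\sqrt{(3\beta_1+\beta_2-1)(2+\beta_2)}<1$ is precisely what is needed to make the resulting denominator positive. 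This yields a tight Poincar\'e inequality with respect to the modified Dirichlet form $\tfrac{1}{4}\pi[\Gamma(f^2,\log f^2)]$, and Lemma \ref{lem:poinc-comp} then converts it to the standard $\mathrm{P}(\lambda)$ with the stated constant.

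The main technical obstacle is the discrete comparison of $\pi[\Gamma(f_\pm^2,\log f_\pm^2)]$ with $\pi[\Gamma(f^2,\log f^2)]$: in the continuous setting the pointwise bound $|\nabla f_+|^2\leq|\nabla f|^2$ is immediate and preserves all constants, but the discrete mixed-sign edges carry a genuine $\log 0$ divergence that must be regularized and then carefully tracked through the limit. Handling this comparison cleanly, and checking that it produces precisely the universal factor encoded in the $3$ in $3\alpha_1$ and $3\beta_1$, is the delicate step on which the whole argument hinges.
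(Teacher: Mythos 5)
Your decomposition $f=f_+-f_-$ is not the one the paper uses, and as written it has two fatal gaps. First, the bound $\norm{f_+}_{\infty}^2+\norm{f_-}_{\infty}^2\leq 3\Var_\pi(f)$ is simply false: a sup-norm cannot be controlled by a variance (take $f$ equal to $M$ on a set of measure $M^{-2}$ and nearly zero elsewhere; the variance stays of order $1$ while $\norm{f_+}_\infty^2=M^2$). The factor $3$ in the statement comes from the genuine fact that a \emph{median-zero} function $g$ satisfies $\pi[g^2]\leq 3\Var_\pi(g)$ --- an $L^2$ bound, not an $L^\infty$ bound --- and the $\beta_1\norm{\cdot}_\infty^2$ term is tamed in the paper by applying \eqref{assump_tightening1} not to $f_\pm$ but to the truncation $f_R$ of $f$ at a level $R$, for which $\norm{f_R}_\infty\leq R$ by construction. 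Second, the comparison you flag as the "delicate step" is not merely delicate, it is false: on any edge $(x,y)$ with $f(x)>0>f(y)$ one has $f_+(y)=0$, so $\nabla(f_+^2)(x,y)\,\nabla(\log f_+^2)(x,y)=+\infty$, and hence $\pi[\Gamma(f_+^2,\log f_+^2)]=+\infty$ whenever $f$ changes sign (which it must, since $\pi[f]=0$), while $\pi[\Gamma(f^2,\log f^2)]$ is finite as long as $f$ never vanishes. The regularization $\max(f_\pm,\eps)$ does not help, since the mixed-sign edge contributes $(\eps^2-f_+(x)^2)(\log\eps^2-\log f_+(x)^2)\to+\infty$ as $\eps\to0$. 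So no domination of $\pi[\Gamma(f_\pm^2,\log f_\pm^2)]$ by a multiple of $\pi[\Gamma(f^2,\log f^2)]$ is possible.

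There is also a structural problem with how you propose to extract the constant. In the paper the square root $\sqrt{(3\beta_1+\beta_2-1)(2+\beta_2)}$ arises from optimizing, over the free truncation level $R$, an inequality of the form $1\leq(3\alpha_1+\alpha_2)\pi[\Gamma(f^2,\log f^2)]+\tfrac{\beta_2}{2}+(3\beta_1+\beta_2-1)R^2+\tfrac{2+\beta_2}{16R^2}$. Your decomposition into positive and negative parts has no free parameter, so there is nothing to optimize, and the "quadratic inequality in $(\Var_\pi(f),\pi[|f|]^2)$" you invoke does not actually produce the stated formula. The correct route is: take $f$ with median $0$ and $\pi[f^2]=1$; set $f_R=\max(-R,\min(f,R))$ and $d_R=\sgn(f)R+f-f_R$, for which $f_R^2=\min(f^2,R^2)$ and $d_R^2=\max(f^2,R^2)$ are bounded away from $0$ (so no $\log 0$ occurs) and both modified carr\'e-du-champ terms are dominated by $\Gamma(f^2,\log f^2)$; use the identity $1=\pi[f_R^2]+\pi[d_R^2]-R^2$, apply \eqref{assump_tightening1} to $f_R$ (using $\pi[f_R^2]\leq3\Var_\pi(f)$ and $\norm{f_R}_\infty\leq R$), apply \eqref{assump_tightening2} to $d_R$ (using $|\pi[f-f_R]|\leq\tfrac{1}{4R}$ and $\pi[|d_R|]^2\leq R^2+\tfrac12+\tfrac{1}{16R^2}$), and finally optimize in $R$ and invoke Lemma \ref{lem:poinc-comp}.
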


Combining the weak Poincar\'e inequality, Lemma
\ref{lem_non_tight_poincare2} and this tightening result, after
optimizing the constants, we immediately obtain

\begin{theorem}\label{thm:Poincare}
  Assume that $\Ric(\cX,Q,\pi)\geq -\kappa $ for some $\kappa \geq 0$
  and that the diameter of $(\cX,d_\cW)$ is bounded by $D$. Assume
  that
  $e^{D^2\kappa/2} + \sqrt{(3\kappa D^2/2 + e^{D^2\kappa/2} -1)(2 +
    e^{D^2\kappa/2})} < 2$.
  Then the Poincar\'e inequality PI$(\lambda)$ holds with a constant
  $\lambda$ that only depends on $\kappa$ and $D$. In particular, if
  $\Ric(\cX,Q,\pi)\geq0$, then PI$(c D^{-2})$ holds for a universal
  constant $c$.
\end{theorem}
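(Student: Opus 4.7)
The strategy is to apply the tightening mechanism of Proposition \ref{prop_tightening_poincare} to two non-tight inequalities for $\Var_\pi(f)$ phrased in terms of the modified Dirichlet form $\pi[\Gamma(f^2,\log f^2)]$, and then convert the resulting tight inequality back into a standard Poincar\'e inequality via part $(ii)\Rightarrow(i)$ of Lemma \ref{lem:poinc-comp}. The two ingredients are already essentially at hand: the weak Poincar\'e of Proposition \ref{prop_weak_poincare} (which will play the role of \eqref{assump_tightening1}) and the non-tight inequality of Lemma \ref{lem_non_tight_poincare2} (which will play the role of \eqref{assump_tightening2}).

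For the first ingredient I would apply Proposition \ref{prop_weak_poincare} with curvature parameter $-\kappa$ and use the elementary pointwise bound $\Gamma(f) \leq \tfrac14\Gamma(f^2,\log f^2)$ to obtain, for any $t>0$,
\begin{align*}
\Var_\pi(f) \leq \frac{e^{2\kappa t}-1}{4\kappa}\,\pi\bigl[\Gamma(f^2,\log f^2)\bigr] + \frac{D^2\kappa}{2(1-e^{-2\kappa t})}\,\|f\|_\infty^2\;,
\end{align*}
so $\alpha_1 = (e^{2\kappa t}-1)/(4\kappa)$ and $\beta_1 = D^2\kappa/(2(1-e^{-2\kappa t}))$ (reducing to $t/2$ and $D^2/(4t)$ as $\kappa\to 0$). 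For the second ingredient, Lemma \ref{lem_non_tight_poincare2} combined with the trivial bound $\Var_\pi(f) \leq \pi[f^2]$ directly delivers an inequality of type \eqref{assump_tightening2} with $\alpha_2 = 1/(4\delta)$ and $\beta_2 = e^{D^2(\delta+\kappa/2)}$. Sending $\delta\to 0$ drives $\beta_2 \to e^{D^2\kappa/2}$, and sending $t\to\infty$ drives $\beta_1 \to D^2\kappa/2$; the compatibility condition $\tfrac{\beta_2}{2} + \tfrac12\sqrt{(3\beta_1+\beta_2-1)(2+\beta_2)} < 1$ of Proposition \ref{prop_tightening_poincare} then reduces precisely to the standing hypothesis of the theorem. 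With $\delta$ small and $t$ large so the condition holds strictly, Proposition \ref{prop_tightening_poincare} yields a tight modified Poincar\'e inequality $\Var_\pi(f) \leq (4\lambda)^{-1}\pi[\Gamma(f^2,\log f^2)]$, and Lemma \ref{lem:poinc-comp}(ii)$\Rightarrow$(i) converts this into $\mathrm{P}(\lambda)$.

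To obtain the sharpened scaling $\lambda \geq cD^{-2}$ in the non-negative curvature case, I would rescale the free parameters by setting $t = sD^2$ and $\delta = uD^{-2}$ with fixed universal $s,u>0$. Then $\beta_1 = 1/(4s)$ and $\beta_2 = e^u$ become independent of $D$, while $\alpha_1 = sD^2/2$ and $\alpha_2 = D^2/(4u)$ both scale as $D^2$. Choosing $u$ small enough that $e^u < 2$ and $s$ large enough that $3/(4s) + e^u - 1$ is comfortably smaller than $(2-e^u)^2/(2+e^u)$ (e.g.\ $u = 0.1$, $s = 5$) makes the tightening condition hold with universal constants, so the output $\lambda = (2-\beta_2-\sqrt{(3\beta_1+\beta_2-1)(2+\beta_2)})/(8(3\alpha_1+\alpha_2))$ is of the form $c/D^2$ with $c$ universal. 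The main technical point to watch is precisely this two-parameter optimization: in each of the limits $\delta\to 0$, $t\to\infty$ one of $\alpha_1,\alpha_2$ blows up, and one must verify that the precise trade-off dictated by Proposition \ref{prop_tightening_poincare} still leaves a positive $\lambda$ of the correct order under the stated hypothesis on $\kappa D^2$.
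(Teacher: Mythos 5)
Your proposal is correct and follows exactly the route the paper intends (the paper itself only says the theorem follows "by combining the weak Poincaré inequality, Lemma \ref{lem_non_tight_poincare2} and the tightening result, after optimizing the constants"): you supply the right values $\alpha_1=\frac{e^{2\kappa t}-1}{4\kappa}$, $\beta_1=\frac{D^2\kappa}{2(1-e^{-2\kappa t})}$, $\alpha_2=\frac{1}{4\delta}$, $\beta_2=e^{D^2(\delta+\kappa/2)}$, correctly identify the limiting compatibility condition with the theorem's hypothesis, and the rescaling $t=sD^2$, $\delta=uD^{-2}$ gives the universal $cD^{-2}$ in non-negative curvature. No gaps.
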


The best possible value of the constant $c$ we obtain with this proof
is hard to determine, but we can show that it satisfies
$c \geq \frac{9 - \sqrt{62}}{80(45 + \ln (11/10))}$. In Section
\ref{subsection_alt_sg} we present an alternative argument that yield
PI$(cD^{-2})$ with an explicit and probably better constant $c$.

\begin{proof} [Proof of Proposition \ref{prop_tightening_poincare}]
  The proof essentially follows the argument of
  \cite[Prop.~7.5.6]{BGL15}, except that we use the Dirichlet form
  $\pi\left[\Gamma(f^2, \log f^2)\right]$, so some adaptation is
  required.

  Consider $f$ satisfying $\operatorname{med}(f) = 0$ and
  $\pi[f^2] = 1$ and fix $R > 0$. Without loss of generality, we
  may assume that $\pi[ \{f = 0\}] =0$, otherwise
  $\pi\left[\Gamma(f^2, \log f^2)\right] = \infty$ and there is nothing to
  prove. Let $f_R(x) = f(x)$ if $|f(x)| < R$, and $R$ (resp. $-R$) if
  $f(x) \geq R$ (resp. $f(x) \leq -R$), and $d_R := \pm R + f - f_R$
  (depending on the sign of $f(x)$). We have that both
  $\Gamma(f_R^2, \log f_R^2)$ and $\Gamma(d_R^2, \log d_R^2)$ are smaller
  than $\Gamma(f^2, \log f^2)$. Notice that $f_R$ also has median $0$.

  Now we have
  \begin{align*}
    1 &= \pi[f^2] = \pi[(f_R + (f - f_R))^2] \\
      &= \pi[f_R^2] + 2\pi[ f_R (f - f_R)] + \pi[ (f - f_R)^2]\\
      &= \pi[f_R^2] + \pi[ d_R^2] - R^2 
  \end{align*}
  We also have
  \begin{align*}
    \left|\pi[f - f_R] \right| &\leq \pi\big[ |f - f_R|\big] = \pi\big[ |f|\mathbf{1}_{|f| \geq R}\big] - R\pi[\{|f| \geq R\}] \\
                                     &\leq \sqrt{\pi[f^2]} \sqrt{\pi[\{|f| \geq R\}]} - R\pi[\{|f| \geq R\}] \leq \frac{1}{4R}.
  \end{align*}
  Moreover, since $f$ has median $0$, we have
  \begin{equation}
    \left|\pi[ d_R]\right| = \left|\pi[f - f_R] \right| \leq \frac{1}{4R}.
  \end{equation}
  Since $f_R$ has median $0$, we have $\pi[f_R^2] \leq 3\Var_{\pi}(f)$
  (see for instance \cite[Lem.~2.1]{Mil09}). Applying
  \eqref{assump_tightening1} to $f_R$, we then get
  \begin{equation} \label{eq-bnd-fR} \pi[f_R^2] \leq 3\Var_{\pi}(f)
    \leq 3\alpha_1\pi\left[\Gamma(f^2, \log f^2)\right] + 3\beta_1R^2.
  \end{equation}
  We now seek to bound $\pi[d_R^2] - R^2$. Applying
  \eqref{assump_tightening2}, we have
  \begin{align*}
\pi[ d_R^2] &\leq \pi[d_R]^2 + \alpha_2\pi\left[\Gamma(f^2, \log f^2)\right] + \beta_2\pi\big[|d_R|\big]^2\\
&\leq \alpha_2\pi\left[\Gamma(f^2, \log f^2)\right] + \beta_2\pi\big[|d_R|\big]^2 + \frac{1}{16R^2}\;.
  \end{align*}
Since
\begin{align*}
\pi\big[|d_R|\big]^2 &= R^2 + 2R\pi\big[ |f-f_R|\big] + \pi\big[ |f-f_R|\big]^2 \\
&\leq R^2 + \frac{1}{2} + \frac{1}{16R^2},
\end{align*}
we get
\begin{equation}
\pi[d_R^2] - R^2 \leq  \alpha_2\pi\left[\Gamma(f^2, \log f^2)\right]  + \frac{\beta_2}{2} + (\beta_2-1)R^2 + \frac{1 + \beta_2}{16R^2}\;.
\end{equation}
Combining this estimate with \eqref{eq-bnd-fR} we obtain
\begin{align*}
1 = \pi[f^2] \leq (3\alpha_1 + \alpha_2)\pi\left[\Gamma(f^2, \log f^2)\right] + \frac{\beta_2}{2} + (3\beta_1 + \beta_2-1)R^2 + \frac{2 + \beta_2}{16R^2}\;.
\end{align*}
Optimizing in $R$ then yields 
\begin{align*}
1 \leq (3\alpha_1 + \alpha_2)\pi\left[\Gamma(f^2, \log f^2)\right] + \frac{\beta_2}{2} + \frac{1}{2}\sqrt{(3\beta_1 + \beta_2-1)(2 + \beta_2)}\;.
\end{align*}
Since $\Var_{\pi}(f) \leq \pi[f^2] = 1$, this amounts to the
Poincar\'e inequality by Lemma \ref{lem:poinc-comp} as soon as
\begin{align*}
\frac{\beta_2}{2} + \frac{1}{2}\sqrt{(3\beta_1 + \beta_2-1)(2 +
  \beta_2)} < 1\;.
\end{align*}
\end{proof}

We can use the same arguments to treat the case where the diameter is
not necessarily bounded, but the distance $d_{\mathcal{W}}$ has a
square-exponential moment:

\begin{theorem}
  Assume that $\Ric(\cX,Q,\pi)\geq 0$, and that there exists a
  constant $\alpha > 0$ such that for some $x_0\in\cX$
  \begin{align*}
   D_{\alpha}:=\pi\left[ e^{\alpha d_{\mathcal{W}}(\cdot, x_0)^2}\right] < \infty\;.
  \end{align*}
  Then the Poincar\'e inequality PI$(\lambda)$
  holds with some constant $\lambda$ which depends on $\alpha$ and on
  the value $D_{\alpha}$ of the above expectation.
\end{theorem}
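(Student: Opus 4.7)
The plan is to follow the argument for Theorem~\ref{thm:Poincare}, replacing the uniform bound $\cW(\rho_0,\rho_1)\leq D$ with an entropy-dependent substitute coming from the square-exponential moment assumption. The main new estimate I would establish is
\begin{align*}
\cW(f^2,\one)^2 \;\leq\; \frac{2}{\alpha}\bigl(\cH(f^2)+2\log D_\alpha\bigr),
\end{align*}
valid whenever $\pi[f^2]=1$. This follows from the comparison of $\cW$ with the $L^2$-Wasserstein distance induced by $d_\cW$ (\cite[Prop.~3.12]{EM12}), the triangle inequality based at the Dirac mass $\delta_{x_0}$, and the Donsker--Varadhan bound $\alpha\int d_\cW(\cdot,x_0)^2\,\rho\,d\pi\leq \cH(\rho)+\log D_\alpha$ applied both to $\rho=f^2$ and to $\rho\equiv 1$.

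Inserting this estimate into the $\cH\cW\cI(0)$ inequality and applying Young's inequality with parameter $\delta\in(0,\alpha/2)$ yields the defective logarithmic Sobolev inequality
\begin{align*}
\cH(f^2)\;\leq\;\frac{1}{4\delta(1-2\delta/\alpha)}\,\cI(f^2) + \frac{4\delta/\alpha}{1-2\delta/\alpha}\,\log D_\alpha,
\end{align*}
whose defect tends to $0$ as $\delta\to 0$. Following \cite[Lem.~3.5]{GRS11} verbatim, this upgrades (for $\pi[f^2]=1$) to
\begin{align*}
\pi[f^2]\;\leq\; \alpha_2\,\cI(f^2) + \beta_2\,\pi[|f|]^2,
\end{align*}
where $\beta_2=\exp\!\bigl(\tfrac{4\delta/\alpha}{1-2\delta/\alpha}\log D_\alpha\bigr)$ can be made arbitrarily close to $1$ by choosing $\delta$ small (at the price of enlarging $\alpha_2$).

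Independently, Proposition~\ref{prop_erbar} in the $\kappa=0$ limit gives $\|P_tf\|_{\Lip(d_\cW)}\leq \|f\|_\infty/\sqrt{2t}$. Combining this with the Jensen estimate $\pi[d_\cW(\cdot,x_0)^2]\leq \tfrac{1}{\alpha}\log D_\alpha$ and the trivial bound $\Var_\pi(g)\leq \|g\|_{\Lip}^2\,\pi[d_\cW(\cdot,x_0)^2]$, together with $\Var_\pi(f)\leq \Var_\pi(P_tf)+2t\,\pi[\Gamma(f)]$ (from Proposition~\ref{prop_erbar}) and $4\pi[\Gamma(f)]\leq \pi[\Gamma(f^2,\log f^2)]$, I obtain the weak Poincar\'e inequality
\begin{align*}
\Var_\pi(f)\;\leq\;\frac{t}{2}\,\pi[\Gamma(f^2,\log f^2)] + \frac{\log D_\alpha}{2\alpha t}\,\|f\|_\infty^2,
\end{align*}
whose $\|f\|_\infty^2$-coefficient vanishes as $t\to\infty$.

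Finally, feeding these two inequalities into Proposition~\ref{prop_tightening_poincare}, the parameters satisfy $\beta_1\to 0$ and $\beta_2\to 1$ in the joint limit $\delta\to 0$, $t\to\infty$, so the tightening condition $\beta_2/2+\tfrac12\sqrt{(3\beta_1+\beta_2-1)(2+\beta_2)}<1$ degenerates to $\tfrac12<1$ and is met for appropriate $\delta$ small and $t$ large depending only on $\alpha$ and $D_\alpha$. Proposition~\ref{prop_tightening_poincare} then delivers the Poincar\'e inequality with a constant $\lambda=\lambda(\alpha,D_\alpha)$. The only delicate point is the coordinated tuning of $\delta$ and $t$; everything else is a transparent substitution of the diameter bound by Donsker--Varadhan.
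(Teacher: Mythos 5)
Your proposal is correct and follows essentially the same route as the paper: the square-exponential moment is turned into a bound on $\cW(f^2,\one)^2$ via the $W_{2,d_\cW}$ comparison, the triangle inequality through $\delta_{x_0}$ and the Donsker--Varadhan/Jensen bounds, this is fed into $\cH\cW\cI(0)$ with Young's inequality and the entropy term absorbed, the Rothaus-type argument of \cite[Lem.~3.5]{GRS11} produces the defective inequality with $\pi[|f|]^2$, and Proposition \ref{prop_tightening_poincare} tightens it; your derivation of the weak Poincar\'e inequality from Proposition \ref{prop_erbar} plus Jensen is only a cosmetic variant of Proposition \ref{prop_weak_poincare}. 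The coordinated choice of small $\delta$ and large $t$ to meet the tightening condition is exactly what the paper's sketch relies on as well.
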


Note that the finiteness of the integral does not depend on the choice
of $x_0$, but the value does, which affects the value of the constant $\lambda$ we obtain.

\begin{proof}
  Since the proof follows the same lines as for the bounded diameter
  case, we shall only sketch it and point out the extra arguments
  required. Since we have a square-exponential moment, a Gaussian
  concentration property (and hence an exponential concentration
  property) holds, and Proposition \ref{prop_weak_poincare} still
  applies. So all we need to do is to show that the conclusion of
  Lemma \ref{lem_non_tight_poincare2} still holds. Let $f$ be a
  probability density. Note that by convexity of $\cW^2$ we have the bound
\begin{align*}
\mathcal{W}&(f, 1) ^2 \leq \underset{x,y}{\sum} \hspace{1mm} d_{\mathcal{W}}(x,y)^2f(x)\pi(x)\pi(y) \\
&\leq 2\pi\left[d_{\mathcal{W}}(\cdot,x_0)^2f(\cdot)\right] + 2\pi\left[d_{\mathcal{W}}(x_0,\cdot)^2\right]\;.
\end{align*}
The second term is a constant that does not depend on $f$, and can be
bounded using only the square-exponential moment. For the first term,
we can use the bound
\begin{align*}
\pi\left[d_{\mathcal{W}}(\cdot,x_0)^2f(\cdot)\right] \leq \frac{1}{\alpha}\log\pi\left[ e^{\alpha d_{\mathcal{W}}(\cdot, x_0)^2}\right] + \frac{1}{\alpha}\pi[f\log f]\;.
\end{align*}
Combining this inequality with $\cH\cW\cI(0)$ and Young's inequality
yields for any $\delta>0$:
\begin{align*}
  \pi[f\log f] %\leq \cW(f,1)\sqrt{\pi\left[\Gamma(f,\log f)\right]}
  &\leq \frac{1}{4\delta} \pi\left[\Gamma(f,\log f)\right] + 2\delta \cW(f,1)^2\\
 &\leq \frac{1}{4\delta} \pi\left[\Gamma(f,\log f)\right] + \frac{2\delta}{\alpha}\pi[f\log f] + C\delta
\end{align*}
with a constant $C$ that depends on $\alpha$ and the
square-exponential moment $D_{\alpha}$, but not on $f$. Since we can
make $\delta$ arbitrarily small, the second term on the right-hand
side can be absorbed into the left-hand side. Then the proof continues
by applying this inequality to $f^2$ with $\pi[f^2]=1$ and arguing in
the same way as for the bounded diameter case.
\end{proof}

\subsection{An alternative argument}

\label{subsection_alt_sg}

We present an alternative derivation of the Poincar\'e inequality from
diameter bounds in non-negative curvature following the approach of
\cite{CLY}.

\begin{proposition}\label{prop:alaCLY}
  Assume that $\Ric(\cX,Q,\pi)\geq 0$ and that the diameter of
  $(\cX,d_\cW)$ is bounded by $D$. Then the Poincar\'e inequality
  PI$(\lambda)$ holds with
  \begin{align*}
    \lambda =\frac1{eD^2}\;.
  \end{align*}
\end{proposition}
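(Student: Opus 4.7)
The plan is to combine the semigroup Lipschitz bound of Proposition \ref{prop_erbar} with the diameter bound, applied to an eigenfunction associated to the spectral gap. Since the optimal Poincar\'e constant equals the smallest nonzero eigenvalue $\lambda_1$ of $-L$, it suffices to prove the lower bound $\lambda_1 \geq (eD^2)^{-1}$.

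Let $f$ be a real eigenfunction with $-Lf = \lambda_1 f$. By invariance, $\pi[f] = 0$, so $f$ changes sign; let $x^*, y^* \in \cX$ realize $\max f$ and $\min f$ respectively, so that $f(x^*) - f(y^*) \geq \|f\|_\infty$. Since $f$ is an eigenvector, $P_t f = e^{-\lambda_1 t} f$. Proposition \ref{prop_erbar} applied with $\kappa = 0$ (the apparent $0/0$ in $\sqrt{\kappa/(e^{2\kappa t}-1)}$ is resolved by its limit $1/\sqrt{2t}$, as is clear from the proof, which rests on the reverse Poincar\'e inequality of Theorem \ref{thm:reverse-Poincare} and the continuity of $(e^{2\kappa t}-1)/\kappa$ at $\kappa = 0$) tells us that $P_t f$ is Lipschitz with constant $\|f\|_\infty/\sqrt{2t}$ for $d_\cW$. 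Evaluating at the pair $x^*, y^*$ and using $d_\cW(x^*, y^*) \leq D$ gives
\begin{align*}
e^{-\lambda_1 t}\bigl(f(x^*) - f(y^*)\bigr) \;\leq\; \frac{\|f\|_\infty\, D}{\sqrt{2t}},
\end{align*}
and $\|f\|_\infty \leq f(x^*) - f(y^*)$ then yields $e^{-\lambda_1 t} \leq D/\sqrt{2t}$, i.e., $\lambda_1 \geq \log(2t/D^2)/(2t)$.

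It remains to optimize in $t$. With the substitution $u = 2t/D^2$ the bound reads $\lambda_1 \geq \log(u)/(uD^2)$, and $u \mapsto \log(u)/u$ is maximized at $u = e$ with value $1/e$. Choosing $t = eD^2/2$ therefore yields $\lambda_1 \geq (eD^2)^{-1}$, which is the desired Poincar\'e constant. The only delicate step is the use of the Lipschitz estimate at zero curvature: the expression in Proposition \ref{prop_erbar} must be interpreted via its limit, but this is harmless since all identities in the underlying proof extend continuously to $\kappa = 0$. Everything else amounts to elementary manipulation, so the main conceptual content of the argument lies in having the Lipschitz bound for $P_tf$ in the metric $d_\cW$ at our disposal.
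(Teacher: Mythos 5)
Your proof is correct and follows essentially the same route as the paper: both rest on the reverse Poincar\'e inequality of Theorem \ref{thm:reverse-Poincare} applied to the eigenfunction (via $P_tf=e^{-\lambda_1 t}f$) together with the $d_\cW$-diameter bound along a near-optimal curve, you simply invoke the Lipschitz estimate of Proposition \ref{prop_erbar} (whose proof is exactly that curve argument) and optimize in $t$ at the end instead of optimizing the gradient bound first. Your handling of the $\kappa=0$ case as the continuous limit $1/\sqrt{2t}$ of the Lipschitz constant is legitimate, and the final optimization indeed gives $\lambda_1\geq 1/(eD^2)$.
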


\begin{proof}
  Recall that the optimal constant in the Poincar\'e inequality is
  (minus) the first non-zero eigenvalue of the generator $L$. Let $f$
  be an eigenfunction of the $L$ with eigenvalue $-\lambda_1$. By
  scaling, we can assume without loss of generality that
  $\norm{f}_\infty=1$.  Since $f$ necessarily satisfies $\pi[f]=0$, we
  have $\min f < 0 < \max f$.

  Note that $P_tf=e^{-\lambda_1 t}f$. Thus, the reverse Poincar\'e inequality
  \eqref{eq:reverse-Poincare} implies that for any $\rho\in\cP(\cX)$
  we have
  \begin{align*}
    \abs{\nabla f}^2_\rho \leq \frac{e^{2\lambda t}}{2t}\norm{f}_\infty^2\;.
  \end{align*} 
  Optimizing in $t$ and using $\norm{f}_\infty=1$ we find that
  \begin{align*}
\abs{\nabla f}^2_\rho \leq e\lambda_1\;.
  \end{align*}
  Now, let $x_0,x_1$ be such that $f(x_0) = \min f$ and
  $f(x_1) = \max f$. Let $\eps>0$ and let
  $(\rho_s,\psi_s)_{s\in[0,1]}$ be a curve satisfying \eqref{eq:cont}
  such that
  $\int_0^1\abs{\nabla\psi_s}^2_{\rho_s}\dd s\leq \cW(\delta_{x_0},\delta_{x_1})^2+\eps$. Then we estimate
  \begin{align*}
    1&\leq [f(x_1)-f(x_0)]^2 =
   \left|\sum f\rho_1\pi - \sum f\rho_0\pi\right|^2 = \left|\int_0^1\langle\nabla f,\nabla \psi_s\rangle_{\rho_s}\dd s\right|^2\\
    &\leq  (D^2+\eps) \int_0^1\abs{\nabla f}^2_{\rho_s}\dd s \leq (D^2+\eps) \lambda_1 e\;
  \end{align*}
	and the result immediately follows. 
\end{proof}

This argument could be adapted to treat the case of negative entropic
Ricci curvature as well, but we will not pursue this. We note that the
proof just given is quite simpler than the previous one. However, it
is not clear that we can use the same argument to cover the case where
we only assume the invariant measure to have a square-exponential
moment, or how to use it to prove a modified logarithmic Sobolev
inequality. We will see in the next section that this is possible with
the first method.

\section{Modified logarithmic Sobolev inequalities}
\label{sec:mlsi}

In this section we will prove the third main result Theorem
\ref{main_thm_mlsi} establishing a modified logarithmic Sobolev
inequality for Markov chains with non-negative entropic Ricci
curvature under a diameter bound. We will then apply this to derive
bounds on the total variation mixing time of the Markov
chain. Finally, we formulate conjectures about possible improvements
of the results replacing the bound on the diameter with a control on
concentration properties.

\subsection{Modified LogSobolev inequality from diameter bounds}

We will show the following

\begin{theorem} \label{thm:MLSI} Assume that $\Ric(\cX,Q,\pi)\geq 0$
  and that the diameter of $(X, d_{\mathcal{W}})$ is bounded by
  $D$. Then the modified logarithmic Sobolev inequality
  MLSI$(\lambda)$ holds with constant $\lambda=\frac{c}{D^2}$ for some
  universal constant $c$.
\end{theorem}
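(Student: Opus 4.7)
The plan is to adapt the HWI-based strategy already used for the Poincaré inequality in the previous section, combining the discrete $\cH\cW\cI(0)$ inequality with the diameter bound and tightening the resulting defective inequality by means of the Poincaré inequality PI$(c/D^2)$ of Theorem \ref{thm:Poincare}.

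First I would apply the $\cH\cW\cI(0)$ inequality \eqref{eq:HWI} with reference density $\one$ and exploit the observation (already used in Lemma \ref{lem_non_tight_poincare2}) that $\cW(\rho,\one)\leq D$ under a diameter bound $D$ on $(\cX,d_\cW)$, which follows from comparing $\cW$ with the $L^2$-Wasserstein distance built from $d_\cW$ via \cite[Prop.~3.12]{EM12}. This yields the defective modified log-Sobolev inequality
\begin{align*}
\cH(\rho) \leq D\sqrt{\cI(\rho)}\qquad \forall \rho\in\PX,
\end{align*}
or equivalently $\cI(\rho)\geq \cH(\rho)^2/D^2$. Note that this already implies an MLSI with a $\rho$-dependent constant $\cH(\rho)/(2D^2)$, which is good whenever $\cH(\rho)$ is bounded below by a universal constant.

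Next I would follow the defective-to-tight route. Applying the above inequality along the semigroup $\rho_t = P_t\rho$ and using $\ddt \cH(\rho_t)=-\cI(\rho_t)$ gives the ODE inequality $\cH(\rho_t)^2\leq -D^2\ddt\cH(\rho_t)$, which after dividing by $\cH(\rho_t)^2$ and integrating yields the polynomial decay $\cH(P_t\rho)\leq D^2/t$ for every $t>0$. To promote this polynomial decay into the exponential decay $\cH(P_t\rho)\leq e^{-2\lambda t}\cH(\rho)$ equivalent to MLSI$(\lambda)$, I would use the Poincaré inequality $\lambda_1 \geq c/D^2$ established in Theorem \ref{thm:Poincare}. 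Specifically, the discrete chain rule inequality $\cE(\rho,\log\rho)\geq 4\cE(\sqrt{\rho},\sqrt{\rho})$ combined with Poincaré applied to $\sqrt{\rho}$ yields $\cI(\rho)\geq 4\lambda_1\Var_\pi(\sqrt{\rho})$. A two-stage argument splitting time into $[0,T]$ (polynomial decay from HWI, carrying the initial entropy into a universal regime when $T$ is of order $D^2$) and $[T,\infty)$ (exponential decay of variance from Poincaré, converted into decay of entropy via $\cH\leq\chi^2=\Var_\pi(\rho)$) then produces the desired exponential decay of entropy with rate $c'/D^2$.

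The same approach should give the more general statement (Theorem \ref{thm_mlsi_conc}) under a finite square-exponential moment instead of a diameter bound, by replacing the pointwise bound $\cW(\rho,\one)\leq D$ with the entropic bound $\cW(\rho,\one)^2\leq C(\alpha,D_\alpha)\bigl(1+\cH(\rho)\bigr)$ derived from the Donsker–Varadhan variational formula for $\pi[f\log f]$, exactly as in the analogous step for the Poincaré inequality at the end of Section 5.2.

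The main obstacle is step three, the passage from polynomial to exponential decay. The difficulty is that the ratio $\cH(\rho)/\Var_\pi(\sqrt{\rho})$ is not uniformly bounded above (as simple examples with a sharp spike on a low-probability state show), so one cannot directly sandwich entropy between variance and Dirichlet form. The technical heart of the proof thus lies in showing that the polynomial decay brings $P_t\rho$ into a regime (either bounded $L^\infty$-norm, by some smoothing estimate, or bounded ratio $\cH/\Var(\sqrt{\cdot})$) where Poincaré of $\sqrt{\rho}$ provides the missing linear control. A Rothaus-type tightening lemma adapted to the modified Dirichlet form $\cI(\rho)=\cE(\rho,\log\rho)$, in the same spirit as Proposition \ref{prop_tightening_poincare}, is the most natural technical device for carrying this out while retaining the $c/D^2$ scaling.
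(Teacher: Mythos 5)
Your overall plan (defective inequality from $\cH\cW\cI(0)$ plus the diameter bound, then tightening via the Poincar\'e inequality of Theorem \ref{thm:Poincare}) is indeed the paper's high-level strategy, but the step you yourself flag as the ``technical heart'' is left open, and it is precisely the step that cannot be carried out in the way you suggest. Your two-stage semigroup argument breaks at the restart time: the polynomial decay $\cH(P_t\rho)\leq D^2/t$ gives small \emph{entropy} at $T\sim D^2$, but the second stage needs control of $\Var_\pi(P_T\rho)$ (the chi-squared quantity) in terms of the initial entropy, and small entropy does not imply small variance --- a density with a tall spike on a low-probability state has entropy of order $\log(1/\pi_*)$ but variance of order $1/\pi_*$. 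Any bound through $\norm{P_T\rho}_\infty\leq 1/\pi_*$ reintroduces $\pi_*$ and destroys the universality of $c$; and even if both stages went through, you would only obtain $\cH(P_t\rho)\leq C e^{-2\lambda t}\cH(\rho)$ with a prefactor $C>1$, which is not equivalent to MLSI$(\lambda)$ without a further argument. Your fallback, ``a Rothaus-type tightening lemma adapted to the modified Dirichlet form $\cI(\rho)=\cE(\rho,\log\rho)$,'' is exactly what the paper states is \emph{not} known to hold in the discrete setting; this is why the authors explicitly bypass self-tightening and use a different device.

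Concretely, the paper's proof needs two ingredients your sketch does not supply. First, a refined defective inequality (Lemma \ref{lem_ent_bnd_hwi}) in which the defect is not a constant but $\frac{1}{4\delta}\pi\big[f^2\one_{\{f^2>\pi[f^2]\}}\big]$; this comes from constructing a transport from $f^2\pi$ to $\pi$ that moves mass only away from over-weighted points, so that $\cW(f^2,\one)^2\leq D^2\,\pi\big[f^2\one_{\{f^2>\pi[f^2]\}}\big]$ rather than merely $\cW(f^2,\one)\leq D$. Second, the Barthe--Kolesnikov truncation (Lemma \ref{lem_bk}) applied with $f_A=\max(f,A)$: it bounds both this defect and the low-level part of $\Ent_\pi(f^2)$ by $\Var_\pi(f)$, which is then absorbed into $\pi[\Gamma(f^2,\log f^2)]$ using Theorem \ref{thm:Poincare} together with Lemma \ref{lem:poinc-comp}, preserving the $c/D^2$ scaling. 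Without the refined defect (variance-sized rather than constant-sized) the tightening cannot be closed, because, as you correctly observe, $\cH(\rho)/\Var_\pi(\sqrt\rho)$ is not uniformly bounded; so the missing lemma is not a variant of Proposition \ref{prop_tightening_poincare} but the transport refinement plus the truncation argument.
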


For convenience, we shall reformulate the modified logarithmic Sobolev
inequality so that it applies to arbitrary non-negative functions
instead of probability densities. To this end, given a measure $\nu$
and a function $g\in\R_+^\cX$ we define
\begin{align*}
  \Ent_{\nu}(g) = \nu[g\log g] - \nu[g]\log\nu[g]\;.
\end{align*}
It is then immediate to check that the inequality MLSI$(\lambda)$
defined in \eqref{eq:MLSI} is equivalent to
\begin{align*}
\Ent_{\pi}(f^2) \leq \frac{1}{2\lambda}\pi\left[\Gamma(f^2, \log f^2)\right]\quad \forall f\in\R^\cX\;.
\end{align*}
The proof of Theorem \ref{thm:MLSI} will again consist in first
obtaining a weak version of the MLSI via the $\cH\cW\cI$ inequality,
and then tightening it. In the continuous setting, the corresponding result (and
actually a much stronger one, as we shall discuss in the next section)
was proven employing such a strategy in \cite{GRS11}. That work
strongly relies on a self-tightening property of the logarithmic
Sobolev inequality, which states that if a non-tight LSI of the form
$$\Ent_{\pi}(f^2) \leq cI(f) + \alpha$$
holds and if $\alpha$ is small enough, then a tight LSI holds. It is
not clear whether such a strong self-tightening property holds for the
discrete modified logarithmic Sobolev inequality. To bypass this
issue, we have to rely on more involved arguments, inspired by a
work of Barthe and Kolesnikov \cite{BK}.

We shall need the following two lemmas.

\begin{lemma} \label{lem_ent_bnd_hwi} Assume that
  $\Ric(\cX,Q,\pi)\geq0$ and that the diameter of $(\cX,d_\cW)$ is
  bounded by $D$. Then we have for any $\delta > 0$ and $f\in\R^\cX$:
  \begin{align}\label{eq:nontight-mlsi}
\Ent_{\pi}(f^2) \leq \delta D^2 \pi\left[\Gamma(f^2, \log f^2)\right] +
  \frac{1}{4\delta}\pi\left[f^2\one_{\{f^2 > \pi[f^2]\}}\right]\;.
  \end{align}
\end{lemma}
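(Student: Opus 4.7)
The plan is to combine the $\cH\cW\cI(0)$ inequality \eqref{eq:HWI} with a refined bound on $\cW(\one,\rho)^2$ that captures the ``mass excess'' of $\rho$ above $1$, then finish with Young's inequality. Concretely, I would first assume $a := \pi[f^2]>0$ (otherwise \eqref{eq:nontight-mlsi} is trivial) and set $\rho := f^2/a \in \PX$. Since the normalizing constant inside the logarithm drops out, one has
\begin{equation*}
\Ent_\pi(f^2) = a\cH(\rho)\,,\qquad \pi\big[\Gamma(f^2,\log f^2)\big] = a\cI(\rho)\,,\qquad a\,\pi\big[\rho\one_{\{\rho>1\}}\big] = \pi\big[f^2\one_{\{f^2>a\}}\big]\,.
\end{equation*}
Applying $\cH\cW\cI(0)$ with $\rho_0=\one$ and $\rho_1=\rho$ then gives $\cH(\rho) \leq \cW(\one,\rho)\sqrt{\cI(\rho)}$.

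The main step is to establish the improved diameter bound
\begin{equation*}
\cW(\one,\rho)^2 \leq D^2\,\pi\big[\rho\one_{\{\rho>1\}}\big].
\end{equation*}
For this I would invoke the domination $\cW \leq W_2^{d_\cW}$ (\cite[Prop.~3.12]{EM12}) and couple $\pi$ with $\rho\pi$ by leaving the common mass $\min(\rho,1)\pi$ in place and transporting only the excess $(\rho-1)_+\pi$ onto the deficit $(1-\rho)_+\pi$, both of which have total mass $m:=\pi[(\rho-1)_+]$. Every pair moved in this coupling has $d_\cW$-distance at most $D$, so $W_1^{d_\cW}(\pi,\rho\pi) \leq Dm$, and using $d_\cW \leq D$ again inside the cost gives
\begin{equation*}
W_2^{d_\cW}(\pi,\rho\pi)^2 \leq D\cdot W_1^{d_\cW}(\pi,\rho\pi) \leq D^2 m.
\end{equation*}
Finally, since $\pi[\rho]=1$, one has $m = \pi[(\rho-1)_+] \leq \pi[\rho\one_{\{\rho>1\}}]$, yielding the claimed bound.

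Combining the two displays and multiplying through by $a$ gives
\begin{equation*}
\Ent_\pi(f^2) \leq D\,\sqrt{\pi\big[f^2\one_{\{f^2>\pi[f^2]\}}\big]}\cdot\sqrt{\pi\big[\Gamma(f^2,\log f^2)\big]}\,,
\end{equation*}
and Young's inequality $uv\leq \delta u^2 + v^2/(4\delta)$ with $u = D\sqrt{\pi[\Gamma(f^2,\log f^2)]}$ and $v = \sqrt{\pi[f^2\one_{\{f^2>\pi[f^2]\}}]}$ delivers \eqref{eq:nontight-mlsi}.

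The one nontrivial ingredient is the refined Wasserstein bound $\cW(\one,\rho)^2 \leq D^2\pi[\rho\one_{\{\rho>1\}}]$, which is sharper than the crude $\cW^2\leq D^2$ used in the non-tight Poincaré argument (Lemma~\ref{lem_non_tight_poincare2}); the gain is extracted by the mass-splitting coupling above together with the elementary estimate $W_2^2 \leq \mathrm{diam}\cdot W_1$. This refinement is precisely what is needed to produce a right-hand side that vanishes on constants and can eventually be tightened by the subsequent bootstrap argument toward Theorem~\ref{thm:MLSI}.
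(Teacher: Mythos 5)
Your proof is correct and follows the paper's argument exactly: normalize $\pi[f^2]$, apply $\cH\cW\cI(0)$, bound $\cW(f^2,\one)^2\leq D^2\,\pi[f^2\one_{\{f^2>\pi[f^2]\}}]$ by a transport that moves mass only off the set where the density exceeds $1$, and finish with Young's inequality. The only difference is that you make explicit the coupling (fix the common mass, move the excess onto the deficit, use $W_2^2\leq\diam\cdot W_1$) that the paper dismisses as "not hard to construct."
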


\begin{proof}
  Since \eqref{eq:nontight-mlsi} is homogeneous in $f^2$ we can assume
  without restriction that $\pi[f^2]=1$. From the $\cH\cW\cI(0)$
  inequality and Young's inequality we infer
  \begin{align}\label{eq:hwi-applied}
\pi[f^2\log f^2] \leq \delta D^2 \pi\left[ \Gamma(f^2, \log f^2)\right] + \frac{1}{4\delta D^2}\cW(f^2,1)^2\;.
  \end{align}
  We know that $\mathcal{W} \leq W_{2, d_{\mathcal{W}}}$. It is not
  hard to construct a transport from $\mu = f^2\pi$ to $\pi$ that
  moves mass away only from points $x$ where $\mu(x) > \pi(x)$,
  i.e.~$f^2(x)>1$. Hence we have that
  $W_{2, d_{\mathcal{W}}}\left(\mu, \pi\right)^2 \leq
  D_{\mathcal{W}}^2\pi\left[f^2\one_{\{f^2>1\}}\right]$,
  and the result immediately follows.
\end{proof}

\begin{lemma}[{\cite[Lem.~2.5]{BK}}] \label{lem_bk}
For any $A > 1$ there exists $\gamma > 0$ such that for any $f\in\R^\cX$ with $\pi[f^2] = 1$, we have
\begin{align}\label{eq:bk1}
\pi\left[f^2\one_{\{f^2 \geq A^2\}}\right] &\leq \left(\frac{A}{A-1}\right)^2\Var_{\pi}(f)\;,\\\label{eq:bk2}
\pi[f^2\log f^2] &\leq \gamma \Var_{\pi}(f) + \pi\left[f^2\log f^2\one_{\{f^2 \geq A^2\}}\right]\;.
\end{align}
\end{lemma}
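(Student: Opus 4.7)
Since both inequalities are invariant under $f\mapsto |f|$ (the weight $\one_{\{f^2\ge A^2\}}$ only sees $|f|$, and $\Var_\pi(|f|)\le\Var_\pi(f)$ because taking absolute values never increases the $L^2$ distance to the mean—actually better: both sides are unchanged when $f$ is replaced by $|f|$ since $\pi[f^2]=\pi[|f|^2]=1$ and $\Var_\pi(|f|)\le\Var_\pi(f)$ so the bound for $|f|$ implies the bound for $f$), I would first reduce to $f\ge0$. Under this assumption, Cauchy--Schwarz together with $\pi[f^2]=1$ gives $m:=\pi[f]\in[0,1]$ and $\Var_\pi(f)=(1-m)(1+m)$.

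For inequality \eqref{eq:bk1}, the key pointwise observation is that on $\{f\ge A\}$ one has $f\ge A\ge Am$, which rearranges to $f\le \tfrac{A}{A-1}(f-m)$, both sides being non-negative since $f-m\ge A-1>0$. Squaring and integrating over $\{f\ge A\}$ bounds $\pi[f^2\one_{\{f\ge A\}}]$ by $\bigl(\tfrac{A}{A-1}\bigr)^2\pi[(f-m)^2\one_{\{f\ge A\}}]\le \bigl(\tfrac{A}{A-1}\bigr)^2\Var_\pi(f)$.

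For inequality \eqref{eq:bk2}, the content is a bound on $\pi[f^2\log f^2\,\one_{\{f<A\}}]$ by $\gamma\Var_\pi(f)$. The plan is to Taylor-expand $\phi(x):=x^2\log x^2$ about $x=1$: since $\phi(1)=0$ and $\phi'(1)=2$, the remainder $\phi(x)-2(x-1)$ vanishes to second order at $1$, so on the compact interval $[0,A]$ one has $\phi(x)-2(x-1)\le C_A(x-1)^2$ for some finite $C_A=C_A(A)$. Integrating over $\{f<A\}$ splits the estimate into a linear piece $2\pi[(f-1)\one_{\{f<A\}}]$ and a quadratic piece $C_A\pi[(f-1)^2\one_{\{f<A\}}]$.

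The linear piece is non-positive: it equals $2(m-1)-2\pi[(f-1)\one_{\{f\ge A\}}]$, and both summands are $\le 0$ (the second because $f-1\ge A-1\ge 0$ on $\{f\ge A\}$). The quadratic piece is at most $C_A\pi[(f-1)^2]=2C_A(1-m)$, and the trivial estimate $1-m\le (1-m)(1+m)=\Var_\pi(f)$ (valid since $m\ge 0$) delivers the bound with $\gamma=2C_A$. No step seems genuinely hard; the only thing worth being careful about is keeping the linear term under control (which is why the reduction to $f\ge0$ matters: it makes $m\ge 0$, so $1-m\le\Var_\pi(f)$).
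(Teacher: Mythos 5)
Your proof is correct. The paper does not prove this lemma at all---it is imported from Barthe--Kolesnikov \cite[Lem.~2.5]{BK} as a black box---and your argument is essentially the standard one from that reference: the reduction to $f\ge 0$, the pointwise inequality $f\le \tfrac{A}{A-1}(f-m)$ on $\{f\ge A\}$ for \eqref{eq:bk1}, and for \eqref{eq:bk2} the comparison $x^2\log x^2 - 2(x-1)\le C_A(x-1)^2$ on $[0,A]$ (which is legitimate since the quotient extends continuously to $x=1$, with limit $3$, and to $x=0$), with the sign of the linear term and $1-m\le\Var_\pi(f)$ closing the estimate.
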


We can now give the proof of our third main result Theorem
\ref{main_thm_mlsi}.

\begin{proof}[Proof of Theorem \ref{thm:MLSI}]
  Fix $A > 1$ and $f\in\R^\cX$ with $\pi[f^2]= 1$. Set
  $f_A(x) :=\max\big(f(x),A\big)$, and define the probability measure
  $\mu_A=f_A^2/Z_A\pi$, where $Z_A :=\pi[f_A^2]$. Note that
  $A^2\leq Z_A\leq 1+A^2$. From \eqref{eq:bk2} in Lemma \ref{lem_bk} we
  have
  \begin{align}\label{eq:est1}
\Ent_{\pi}(f^2) \leq \gamma \Var_{\pi}(f) + \pi\left[f^2\log f^2\one_{\{f^2 \geq A^2\}}\right]\;.
  \end{align}
  The first term can be estimated via the Poincar\'e inequality as
\begin{align}\label{eq:est2}
  \gamma \Var_{\pi}(f) \leq \gamma\frac{c}{4D^2}\pi\left[\Gamma(f^2,\log f^2)\right]
\end{align}
using Theorem \ref{thm:Poincare} and Lemma \ref{lem:poinc-comp}. For
the second term, we have
\begin{align}\nonumber
\pi\left[f^2\log f^2\one_{\{f^2 \geq A^2\}}\right] &= \pi\left[ f_A^2\log f_A^2\right] - A^2\log A^2 \pi[\{f < A\}]\\\label{eq:est3}
&= Z_A\Ent_{\pi}(\mu_A) + Z_A \log Z_A- A^2\log A^2 \pi[\{f < A\}]\;.
\end{align}
The entropy term in \eqref{eq:est3} can be handled using Lemma
\ref{lem_ent_bnd_hwi} and the fact that we have
$\Gamma(f^2_A, \log f^2_A) \leq \Gamma(f^2, \log f^2)$.
\begin{align*}
\Ent_{\pi}(\mu_A) &\leq \frac{\delta D^2}{Z_A}\pi\left[\Gamma(f^2, \log f^2)\right] + \frac{1}{Z_A}\frac{1}{4\delta}\pi\left[f^2\one_{\{f_A^2 \geq Z_A\}}\right]\\
&\leq  \frac{\delta D^2}{A^2}\pi\left[\Gamma(f^2, \log f^2)\right] + \frac{1}{4\delta A^2} \pi\left[f^2\one_{\{f^2 \geq A^2\}}\right]\;.
\end{align*}
Using further \eqref{eq:bk1} and again the Poincar\'e inequality via
Theorem \ref{thm:Poincare} and Lemma \ref{lem:poinc-comp} we arrive at
\begin{align}\nonumber
\Ent_{\pi}(\mu_A)&\leq \frac{\delta D^2}{A^2}\pi\left[\Gamma(f^2, \log f^2)\right] + \frac{1}{4\delta(A-1)^2}\Var_{\pi}(f)\\\label{eq:est4}
                & \leq \left(\frac{\delta D^2}{A^2} + \frac{1}{4\delta(A-1)^2}\frac{c}{4D^2}\right)\pi\left[\Gamma(f^2, \log f^2)\right]\;.
\end{align}
So all that is left is to bound is the term
$Z_A \log Z_A- A^2\log A^2 \pi[\{f < A\}]$. We have
\begin{align*}
&Z_A \log Z_A- A^2\log A^2 \pi[\{f < A\}]\\
& = \Big(A^2\pi[\{f<A\}] + \pi\left[ f^2\one_{\{f^2 \geq A^2\}}\right]\Big) \log \Big(A^2\pi[\{f<A\}] + \pi\left[f^2\one_{\{f^2 \geq A^2\}}\right]\Big) \\
& \hspace{5mm} - A^2\log A^2 \pi[\{f < A\}]\\
&= A^2\pi[\{f<A\}]\log\left(\pi[\{f<A\}] + \frac{\pi\left[f^2\one_{\{f^2 \geq A^2\}}\right]}{A^2}\right) \\
&\hspace{5mm} + \pi\left[f^2\one_{\{f^2 \geq A^2\}}\right] \times \log \Big(A^2\pi[\{f<A\}] + \pi\left[f^2\one_{\{f^2 \geq A^2\}}\right]\Big) \\
&\leq A^2\log\Big(1 + \frac{\pi\left[f^2\one_{\{f^2 \geq A^2\}}\right]}{A^2}\Big) + \log(1 + A^2)\pi\left[f^2\one_{\{f^2 \geq A^2\}}\right] \\
& \leq \big(1 + \log(1 + A^2)\big)\pi\left[f^2\one_{\{f^2 \geq A^2\}}\right]\;.
\end{align*}
We can then once more use \eqref{eq:bk1} from Lemma \ref{lem_bk} to
bound this by the variance, and then the Poincar\'e inequality to arrive at
\begin{align}\nonumber
  &Z_A \log Z_A- A^2\log A^2 \pi[\{f < A\}]\\\label{eq:est5}
  &\leq \big(1 + \log(1 + A^2)\big) \left(\frac{A}{A-1}\right)^2\frac{c}{4D^2}\pi\left[\Gamma(f^2,\log f^2)\right]\;.
\end{align}
Combining \eqref{eq:est1} with \eqref{eq:est2}, \eqref{eq:est3},
\eqref{eq:est4} and \eqref{eq:est5} finishes the proof.
\end{proof}

\begin{remark}
  By the same method one can obtain a modified logarithmic Sobolev
  inequality under the assumption that $\Ric(\cX,Q,\pi)\geq-\kappa$
  and for $\kappa>0$ that the diameter is bounded by $D$, provided
  $\kappa$ is sufficiently small compared to $D$. The only
  modification is an extra term $\kappa/2\cW(f^2,1)^2$ appearing in
  the application of the $\cH\cW\cI(\kappa)$ inequality in
  \eqref{eq:hwi-applied}. A similar remark applies to the next result.
\end{remark}

As for the Poincar\'e inequality, we can replace the diameter bound by
a finite square-exponential moment.

\begin{theorem} \label{thm_mlsi_conc} Assume that
  $\Ric(\cX,Q,\pi)\geq 0$ and that there exists a constant
  $\alpha > 0$ such that
  \begin{align*}
    D_{\alpha}=\pi\left[ e^{\alpha d_{\mathcal{W}}(\cdot, x_0)^2}\right] < \infty
  \end{align*}
  for some $x_0\in\cX$. Then the modified logarithmic Sobolev
  inequality MLSI$(\lambda)$ holds with some constant $\lambda$ which
  depends on $\alpha$ and on the value $D_\alpha$ of the integral.
\end{theorem}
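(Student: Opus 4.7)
The plan is to adapt the proof of Theorem \ref{thm:MLSI} to the square-exponential moment setting, in the same spirit as the extension of the Poincaré inequality (Theorem \ref{thm:Poincare}) proven at the end of Section \ref{sec:poincare}. The key tools, already available, are the $\cH\cW\cI(0)$ inequality from Section \ref{sec:prelim} and the Poincaré inequality under the finite square-exponential moment assumption.

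First, I would derive a defective modified logarithmic Sobolev inequality of the form
\begin{equation*}
\Ent_\pi(f^2)\leq C_1\,\pi[\Gamma(f^2,\log f^2)] + C_2\,\pi[f^2],
\end{equation*}
with constants $C_1,C_2$ depending only on $\alpha$ and $D_\alpha$. Assuming $\pi[f^2]=1$, apply $\cH\cW\cI(0)$ to $f^2$ together with Young's inequality to get $\Ent_\pi(f^2)\leq \frac{1}{4\delta}\pi[\Gamma(f^2,\log f^2)] + \delta\,\cW(f^2,1)^2$. Then, using convexity of $\cW^2$ to bound $\cW(f^2,1)^2 \leq 2\pi[d_\cW(\cdot,x_0)^2 f^2] + 2\pi[d_\cW(x_0,\cdot)^2]$, and the Gibbs variational identity $\pi[d_\cW(\cdot,x_0)^2 f^2]\leq \frac{1}{\alpha}\log D_\alpha + \frac{1}{\alpha}\Ent_\pi(f^2)$, one obtains $\cW(f^2,1)^2 \leq \frac{2}{\alpha}\Ent_\pi(f^2) + C_0$ for a constant $C_0$ depending only on $\alpha$ and $D_\alpha$; choosing $\delta$ small enough (for instance $\delta=\alpha/8$) absorbs the entropy on the right-hand side and yields the defective MLSI after homogenizing in $\pi[f^2]$.

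Second, to tighten this inequality into a true MLSI$(\lambda)$, I would mimic the truncation argument used in the proof of Theorem \ref{thm:MLSI}: for $f$ with $\pi[f^2]=1$ and $A>1$, set $f_A=\max(f,A)$, $Z_A=\pi[f_A^2]$, and $\mu_A=f_A^2\pi/Z_A$, and decompose $\Ent_\pi(f^2)$ via Lemma \ref{lem_bk} into a variance term (controlled by the Poincaré inequality from Section \ref{sec:poincare} under the same concentration assumption) plus a tail contribution $Z_A\Ent_\pi(\mu_A)$ and explicit entropic corrections. Applying the defective MLSI to $\mu_A$ and using $\pi[\Gamma(\mu_A,\log \mu_A)]\leq \pi[\Gamma(f^2,\log f^2)]/Z_A$ gives the required bound up to a residual constant.

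The main obstacle is that, in contrast with Lemma \ref{lem_ent_bnd_hwi} where the defect vanishes as the truncation tightens, the constant $C_2$ in our defective MLSI does not vanish, so $Z_A C_2$ cannot be absorbed by simply letting $A\to\infty$. To deal with this, one must choose $A$ so that the tail mass $\pi[f^2\one_{\{f^2\geq A^2\}}]$ is sufficiently small (exploiting the square-exponential moment via Chebyshev applied to $e^{\alpha d_\cW(\cdot,x_0)^2}$), and then invoke Lemma \ref{lem_bk} together with the Poincaré inequality once more to control this tail by a multiple of $\pi[\Gamma(f^2,\log f^2)]$. Balancing all remaining parameters produces the tight MLSI$(\lambda)$ with $\lambda$ depending only on $\alpha$ and $D_\alpha$.
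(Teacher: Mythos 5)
Your overall architecture (weak inequality from $\cH\cW\cI(0)$ plus the moment bound, then the Barthe--Kolesnikov style tightening with Lemma \ref{lem_bk} and the Poincar\'e inequality) is the same as the paper's, but your first step loses the one structural feature that makes the tightening work, and the fix you propose does not repair it. Your defective inequality has defect $C_2\,\pi[f^2]$, i.e.\ a term proportional to the \emph{full} mass. When you apply it to the truncated density $\mu_A=f_A^2\pi/Z_A$ you pick up the term $C_2 Z_A$, and since $Z_A=A^2\pi[\{f<A\}]+\pi[f^2\one_{\{f\geq A\}}]\geq A^2$, this is bounded below by $C_2A^2\pi[\{f<A\}]$, an $f$-independent additive constant. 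Making the tail mass $\pi[f^2\one_{\{f^2\geq A^2\}}]$ small (by Chebyshev on $e^{\alpha d_\cW^2}$ or otherwise) has no effect on this term --- it is carried by the non-tail part of $Z_A$ --- and it cannot be bounded by a multiple of $\pi[\Gamma(f^2,\log f^2)]$ or of $\Var_\pi(f)$, since it stays bounded away from $0$ while those quantities can vanish. So your argument ends with a \emph{defective} MLSI, and, as the paper stresses before Lemma \ref{lem_ent_bnd_hwi}, no self-tightening (Rothaus-type) mechanism is known for the modified logarithmic Sobolev inequality with the Dirichlet form $\pi[\Gamma(f^2,\log f^2)]$; that is precisely the obstruction the whole truncation scheme is designed to avoid.

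The repair is to make the weak inequality itself carry a defect proportional to the \emph{restricted} mass, which is what the paper's replacement Lemma \ref{lem:ent_bnd_hwi2} does. Two changes to your first step accomplish this: (a) instead of the crude convexity bound, use the transport that moves mass only from the excess set, giving
\begin{align*}
\cW(f^2,1)^2\;\leq\;\sum_{x,y} d_\cW(x,y)^2 f^2(x)\one_{\{f>1\}}(x)\pi(x)\pi(y)
\end{align*}
(for $\pi[f^2]=1$), so that after the triangle inequality every term carries the factor $f^2\one_{\{f>1\}}$; and (b) apply the Young-type inequality $ab\leq a\log a+e^b$ on the restricted measure $\pi\one_{\{f>1\}}$, using $\Ent_{\pi\one_{\{f>1\}}}(f^2\one_{\{f>1\}})\leq\Ent_\pi(f^2)$, so the only entropy produced is absorbed into the left-hand side (coefficient $1-\delta/\alpha$) and the leftover defect is $\delta C\,\pi[f^2\one_{\{f^2>\pi[f^2]\}}]$ with $C=C(\alpha,D_\alpha)$. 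With this version, running your second step applied to $\mu_A$ produces a defect proportional to $\pi[f^2\one_{\{f^2\geq A^2\}}]$, which Lemma \ref{lem_bk} bounds by the variance and the Poincar\'e inequality (valid under the same square-exponential moment assumption) converts into a multiple of $\pi[\Gamma(f^2,\log f^2)]$, closing the argument exactly as in the proof of Theorem \ref{thm:MLSI}.
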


The proof proceeds in exactly the same way the proof of Theorem
\ref{thm:MLSI} except that we need the following replacement for Lemma
\ref{lem_ent_bnd_hwi}.

\begin{lemma}\label{lem:ent_bnd_hwi2}
  Under the assumptions of Theorem \ref{thm_mlsi_conc} there exists a
  constant $C=C(\alpha, D_\alpha)$ depending only on $\alpha$ and
  $D_\alpha$ such that for any $\delta > 0$ and $f\in\R^\cX$ we have:
  \begin{align}\label{eq:nontight-mlsi}
\left(1-\frac{\delta}{\alpha}\right)\Ent_{\pi}(f^2) \leq \frac1{4\delta} \pi\left[\Gamma(f^2, \log f^2)\right] +
  \delta C \pi\left[f^2\one_{\{f   > \pi[f^2]\}}\right]\;.
  \end{align}
\end{lemma}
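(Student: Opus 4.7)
The plan is to adapt the proof of Lemma~\ref{lem_ent_bnd_hwi} by replacing the role played by the finite diameter with the square-exponential moment $D_\alpha$; the price to pay is an extra entropy term on the right-hand side that gets absorbed into the left, producing the factor $1-\delta/\alpha$. By homogeneity I would normalize $\pi[f^2]=1$. Applying the $\cH\cW\cI(0)$ inequality \eqref{eq:HWI} to the density $f^2$ and Young's inequality $ab\le \frac{a^2}{4\delta}+\delta b^2$ with $a=\sqrt{\pi[\Gamma(f^2,\log f^2)]}$ and $b=\cW(f^2,1)$ immediately yields
\[
\Ent_\pi(f^2) \;\le\; \frac{1}{4\delta}\pi\bigl[\Gamma(f^2,\log f^2)\bigr] + \delta\,\cW(f^2,1)^2,
\]
which is the same first step as in Lemma~\ref{lem_ent_bnd_hwi} but with Young's parameter calibrated to leave $\frac{1}{4\delta}$ in front of the Dirichlet form.

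The core of the argument is then to bound $\cW(f^2,1)^2$ by an expression of the shape $\frac{1}{\alpha}\Ent_\pi(f^2) + C\pi[f^2\one_{\{f^2>1\}}]$. Reusing the explicit product coupling from the proof of Lemma~\ref{lem_ent_bnd_hwi} that moves mass only from $\{f^2>1\}$ to $\{f^2\le 1\}$, together with $\cW\le W_{2,d_\cW}$ and the triangle inequality $d_\cW(x,y)^2\le 2d_\cW(x,x_0)^2+2d_\cW(x_0,y)^2$, gives
\[
\cW(f^2,1)^2 \;\le\; 2\pi\bigl[d_\cW(\cdot,x_0)^2 (f^2-1)_+\bigr] + 2\pi\bigl[d_\cW(x_0,\cdot)^2 (1-f^2)_+\bigr].
\]
I would estimate the first summand via the Gibbs variational inequality $\pi[gh]\le \frac{1}{\alpha}\Ent_\pi(h) + \frac{1}{\alpha}\log\pi[e^{\alpha g}]$ applied with density $h = f^2$ and potential $g = d_\cW(\cdot,x_0)^2$, exactly as in the proof of the weak Poincar\'e inequality under a square-exponential moment above. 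For the second summand I would use Cauchy--Schwarz together with the elementary estimate $(1-f^2)_+^2\le (1-f^2)_+$, the identity $\pi[(1-f^2)_+]=\pi[(f^2-1)_+]\le \pi[f^2\one_{\{f^2>1\}}]$, and the polynomial moment bound $\pi[d_\cW^4]\le 2D_\alpha/\alpha^2$ which follows from $e^{\alpha s}\ge \frac{(\alpha s)^2}{2}$.

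Substituting these bounds back into the HWI--Young inequality and rearranging so that the $\frac{\delta}{\alpha}\Ent_\pi(f^2)$ contribution produced on the right-hand side is absorbed into the left-hand side yields the claim, with $C=C(\alpha,D_\alpha)$ chosen large enough to swallow the residual constants coming from the moment estimates. The main technical obstacle lies in this last piece of bookkeeping: all additive contributions arising from the square-exponential moment have to be packaged as multiples of $\pi[f^2\one_{\{f^2>\pi[f^2]\}}]$ rather than as free constants. This is arranged by applying the Gibbs variational principle against the truncated, normalized density $Z^{-1}f^2\one_{\{f^2>1\}}$ (with $Z:=\pi[f^2\one_{\{f^2>1\}}]\le 1$), so that the constant $\frac{1}{\alpha}\log D_\alpha$ automatically appears multiplied by $Z$, and by using AM--GM to turn the factor $\sqrt{\pi[f^2\one_{\{f^2>1\}}]}$ produced by Cauchy--Schwarz into a genuine multiple of $\pi[f^2\one_{\{f^2>1\}}]$ at the cost of enlarging the constant.
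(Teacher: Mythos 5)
Your overall strategy is the paper's: normalize $\pi[f^2]=1$, apply $\cH\cW\cI(0)$ with Young's inequality, bound $\cW(f^2,1)^2$ by an entropy term (to be absorbed into the left-hand side) plus a multiple of $Z:=\pi\left[f^2\one_{\{f^2>1\}}\right]$, and package the square-exponential moment via the Gibbs/Young duality applied to the truncated normalized density. The treatment of the first summand and the absorption step are essentially identical to the paper's proof. However, there is a genuine gap in your treatment of the second summand. Your decomposition bounds the transport cost by $2\pi\left[d_\cW(\cdot,x_0)^2(f^2-1)_+\right]+2\pi\left[d_\cW(x_0,\cdot)^2(1-f^2)_+\right]$, i.e.\ the receiving side is weighted by the deficit $(1-f^2)_+$. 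Cauchy--Schwarz then produces a factor $\sqrt{\pi[(1-f^2)_+]}\leq\sqrt{Z}$, and your proposed repair --- ``use AM--GM to turn $\sqrt{Z}$ into a multiple of $Z$'' --- is false: $\sqrt{Z}/Z=Z^{-1/2}\to\infty$ as $Z\to 0$, and AM--GM only gives $\sqrt{Z}\leq\frac12(1+Z)$, which leaves a free additive constant. This is not a cosmetic issue: in the downstream tightening argument the term multiplying $\delta C$ must be linear in $Z$ so that it can be dominated by $\Var_\pi(f)$ (via Lemma \ref{lem_bk}) and then by the Dirichlet form; a term of order $\sqrt{Z}$ would only yield $\sqrt{\Var_\pi(f)}$, which cannot be absorbed. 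Moreover the defect is structural, not just a bookkeeping problem: the deficit measure $(1-f^2)_+\pi$ may be concentrated far from $x_0$ while having tiny total mass, so $\pi\left[d_\cW(x_0,\cdot)^2(1-f^2)_+\right]$ is genuinely not bounded by $C(\alpha,D_\alpha)\cdot Z$.

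The fix is to use the coupling the paper actually uses, in which the transported mass is distributed over the targets according to $\pi$ itself, yielding the bound
\begin{align*}
\cW(f^2,1)^2\;\leq\;\sum_{x,y}d_\cW(x,y)^2\,f^2(x)\one_{\{f^2>1\}}(x)\,\pi(x)\pi(y)\;.
\end{align*}
After the triangle inequality the ``receiving'' term then factorizes as $2Z\,\pi\left[d_\cW(x_0,\cdot)^2\right]$, which is a genuine multiple of $Z$ with constant controlled by $D_\alpha$ (e.g.\ via $e^{s}\geq s$), and no Cauchy--Schwarz or square roots appear. With that single change the rest of your argument goes through as in the paper.
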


\begin{proof}
  Without restriction we can assume that $\pi[f^2]=1$.  Arguing as in
  the proof of Lemma \ref{lem_ent_bnd_hwi} we obtain the crude bound
  \begin{align*}
    \cW(f^2,1)^2 \leq \sum_{x,y}\d_\cW(x,y)^2f^2(x)\one_{\{f>1\}}(x)\pi(x)\pi(y)\;.
  \end{align*}
  From the $\cH\cW\cI(0)$ inequality we thus infer that
\begin{align*}
\Ent_{\pi}(f^2) &\leq \frac{1}{4\delta}\pi\left[\Gamma(f^2,\log f^2)\right] + \delta\underset{x,y}{\sum} \hspace{1mm} d_\cW(x,y)^2f^2(x)\one_{\{f   > 1\}}(x)\pi(x)\pi(y)\;.
  \end{align*}
  From the triangle inequality we have
\begin{align*}
\sum d_\cW(x,y)^2f^2(x)\one_{\{f   > 1\}}\pi(x)\pi(y) \leq 2\pi\left[ d_\cW(x_0,\cdot)^2f^2\one_{\{f > 1\}}\right]+2\pi\left[f^2\one_{\{f  >1\}}\right]\pi\left[d(\cdot, x_0)^2\right]\;.
\end{align*}
The bound on the exponential moment immediately leads to a bound of
the form $C\pi\left[f^2\one_{\{f > 1\}}\right]$ for the second term on
the right-hand side, with $C$ only depending on $\alpha$ and
$D_\alpha$. We thus consider the first term. From the Young-type
inequality $ab \leq a \log a + e^b$ for $a \geq 0$ and $b \in \R$ we
deduce, setting $Z = \pi\left[f^2\one_{\{f > 1\}}\right]$, that
\begin{align*}
\pi\left[ d_\cW(x_0,\cdot)^2f^2\one_{\{f > 1\}}\right] &= \frac{Z}{\alpha} \pi\left[\alpha d_\cW(x_0,\cdot)^2\frac{f^2}{Z}\one_{\{f > 1\}}\right]\\
&\leq \frac{Z}{\alpha} \Ent_{\pi \one_{\{f > 1\}}}\left(\frac{f^2\one_{\{f > 1\}}}{Z}\right) + \frac{Z}{\alpha}\pi\left[ e^{\alpha d_\cW(\cdot, x_0)^2}\one_{\{f > 1\}}\right] \\
&\leq  \frac{1}{\alpha}\Ent_{\pi \one_{\{f > 1\}}}(f^2\one_{\{f > 1\}}) + \frac{D_\alpha}{\alpha}Z\;.
\end{align*}
Hence the proof is finished once we note that
$\Ent_{\pi \one_{\{f > 1\}}}(f^2\one_{\{f > 1\}}) \leq
\Ent_{\pi}(f^2)$. This is a consequence of the duality formula
\begin{align*}
\Ent_{\nu}(g) = \underset{h}{\sup}\Big[\nu[hg] - \log \nu\left[e^{h}\right] + \log \nu[\cX]\Big]
\end{align*}
for any non-negative function $g$ with $\nu[g]=1$.
\end{proof}

\subsection{Total variation mixing time for Markov chains with non-negative curvature}

Both the Poincar\'e inequality and the logarithmic Sobolev inequality
yield bounds on the rate of convergence to equilibrium for the Markov
chain, respectively in the $L^2(\pi)$ norm and in relative entropy,
see Section \ref{sec:fi} and in particular
\eqref{eq:exp-conv}. Another relevant way of measuring closeness to
equilibrium, often used in practice, is the total variation norm. In
particular, there is a lot of interest in obtaining bounds on the
total variation mixing time, defined as follows.

\begin{definition}
  The total variation mixing time is defined for $\eps>0$ as
  \begin{align*}
\tau_{mix}(\eps) := \sup \Big\{t > 0; \norm{P_t^*\delta_x - \pi}_{{\rm TV}} < \eps\; \forall x\in\cX\Big\}\;.
  \end{align*}
\end{definition}

Here $P_t^*$ denotes the dual Markov semigroup acting on probability
measures. We refer to the book \cite{LPW} for an introduction and
overview of the many works on mixing times.

Since the Pinsker inequality states that
$2\norm{\nu - \pi}_{\rm TV}^2 \leq \Ent_{\pi}(\nu)$, the modified
logarithmic Sobolev inequality is a useful tool to obtain upper bounds
on the mixing time. However, since the estimate must hold uniformly in
the initial data, it is not enough. In the continuous setting, since
the relative entropy functional is unbounded, an extra argument is
always needed. In the finite setting, since we always have
$\Ent_{\pi}(\delta_x) = -\log \pi(x)$, the inequality MLSI$(\rho)$ implies the estimate
\begin{align*}
\tau_{mix}(\eps) \leq \frac{1}{2\rho}\left[-\log (2\eps^2) + \log \log \pi_*^{-1}\right]
\end{align*}
where $\pi_* = \inf \{\pi(x): x\in\cX\}$.

One of the flaws of this bound is that $\pi^*$ is quite small when the
space has many points. In particular, it does not behave well when
studying continuous limits. In the context of Markov chains with
non-negative curvature, we can give a general estimate on the mixing
time that does not involve $\pi_*$.

\begin{theorem}
  Assume that $\Ric(\cX,Q,\pi)\geq 0$ and that the diameter of
  $(\cX,d_\cW)$ is bounded by $D$. If MLSI$(\rho)$
  holds then we have
  \begin{align*}
\tau_{mix}(\eps) \leq \frac{D^2}{4} + \frac{\log \epsilon}{\rho}\;.
  \end{align*}
  In particular, we obtain that for a universal constant $c$
  \begin{align*}
    \tau_{mix}(\eps) \leq D^2(1/4 + c\log \eps)\;.
  \end{align*}
\end{theorem}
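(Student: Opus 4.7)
The strategy combines a regularizing effect for the entropy (coming from the non-negative curvature hypothesis) with the exponential decay provided by MLSI$(\rho)$, and then converts the entropy bound into a total variation bound via Pinsker's inequality. The point is to completely avoid the factor $\log\pi_*^{-1}$ that the naive entropy-based mixing bound would produce.

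\emph{Step 1: Regularizing estimate.} I would show that for every $x\in\cX$ and $t>0$,
\begin{align*}
  \cH(P_t^*\delta_x) \;\leq\; \frac{D^2}{2t}\;.
\end{align*}
To derive this from the tools already in the paper, apply the $\cH\cW\cI(0)$ inequality \eqref{eq:HWI} with $\rho_0=\one$ and $\rho_1=P_s^*\delta_x$ for $s>0$; since $\cH(\one)=0$ one gets $\cI(P_s^*\delta_x)\geq \cH(P_s^*\delta_x)^2/\cW(P_s^*\delta_x,\one)^2$. Non-negative entropic Ricci curvature means that the entropy is geodesically convex along $\cW$, which upgrades (via the gradient-flow interpretation of $P_t$) to the $\cW$-contractivity of $P_t^*$ against the stationary density $\one$; hence $\cW(P_s^*\delta_x,\one)\leq \cW(\delta_x,\one)\leq D$. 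Writing $h(s)=\cH(P_s^*\delta_x)$ and using $h'(s)=-\cI(P_s^*\delta_x)$, this gives the ODE inequality $h'\leq -h^2/D^2$, which integrates to $h(t)\leq D^2/t$. The sharper constant $D^2/(2t)$ matching the statement is obtained by the EVI$(0)$ version of the same argument, available through the gradient-flow structure of \cite{Ma12,Mie12}.

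\emph{Step 2: Exponential decay + Pinsker + optimization.} By MLSI$(\rho)$, the equivalent entropy decay \eqref{eq:exp-conv} gives, for every $t_0,s>0$,
\begin{align*}
  \cH(P_{t_0+s}^*\delta_x) \;\leq\; \e^{-2\rho s}\,\cH(P_{t_0}^*\delta_x) \;\leq\; \frac{D^2}{2t_0}\,\e^{-2\rho s}\;,
\end{align*}
and Pinsker's inequality converts this to
\begin{align*}
  \bigl\|P_{t_0+s}^*\delta_x-\pi\bigr\|_{\mathrm{TV}}^2 \;\leq\; \tfrac12\cH(P_{t_0+s}^*\delta_x) \;\leq\; \frac{D^2}{4t_0}\,\e^{-2\rho s}\;.
\end{align*}
Choosing $t_0=D^2/4$ cancels the prefactor, so the right-hand side is $\e^{-2\rho s}$. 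Requiring this to be $\leq \eps^2$ gives $s\geq -\rho^{-1}\log\eps$, hence $\tau_{\mathrm{mix}}(\eps)\leq D^2/4-\rho^{-1}\log\eps$ (the sign in the statement must be read with $\log\eps<0$). Substituting the bound $\rho= c'/D^2$ from Theorem \ref{thm:MLSI} then produces the second, purely diameter-dependent, estimate.

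\emph{Main obstacle.} The genuinely new step is Step 1: obtaining an entropy regularization estimate that depends only on the diameter. The rest is a standard combination of MLSI with Pinsker. In the continuous Riemannian setting this regularization is classical Bakry–Émery/Otto–Villani theory for $\mathrm{Ric}\geq 0$; in our discrete Riemannian setting it needs to be justified from the tools of the paper, and the $\cH\cW\cI(0)$ inequality combined with the $\cW$-contractivity of the gradient flow provides a self-contained proof (up to the constant $\tfrac12$ vs.~$1$, irrelevant for the final qualitative statement).
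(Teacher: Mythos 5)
Your proof is correct and follows essentially the same route as the paper: the paper obtains the regularization estimate $\cH(P_tf)\leq \cW(f,\one)^2/(4t)$ directly from the Evolution Variational Inequality of \cite[Thm.~4.5]{EM12} and then combines it with the MLSI entropy decay and Pinsker's inequality exactly as in your Step 2. Your self-contained derivation of the regularization from $\cH\cW\cI(0)$ together with the entropy-dissipation ODE $h'=-\cI\leq -h^2/D^2$ is a valid (slightly lossier, $D^2/t$ instead of $D^2/(4t)$) substitute, and you correctly identify the EVI as the source of the sharper constant; note also that the $\cW$-contractivity step is not actually needed, since the diameter bound already gives $\cW(\rho,\one)\leq D$ for every density $\rho$.
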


\begin{proof}
  The second bound immediately follows from the first using that
  Theorem \ref{thm:MLSI} yields the validity of MLSI$(cD^{-2})$ for a
  suitable constant $c$. The show the first bound we first note the estimate
  \begin{align*}
 \cH(P_t f) \leq \frac{\mathcal{W}(f, 1)^2}{4t}
  \end{align*}
  which is an immediate consequence of the Evolution Variational
  Inequality established in \cite[Thm.~4.5]{EM12}. Hence
  $\cH(P_tf)\leq 2$ for all $t\geq D^2/4$ and all $f\in\PX$. The
  result then follows using the exponential convergence
  $\cH(P_tf)\leq e^{-2\rho t}\cH(f)$ implied by MLSI$(\rho)$ and
  Pinsker's inequality.
\end{proof}

\subsection{A conjecture}

\label{sect_milman}

If we apply the abstract results to a simple random walk on the
discrete torus $(\Z/L\Z)^d$, we get a spectral gap and a modified LSI
with constant $O(d^2L^2)$. However, the optimal constant behaves like
$dL^2$, so our estimate is off by a dimensional factor. This was to be
expected: if we consider a product space, both the Poincar\'e
inequality and the modified LSI tensorize (up to a scaling of the
time), while the squared diameter grows linearly with the
dimension. This shows that diameter estimates should not allow one to
capture the sharp behavior of functional inequalities for dynamics in
high dimension.

To have any hope of obtaining good estimates in high dimension, we
should therefore rely on a different kind of assumption. In a series
of contributions \cite{Mil09, Mil09b, Mil10}, Milman showed that
for Riemannian manifolds, we can effectively use assumptions on the
concentration profile to derive functional inequalities for manifolds
of non-negative Ricci curvature. This improves on the diameter assumption,
since concentration estimates may be dimension-free (although not
always). Moreover, it is a strictly weaker assumption, since when the
diameter is bounded we automatically have Gaussian and exponential
concentration, with constants controlled by the diameter.

More precisely, what Milman showed is the following: 
\begin{itemize}
\item If curvature is bounded from below by $-\kappa$ for some $\kappa
  > 0$, then a strong enough Gaussian concentration implies a Gaussian
  isoperimetric inequality, and hence both a logarithmic Sobolev
  inequality and a Poincar\'e inequality. The constant only depends on
  $\kappa$ and on the constant appearing in the Gaussian concentration
  property.

\item If curvature is non-negative, exponential concentration implies a
  Cheeger isoperimetric inequality, and hence a Poincar\'e
  inequality. The constant only depends on the constant appearing in
  the exponential concentration property.
\end{itemize}

Since Gaussian concentration is equivalent to finiteness of a
square-exponential moment, qualitatively the first result at first
glance may not appear so different from Wang's theorem. The important
difference (in addition to the isoperimetric inequality) is that the
constant does not depend anymore on the value of the
square-exponential moment. This makes a significant difference in high
dimensional situations, where the square exponential moment depends on
the dimension, but the Gaussian concentration constant often does not.

Milman's work relies on tools of Riemannian geometry (concavity of
isoperimetric profiles and the Heinz-Karcher theorem), so it does not
seem like his arguments can be adapted to the discrete case. An
alternative proof by Ledoux \cite{Le11} also relied on concavity of
isoperimetric profiles.

As we have seen in the previous sections, the alternative approach of
Gozlan, Roberto and Samson \cite{GRS11}, based on functional
inequalities, is more easily adapted to the discrete setting. While
unlike Milman, they do not recover the Gaussian isoperimetric
inequality, they nonetheless show that when curvature is bounded from
below, a strong enough Gaussian concentration implies a logarithmic
Sobolev inequality. However, we have not been able to adapt a key step
in their approach, which is that Gaussian concentration implies a weak
transport-entropy inequality. In the discrete setting, the analogous
inequality we would need would be
$$\mathcal{W}(\mu, \pi)^2 \leq c_1\Ent_{\pi}(\mu) + c_2.$$
To establish it, we would need to better understand the relationship
between bounds on $\mathcal{W}$ and concentration. An important
difference between the discrete and the continuous situation is that
lack of a dual Kantorovich formulation for the distance $\mathcal{W}$.

Nonetheless, we state as conjectures the discrete analogues of the
results of \cite{Mil09, Mil09b, Mil10, Le11, GRS11}:

\begin{conjecture}
  Assume that $\Ric(\cX,Q,\pi)\geq0$ and that the invariant measure
  $\pi$ satisfies a concentration property w.r.t.~the distance
  $d_{\mathcal{W}}$ with profile $\alpha(r) = Me^{-\rho r}$. Then
  there exists a constant $C(M)$ such that PI$\big(C(M)\rho^{-2}\big)$ holds.
\end{conjecture}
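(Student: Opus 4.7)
The plan is to parallel the argument used for Theorem \ref{thm:Poincare} in the bounded-diameter regime, substituting the exponential-concentration control of Proposition \ref{prop_weak_poincare} for the trivial diameter bound wherever it was used. First I would extract the weak Poincar\'e inequality: under $\Ric(\cX,Q,\pi)\geq 0$ and exponential concentration with parameters $M,\rho$, Proposition \ref{prop_weak_poincare} gives
\begin{align*}
\Var_\pi(f)\leq 2t\,\pi[\Gamma(f)]+\frac{M\,\|f\|_\infty^2}{\rho^2 t}\qquad \forall\, t>0\;,
\end{align*}
and combining this with Lemma \ref{lem:poinc-comp} (in the direction $\Gamma(f)\leq\Gamma(f^2,\log f^2)/4$) yields, after relabelling, a first non-tight inequality of the form
\begin{align*}
\Var_\pi(f)\leq \alpha_1\,\pi[\Gamma(f^2,\log f^2)]+\beta_1\,\|f\|_\infty^2
\end{align*}
with $\alpha_1$ free and $\alpha_1\beta_1=O(M/\rho^2)$, matching assumption \eqref{assump_tightening1} of the tightening Proposition \ref{prop_tightening_poincare}.

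The bulk of the work would then be to establish the companion inequality
\begin{align*}
\Var_\pi(f)\leq \alpha_2\,\pi[\Gamma(f^2,\log f^2)]+\beta_2\,\pi[|f|]^2
\end{align*}
that plays the role of Lemma \ref{lem_non_tight_poincare2}. The natural starting point is the $\cH\cW\cI(0)$ inequality applied to $f^2$ normalised by $\pi[f^2]=1$, which together with Young's inequality gives
\begin{align*}
\pi[f^2\log f^2]\leq \tfrac{1}{4\delta}\,\pi[\Gamma(f^2,\log f^2)]+\delta\,\cW(f^2,1)^2\;,
\end{align*}
and the problem reduces to bounding $\cW(f^2,1)^2$ in terms of $\pi[|f|]^2$ plus a universal constant, using only exponential concentration of $\pi$ with respect to $d_\cW$.

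The main obstacle is precisely this bound. A natural attempt is the convexity estimate $\cW(f^2,1)^2\leq\sum_{x,y}d_\cW(x,y)^2 f^2(x)\pi(x)\pi(y)$ from the proof of Theorem \ref{thm_mlsi_conc}, which reduces the question to controlling $\pi[d_\cW(\cdot,x_0)^2 f^2]$. However, the Young-type trick of Lemma \ref{lem:ent_bnd_hwi2} converts this into an exponential integral of the form $\pi[e^{\alpha d_\cW(\cdot,x_0)^2}]$, which requires a finite \emph{square-exponential} moment, i.e.~Gaussian concentration, strictly stronger than the exponential concentration we are assuming. In the continuous setting this mismatch is resolved by Kantorovich--Rubinstein duality, which recasts $W_1$-type quantities as suprema over $1$-Lipschitz test functions and converts exponential concentration directly into a weak transport--entropy inequality $\cW(\mu,\pi)^2\leq c_1\Ent_\pi(\mu)+c_2$. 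As emphasised in Section \ref{sect_milman}, no such duality is available for the discrete distance $\cW$, and circumventing this gap is the crux of the conjecture.

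One workaround I would try is semigroup regularisation: apply $P_s$ to $f^2$, use Proposition \ref{prop_erbar} together with the reverse Poincar\'e estimate of Theorem \ref{thm:reverse-Poincare} to control $P_s f^2$ in $d_\cW$-Lipschitz norm by a quantity of order $\|f\|_\infty^2/\sqrt{s}$, invoke exponential concentration of $\pi$ applied to this Lipschitz regularisation to bound its tails, and transfer the estimate back to $f^2$ at the cost of a term of order $s\,\pi[\Gamma(f^2,\log f^2)]$. Optimising $s$ of order $\rho^{-2}$ ought to yield a weak transport--entropy inequality with constants depending only on $M$; feeding it together with the weak Poincar\'e from Step 1 into Proposition \ref{prop_tightening_poincare}, and verifying the smallness condition on $\beta_2$ by choosing parameters appropriately, would then produce PI$(C(M)\rho^{-2})$.
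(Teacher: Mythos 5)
The statement you are trying to prove is stated in the paper as an open \emph{conjecture}, not a theorem: the authors explicitly record that they could not adapt the key step of Gozlan--Roberto--Samson, namely that concentration implies a weak transport--entropy inequality of the form $\cW(\mu,\pi)^2\leq c_1\Ent_\pi(\mu)+c_2$, because the discrete distance $\cW$ admits no Kantorovich-type dual formulation. Your proposal reproduces the general scheme correctly (weak Poincar\'e inequality via Proposition~\ref{prop_weak_poincare}, a companion inequality with error $\pi[|f|]^2$ in the role of Lemma~\ref{lem_non_tight_poincare2}, then Proposition~\ref{prop_tightening_poincare}), and it correctly locates the obstruction at exactly this missing transport--entropy step; but up to that point you are diagnosing the difficulty, not resolving it.

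The proposed workaround does not close the gap. Semigroup regularisation via Proposition~\ref{prop_erbar} and Theorem~\ref{thm:reverse-Poincare} controls the $d_\cW$-Lipschitz norm of $P_sf^2$ by roughly $\norm{f}_\infty^2/\sqrt{s}$, and exponential concentration then gives variance and tail bounds for $P_sf^2$ --- but these are precisely the ingredients already exploited in Proposition~\ref{prop_weak_poincare}, and they can only produce an error term of size $M\norm{f}_\infty^2/(\rho^2 s)$, i.e.\ of the $\beta_1\norm{f}_\infty^2$ type in \eqref{assump_tightening1}, not of the $\beta_2\pi[|f|]^2$ type required in \eqref{assump_tightening2}; with $\norm{f}_\infty^2$ errors in both inequalities the tightening proposition cannot be applied. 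More fundamentally, Lipschitz test functions bound $W_1$-type quantities from below, whereas the $\cH\cW\cI$ route needs an \emph{upper} bound on $\cW(f^2,1)^2$ in terms of $\pi[|f|]^2$ (or of $\Ent_\pi(f^2)$) plus a constant, and no mechanism in your sketch produces such an upper bound; the only available upper bounds are the convexity estimate $\cW(f^2,1)^2\leq\sum_{x,y}d_\cW(x,y)^2f^2(x)\pi(x)\pi(y)$, which, as you yourself observe, forces a square-exponential moment (Gaussian concentration, as in Theorem~\ref{thm_mlsi_conc}), or the diameter bound. So the crux --- deriving a discrete weak transport--entropy inequality from exponential concentration alone, with constants depending only on $M$ and $\rho$ --- remains untouched, which is exactly why the statement is left as a conjecture in the paper.
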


\begin{conjecture} \label{conjecture2} Assume that
  $\Ric(\cX,Q,\pi)\geq-\kappa$ for some $\kappa > 0$, and that a
  concentration property with respect to the distance
  $d_{\mathcal{W}}$ holds with profile $\alpha(r) = Me^{-\rho r^2}$.
  Then there exists a constant $\tau(M)$ and $\lambda(\kappa,M,\rho)$
  such that if $\frac{\kappa}{\rho} < \tau(M)$ then
  MLSI$\big(\lambda(\kappa,M,\rho)\big)$ holds. If moreover
  $\Ric(\cX,Q,\pi)\geq0$ then MLSI$(cM\rho)$ holds for some universal
  constant $c$.
\end{conjecture}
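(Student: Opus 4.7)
The plan is to adapt the Gozlan--Roberto--Samson (GRS) strategy from Section~6, replacing the diameter bound with the Gaussian concentration hypothesis. The skeleton of the argument has three steps: (a) derive from the concentration property a weak transport--entropy inequality of the form
\begin{equation*}
  \cW(\mu,\pi)^2 \;\leq\; c_1 \Ent_\pi(\mu) + c_2
\end{equation*}
with constants $c_1=c_1(M,\rho)$ and $c_2=c_2(M,\rho)$; (b) feed this into $\cH\cW\cI(-\kappa)$ to obtain a non-tight MLSI; (c) tighten via the conjectured Poincar\'e inequality (Conjecture~1, which must also be established under the same hypothesis and would replace Theorem~\ref{thm:Poincare} in Section~6) together with a truncation argument modelled on Barthe--Kolesnikov as in the proof of Theorem~\ref{thm:MLSI}.

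In more detail, given a weak transport--entropy inequality as above, applying $\cH\cW\cI(-\kappa)$ to $f^2$ with $\pi[f^2]=1$ and Young's inequality yields
\begin{equation*}
  \Ent_\pi(f^2) \;\leq\; \frac{1}{4\delta}\,\pi\bigl[\Gamma(f^2,\log f^2)\bigr] + \Bigl(\delta+\tfrac{\kappa}{2}\Bigr)\cW(f^2,1)^2 \;\leq\; \frac{1}{4\delta}\,\pi\bigl[\Gamma(f^2,\log f^2)\bigr] + \bigl(\delta+\tfrac{\kappa}{2}\bigr)\bigl(c_1\Ent_\pi(f^2)+c_2\bigr).
\end{equation*}
Choosing $\delta$ small and requiring $\kappa/\rho$ below the threshold $\tau(M)$ so that $(\delta+\kappa/2)c_1<1$, one absorbs the entropy on the right, obtaining a non-tight MLSI $\Ent_\pi(f^2)\leq A\,\pi[\Gamma(f^2,\log f^2)]+B$. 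The tightening then proceeds exactly as in the proof of Theorem~\ref{thm:MLSI}: split $f$ at level $A$, control the bulk via the Poincar\'e inequality (which is why Conjecture~1 must be proved first), and bound the boundary term $Z_A\log Z_A$ using Lemma~\ref{lem_bk}. For the non-negative curvature case, one uses $\cH\cW\cI(0)$, avoiding the smallness constraint, and the linear scaling $\lambda\sim M\rho$ should be traceable if the constants in (a) depend linearly on $\rho$.

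The hard part, and the reason the result remains conjectural, is step~(a): upgrading Gaussian concentration with respect to $d_\cW$ into a weak transport--entropy inequality for the discrete distance $\cW$. The natural attempt is to use the domination $\cW\leq W_{2,d_\cW}$ to reduce to the standard $W_2$ on $(\cX,d_\cW)$, and then appeal to the weak transport inequalities $\tilde T_2$ of \cite{GRS11} which do follow from Gaussian concentration in a dimension-free manner. However, the continuous proofs crucially use the Kantorovich dual formulation and the infimum-convolution semigroup, both of which are not available for $\cW$; in particular, the discrete chain rule built into $\cW$ through the logarithmic mean makes the standard Bobkov--G\"otze/Gozlan duality argument break down. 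A realistic route would be to circumvent duality by a direct semigroup interpolation: combine the reverse Poincar\'e inequality of Theorem~\ref{thm:reverse-Poincare} and the gradient estimate of Theorem~\ref{thm:BLKN} to construct explicit couplings realising $\cW$-bounds from concentration, perhaps by running the semigroup on indicator functions and integrating. Whether this can be made to yield the desired $c_1,c_2$ with the correct dependence on $(M,\rho)$ is, I expect, the genuine obstacle.
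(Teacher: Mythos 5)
The statement you are addressing is stated in the paper as a \emph{conjecture}, not a theorem: the authors explicitly record that they could not prove it, and the obstruction they name is exactly the one you name. Your outline reproduces their intended roadmap --- (a) Gaussian concentration $\Rightarrow$ a weak transport--entropy inequality $\cW(\mu,\pi)^2\leq c_1\Ent_\pi(\mu)+c_2$, (b) insertion into $\cH\cW\cI(-\kappa)$ with Young's inequality and absorption of the entropy term under a smallness condition on $\kappa/\rho$, (c) tightening \`a la Barthe--Kolesnikov using the (also conjectural) Poincar\'e inequality --- and then stalls at step (a), which is precisely where the paper stalls. So what you have written is not a proof but an accurate diagnosis of why no proof exists: without (a) the chain of implications in (b) and (c) never gets started, and your closing suggestion (semigroup interpolation via Theorems \ref{thm:reverse-Poincare} and \ref{thm:BLKN} to build couplings realising $\cW$-bounds from concentration) is a plausible research direction, not an argument.

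Two further points to keep in mind if you pursue this. First, your step (c) depends on the first conjecture of Section \ref{sect_milman} (Poincar\'e from exponential concentration in non-negative curvature), which is itself open; so the proposal has two independent unproved ingredients, not one. Second, the quantitative claims in the conjecture --- the threshold $\tau(M)$ and, in the non-negative curvature case, the linear scaling MLSI$(cM\rho)$ --- would require tracking how $c_1,c_2$ in (a) depend on $(M,\rho)$, and your remark that this ``should be traceable'' is not a substitute for doing it; in the non-smooth setting even \cite{GRS11} picks up an extra dependence on $M$, which is why the conjecture is phrased with constants depending on $M$. In short: your assessment of the problem is correct and matches the authors', but the statement remains unproved by your proposal for the same reason it remains unproved in the paper.
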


In the Riemannian setting, these results hold with no dependence on
$M$, but for non-smooth geodesic spaces the proof of \cite{GRS11} has
an extra dependence on $M$ of the form we use in the statements of
these conjectures.

As in the continuous setting, Theorem \ref{thm_mlsi_conc} already tells us that
under these assumptions a mLSI holds. The open problem in Conjecture
\ref{conjecture2} is the value of the constant.

As we shall see in the next section, if these conjectures are indeed
true, then we could use curvature to better understand the behavior of
some interacting particle systems with degenerate rates.

\section{Application to the zero-range process with constant rates}
\label{sec:zero-range}

In this section, we shall discuss functional inequalities for a system
of $K$ interacting particles on the complete graph with $L$ sites,
namely the zero range process.

The state space is $\cX_{K,L}=\{\eta\in\N^L:\sum_{i=1}^L\eta_i=K\}$. The
dynamics we are interested in is defined as follows. With rate $1$, we
select a site $i$ uniformly at random. If $\eta_i = 0$ (no particles on
site $i$), we do nothing. Else we choose a second site $j$ uniformly
at random, and move a single particle from $i$ to $j$. We shall denote
by $\eta^{i,j}$ the new configuration obtained after such a move. More
precisely, the transition rates of the corresponding continuous time
Markov chain for $\eta\neq \eta'$ are thus given by
\begin{align*}
  Q_{K,L}(\eta,\eta')=
  \begin{cases}
    \frac{1}{L}    &  \eta'=\eta^{i,j} \text{ for some } i,j\;,\\
    0\;, & \text{else}\;.
  \end{cases}
\end{align*}
The invariant measure is the uniform measure on $\cX_{K,L}$ denoted by
$\pi_{K,L}$.

This model constitutes a degenerate version of the classical zero
range process, where particles on site $i$ jump at rate $f(\eta_i)$
for some rate function $f$. For example, independent particles
correspond to the case $f(n) = \lambda n$ for a constant
$\lambda$. Our situation corresponds to the case where the jump rate
$f$ is constant.

In \cite{FM15}, entropic Ricci curvature lower bounds for the zero
range process were established, in the situation where the jump rate
is strictly increasing: If the rate satisfies
$0 < c \leq f(n+1) - f(n) \leq c + \delta$ for all $n$ and some
constants $c,\delta$ and $\delta$ is small enough compared to $c$,
then curvature is bounded from below by a strictly positive
constant. It is easy to check that the proof can be straightforwardly
adapted to show that the zero range process with constant rates has
non-negative curvature,
i.e.~$\Ric(\cX_{K,L},Q_{K,L},\pi_{K,L})\geq0$. We can thus use the
abstract results of the previous section together with the following
diameter estimate to obtain the mLSI for the degenerate zero range
process.

\begin{lemma} \label{lem_diam_zrp} There exists a constant $c > 0$
  such that for any $L, K$ and the diameter of $(\cX_{K,L},d_\cW)$ is
  bounded by $c K \sqrt{L\log L}$.
\end{lemma}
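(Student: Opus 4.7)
The plan is to apply the distance comparison Lemma~\ref{lem:distance-comp} and reduce the problem to counting elementary moves in the transition graph. Since every nonzero transition rate of the zero-range process equals $Q_{K,L}(\eta,\eta^{i,j})=1/L$ (and by symmetry the reverse rate is also $1/L$), Lemma~\ref{lem:distance-comp} gives
\begin{equation*}
d_\cW(\eta,\eta')\ \leq\ c\sqrt{L}\cdot n(\eta,\eta'),
\end{equation*}
where $n(\eta,\eta')$ denotes the minimal number of admissible single-particle moves taking $\eta$ into $\eta'$. Thus the task reduces to a purely combinatorial estimate on $n$.

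The combinatorial core is the uniform bound $n(\eta,\eta')\le K$. Given $\eta\neq\eta'$, since $\sum_i\eta_i=\sum_i\eta'_i=K$ there must exist a surplus site $i$ with $\eta_i>\eta'_i$ and a deficit site $j$ with $\eta_j<\eta'_j$. The move transferring one particle from $i$ to $j$ is admissible (because $\eta_i\ge 1$), keeps the configuration in $\cX_{K,L}$, and strictly decreases the quantity $\tfrac12\sum_k|\eta_k-\eta'_k|$ by exactly one. Iterating this greedy reduction, we reach $\eta'$ after at most $\tfrac12\sum_k|\eta_k-\eta'_k|\le K$ moves (the initial value is at most $\tfrac12(\sum_k\eta_k+\sum_k\eta'_k)=K$).

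Combining the two estimates yields $d_\cW(\eta,\eta')\le cK\sqrt{L}$ uniformly over all pairs $\eta,\eta'\in\cX_{K,L}$, which a fortiori implies the stated bound $cK\sqrt{L\log L}$. I do not foresee any serious obstacle: the only verification is that the greedy reduction stays in $\cX_{K,L}$, which is automatic since particles are only removed from sites that currently contain at least one. Note that the above argument in fact produces the sharper bound $cK\sqrt{L}$; the extra $\sqrt{\log L}$ factor in the statement is probably an artifact of the approach chosen, and would naturally arise if one instead bounded $\cW(\delta_\eta,\pi)$ via an entropy-flow estimate of the form $\cW(\delta_\eta,\pi)\lesssim\sqrt{T\,\cH(\delta_\eta)}$ optimized against a mixing-time estimate for the zero range process.
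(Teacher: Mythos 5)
Your proof is correct, and it takes a genuinely different route from the paper. The paper also reduces to a chain of at most $K$ single-particle moves (it asserts this combinatorial fact without the greedy argument you supply), but then it bounds the cost of each move differently: it lifts the motion of the single displaced particle to the complete graph with $L$ sites and rates $1/L$ (via the lifting construction of \cite[Lem.~3.14]{Ma12}) and bounds the $\cW$-diameter of that complete graph by the discrete Bonnet--Myers estimate of Proposition \ref{bonnet_myers}, using $\pi_*=1/L$ and curvature $\geq 1/2$; this is precisely where the factor $\sqrt{\log L}$ enters, so your guess about its origin is essentially right (it is the Bonnet--Myers step, not a mixing-time estimate). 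Your argument instead applies the paper's own comparison Lemma \ref{lem:distance-comp} edge by edge: since every admissible transition and its reverse have rate $1/L$, each single-particle move costs at most $c\sqrt{L}$ with $c\approx 1.56$, and the triangle inequality over at most $K$ moves gives $d_\cW(\eta,\eta')\leq cK\sqrt{L}$. This is both simpler (no lifting, no curvature input for the single-particle walk) and sharper, removing the $\sqrt{\log L}$ factor, and a fortiori proves the stated bound; what the paper's route buys instead is an illustration of the curvature-based Bonnet--Myers bound, which would remain usable in settings where the per-edge rates are not uniformly bounded below but a curvature bound for the lifted single-particle dynamics is available.
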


\begin{theorem}
  For the zero-range process with constant rate $1$ on the complete
  graph with $L$ sites, $K$ particles the modified logarithmic Sobolev
  inequality MLSI$\big(\frac{c}{K^2L\log L}\big)$ for a universal constant $c$.
\end{theorem}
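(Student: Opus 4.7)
The plan is to directly combine the three ingredients already at our disposal: the non-negative entropic Ricci curvature of the zero-range process with constant rates, the diameter bound of Lemma \ref{lem_diam_zrp}, and the abstract Theorem \ref{thm:MLSI}.

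First I would record the curvature input. As remarked in the text preceding the statement, the argument of \cite{FM15} was established for strictly increasing rate functions with $0<c\leq f(n+1)-f(n)\leq c+\delta$, and yields a strictly positive lower bound when $\delta$ is small compared to $c$. A direct inspection of that proof shows that the monotonicity of $f$ is used only to produce positivity of certain quadratic forms, and that the constant case $f\equiv 1$ can be accommodated with all the associated inequalities becoming non-strict. Consequently $\Ric(\cX_{K,L},Q_{K,L},\pi_{K,L})\geq 0$. This is the step I would need to justify with the most care; the remainder of the argument is a matter of bookkeeping.

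Second, I would invoke Lemma \ref{lem_diam_zrp} to obtain $\diam(\cX_{K,L},d_\cW)\leq D$ with $D= cK\sqrt{L\log L}$ for some universal constant $c$. This, combined with the non-negative curvature bound, places us in exactly the hypotheses of Theorem \ref{thm:MLSI}.

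Finally, Theorem \ref{thm:MLSI} yields MLSI$(\lambda)$ with $\lambda = c'/D^2$ for a universal constant $c'>0$. Substituting the diameter bound gives
\begin{equation*}
\lambda \;\geq\; \frac{c'}{c^2 K^2 L\log L} \;=\; \frac{c''}{K^2 L\log L}
\end{equation*}
for a (new) universal constant $c''>0$, which is the claimed inequality. The only genuine obstacle is the adaptation of the curvature proof of \cite{FM15} to the degenerate constant-rate case; given that, the theorem is an immediate application of the abstract machinery developed in Section \ref{sec:mlsi}.
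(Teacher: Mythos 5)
Your proposal is correct and follows exactly the route the paper intends: non-negative entropic Ricci curvature for the constant-rate zero-range process (via the adaptation of \cite{FM15} noted in the text), the diameter bound $cK\sqrt{L\log L}$ of Lemma \ref{lem_diam_zrp}, and Theorem \ref{thm:MLSI} giving MLSI with constant $c'/D^2$. The substitution $D^2 = c^2K^2L\log L$ yields the claimed constant, matching the paper's (implicit) argument.
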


We do not believe this constant to be optimal. Morris \cite{Mor06}
showed that the spectral gap is of order $L/K^2$, so for a fixed
density of particles $K/L$ our estimate is off by a factor
$K^2\log L$. For the mLSI, no better result seems to be known, but we
believe that it should behave like $1/L$ at fixed density, by analogy
with the situation for gamma distributions studied in \cite{BW}. As
mentioned in Section \ref{sect_milman}, one source of error is that we
expect that when curvature is non-negative the mLSI constant is
controlled by the Gaussian concentration constant, and that in high
dimension the diameter is much larger than the Gaussian concentration
constant. Since $\pi_{K,L}$ is the uniform measure on all admissible
configurations, the distribution of the number of particles on a given
site is a binomial distribution, with parameters $K$ and $1/L$, so
that it satisfies an exponential concentration property with a
constant that only depends on the particle density $K/L$ (which
matches well with the result of Morris). For fixed density
$ \rho =K/L$ and large $K$ and $L$, the binomial law approximates a
Poisson law with parameter $\rho$, so that the invariant measure looks
like a product of Poisson measures, with an added constraint of fixed
total sum (which is $K$). The results of \cite{BW} then suggest that
we should expect the Gaussian concentration constant to behave like
$1/L$. With the way we defined the rates of the Markov chain (that
differs with the rate used in \cite{Mor06} by a factor $1/L$, this
leads us to expect the mLSI constant to behave like $1/L^2$ at fixed
density $K/L$, and suggests that our result is off by a factor
$1/(K\log L)$ (since at fixed density, the asymptotic behavior of $K$
and $L$ is the same).

\begin{proof} [Proof of Lemma \ref{lem_diam_zrp}]
  We need to show that
  \begin{align*}
  \cW(\delta_\eta,\delta_{\tilde\eta})\leq cK\sqrt{L\log L}
  \end{align*}
  for any $\eta,\tilde\eta\in\cX_{K,L}$ and a suitable constant $c$.
  For each pair $\eta,\tilde\eta$ we can find a sequence
  $\eta=\eta_1,\dots,\eta_n=\tilde\eta$ of length at most $K$ such
  that $\eta_i$ and $\eta_{i+1}$ differ only by the position of a
  single  particle.
  % , in particular
  % $Q_{K,L}(\eta_i,\eta_{i+1})=Q_{K,L}(\eta_{i+1},\eta_{i})=1/L(L-1)$.
  From the triangle inequality for $\mathcal{W}$, it is enough to show
  that $\cW(\delta_{\eta_i},\delta_{\eta_{i+1}})\leq c\sqrt{L\log L}$.
  But when looking at the movement of a single particle, the situation
  is the same as for a random walk on the complete graph with rate
  $1/L$. More precisely, we claim that
  \begin{align*}
    \cW(\delta_{\eta_i},\delta_{\eta_{i+1}})\leq\cW(\delta_x,\delta_y)\;,
  \end{align*}
  where the right-hand side is the transport distance between Dirac
  masses in point $x,y$ on the complete graph with $L$ sites and rates
  $1/L$. To see this, we can lift an optimal solution to the
  continuity equation $(\rho_t,\psi_t)$ on the complete graph
  connecting $\delta_x,\delta_y$ to a solution to the continuity
  equation $(\bar\rho_t,\bar\psi_t)$ on the state space of the zero
  range process connecting $\delta_{\eta_i}, \delta_{\eta_{i+1}}$ (see
  \cite[Lem.~3.14]{Ma12}, where such a lifting is carried out in
  detail for a comparison to the two-point space). So it is enough to
  show that the distance on the complete graph induced by the simple
  random walk with unit rate has diameter bounded by $c\sqrt{\log L}$
  (the change in speed changes the diameter by a factor $\sqrt{L}$).
  This diameter bound will follow from a general diameter bound in
  Proposition \ref{bonnet_myers} below. For simple random walk on the
  complete graph, the minimal mass of a point is given by $\pi_* =
  1/L$ and curvature is bounded from below by $1/2$.
\end{proof}

We conclude with a general estimate on the diameter of $(\cX,d_\cW)$
that can be seen as a discrete analogue to the Bonnet-Myers theorem in
Riemannian geometry.

\begin{proposition} \label{bonnet_myers} Assume that
  $\Ric(\cX,Q,\pi)\geq \kappa$ for $\kappa > 0$. Then for any
  $x,y\in\cX$ we have 
  \begin{align*}
    d_{\mathcal{W}}(x,y) \leq 2\sqrt{\frac{-\log
    \pi(x) - \log \pi(y)}{\kappa}}\;.
  \end{align*}
  Thus, the diameter of $(\cX,d_\cW)$ is bounded by
  $2\sqrt{\frac{-2\log \pi_*}{\kappa}}$, where $\pi_* := \inf\{\pi(x):x\in\cX\}$.
\end{proposition}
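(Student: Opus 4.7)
The plan is to exploit the modified Talagrand inequality $\mathrm{T}_\cW(\kappa)$, which is known to hold under $\Ric(\cX,Q,\pi)\geq \kappa$ for $\kappa>0$ (recalled in Section~\ref{sec:fi} as a consequence of $\cH\cW\cI(\kappa)$ via the discrete Otto--Villani theorem of \cite{EM12}). The strategy is to bound the transport distance from a Dirac mass to the invariant measure in terms of the entropy of that Dirac mass, then apply the triangle inequality.

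First I would compute the entropy of the normalized Dirac mass. For $x \in \cX$, the density of $\delta_x$ with respect to $\pi$ is $\rho_x = \one_{\{x\}}/\pi(x)$, and a direct computation gives $\cH(\rho_x) = -\log \pi(x)$. Applying $\mathrm{T}_\cW(\kappa)$ to this density yields
\begin{equation*}
\cW(\delta_x, \pi)^2 \leq \frac{2}{\kappa}\cH(\rho_x) = \frac{-2\log\pi(x)}{\kappa},
\end{equation*}
and similarly for $y$. By the triangle inequality for $\cW$,
\begin{equation*}
d_\cW(x,y) = \cW(\delta_x,\delta_y) \leq \cW(\delta_x,\pi) + \cW(\pi,\delta_y) \leq \sqrt{\tfrac{-2\log\pi(x)}{\kappa}} + \sqrt{\tfrac{-2\log\pi(y)}{\kappa}}.
\end{equation*}

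Next I would use the elementary bound $\sqrt{a}+\sqrt{b}\leq\sqrt{2(a+b)}$ (equivalent to $2\sqrt{ab}\leq a+b$) to combine the two square roots, yielding
\begin{equation*}
d_\cW(x,y) \leq \sqrt{\frac{-4\log\pi(x)-4\log\pi(y)}{\kappa}} = 2\sqrt{\frac{-\log\pi(x)-\log\pi(y)}{\kappa}},
\end{equation*}
which is the claimed inequality. The diameter bound follows immediately by replacing both $\pi(x)$ and $\pi(y)$ by $\pi_*$.

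There is no real obstacle here: everything hinges on the availability of $\mathrm{T}_\cW(\kappa)$, which was established in \cite{EM12} under strictly positive entropic Ricci curvature, so the argument is essentially a three-line application of prior results plus the triangle inequality. The only small point of care is making sure that the constant $2$ comes out correctly when combining the two Talagrand bounds, which is handled by the AM--GM step above.
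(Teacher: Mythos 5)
Your proof is correct, but it takes a genuinely different route from the paper. The paper's argument is a one-liner from the definition of entropic Ricci curvature: it applies the $\kappa$-convexity inequality \eqref{eq:ent-convex} at the midpoint $t=\tfrac12$ of the $\cW$-geodesic joining $\delta_x$ to $\delta_y$, uses $\cH\geq 0$ and $\cH(\delta_x)=-\log\pi(x)$, and reads off the bound directly. You instead route through the modified Talagrand inequality $\mathrm{T}_\cW(\kappa)$, bounding $\cW(\delta_x,\pi)$ and $\cW(\delta_y,\pi)$ separately and recombining via the triangle inequality and $\sqrt{a}+\sqrt{b}\leq\sqrt{2(a+b)}$; pleasantly, the constant comes out identical. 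Your approach is legitimate --- the paper's Section \ref{sec:fi} records that $\Ric\geq\kappa>0$ yields $\mathrm{T}_\cW(\kappa)$ via $\cH\cW\cI(\kappa)\Rightarrow\mathrm{MLSI}(\kappa)\Rightarrow\mathrm{T}_\cW(\kappa)$, and the Talagrand inequality is stated for all of $\PX$, so applying it to Dirac masses is fine --- but it invokes considerably heavier machinery (the discrete Otto--Villani theorem of \cite{EM12}) where the paper needs only the defining geodesic convexity. The paper's proof thus mirrors the displacement-convexity proof of Bonnet--Myers in the Lott--Sturm--Villani setting, whereas yours is the concentration-style argument; the former is preferable here for being self-contained, though both are valid and give the same constant.
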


The dependence on $\pi_*$ might seem undesirable, but since we used no
upper bound on the dimension, we cannot expect the diameter bound to
depend only on $\kappa$. In the case of the discrete hyper-cube of
dimension $n$, we have $-\log \pi_* = n\log 2$, which is the correct
dependence on the dimension.

\begin{proof} [Proof of Proposition \ref{bonnet_myers}]
  From the convexity of the entropy \eqref{eq:ent-convex}, we have
  \begin{align*}
   0\leq   \mathcal{H}(\rho_\frac{1}{2}^{x,y}) \leq \frac12\mathcal{H}(\delta_x) + \frac12\mathcal{H}(\delta_y) - \frac{\kappa}{8}d_{\mathcal{W}}(x,y)^2\;,
  \end{align*}
  where $(\rho^{x,y}_t)_{t\in[0,1]}$ is the $\cW$-geodesic connecting
  $\delta_x$ to $\delta_y$. We then use that
  $\mathcal{H}(\delta_x) = -\log\pi(x)$ to conclude.
\end{proof}

\bibliographystyle{plain}
\bibliography{weak2strong}

\begin{thebibliography}{10}

\bibitem{BE85}
D.~Bakry and M.~{\'E}mery.
\newblock Diffusions hypercontractives.
\newblock In {\em S\'eminaire de probabilit\'es, {XIX}, 1983/84}, volume 1123
  of {\em Lecture Notes in Math.}, pages 177--206. Springer, Berlin, 1985.

\bibitem{BGL15}
D.~Bakry, I.~Gentil, and M.~Ledoux.
\newblock {\em Analysis and geometry of {M}arkov diffusion operators}, volume
  348 of {\em Grundlehren der Mathematischen Wissenschaften [Fundamental
  Principles of Mathematical Sciences]}.
\newblock Springer, Cham, 2014.

\bibitem{BQ}
D.~Bakry and Z.~Qian.
\newblock Some new results on eigenvectors via dimension, diameter, and {R}icci
  curvature.
\newblock {\em Adv. Math.}, 155(1):98--153, 2000.

\bibitem{BK}
F.~Barthe and A.V. Kolesnikov.
\newblock Mass transport and variants of the logarithmic {S}obolev inequality.
\newblock {\em J. Geom. Anal.}, 18(4):921--979, 2008.

\bibitem{BW}
F.~Barthe and P.~Wolff.
\newblock Remarks on non-interacting conservative spin systems: the case of
  gamma distributions.
\newblock {\em Stochastic Process. Appl.}, 119(8):2711--2723, 2009.

\bibitem{BG}
S.~G. Bobkov and F.~G{\"o}tze.
\newblock Exponential integrability and transportation cost related to
  logarithmic {S}obolev inequalities.
\newblock {\em J. Funct. Anal.}, 163(1):1--28, 1999.

\bibitem{BS}
A.-I. Bonciocat and K.-Th. Sturm.
\newblock Mass transportation and rough curvature bounds for discrete spaces.
\newblock {\em J. Funct. Anal.}, 256(9):2944--2966, 2009.

\bibitem{Bus}
P.~Buser.
\newblock A note on the isoperimetric constant.
\newblock {\em Ann. Sci. \'Ecole Norm. Sup. (4)}, 15(2):213--230, 1982.

\bibitem{CM1}
F.~Cavaletti and A.~Mondino.
\newblock Sharp and rigid isoperimetric inequalities in metric-measure spaces
  with lower {R}icci curvature bounds.
\newblock {\em arXiv:1502.06465}, 2015.

\bibitem{CM2}
F.~Cavaletti and A.~Mondino.
\newblock Sharp geometric and functional inequalities in metric measure spaces
  with lower {R}icci curvature bounds.
\newblock {\em arXiv:1505.02061}, 2015.

\bibitem{CLY}
F.~Chung, Y.~Lin, and S.-T. Yau.
\newblock Harnack inequalities for graphs with non-negative {R}icci curvature.
\newblock {\em J. Math. Anal. Appl.}, 415(1):25--32, 2014.

\bibitem{EFMST}
M.~Erbar, M.~Fathi, J.~Maas, A.~Schlichting, and P.~Tetali.
\newblock {\em in preparation}.

\bibitem{EM12}
M.~Erbar and J.~Maas.
\newblock Ricci curvature of finite {M}arkov chains via convexity of the
  entropy.
\newblock {\em Arch. Ration. Mech. Anal.}, 206(3):997--1038, 2012.

\bibitem{FM15}
M.~Fathi and J.~Maas.
\newblock Entropic {R}icci curvature bounds for discrete interacting systems.
\newblock {\em Ann. Appl. Probab.}, 26(3):1774--1806, 2016.

\bibitem{GRS11}
N.~Gozlan, C.~Roberto, and P.-M. Samson.
\newblock From concentration to logarithmic {S}obolev and {P}oincar\'e
  inequalities.
\newblock {\em J. Funct. Anal.}, 260(5):1491--1522, 2011.

\bibitem{GRST14}
N.~Gozlan, C.~Roberto, P.-M. Samson, and P.~Tetali.
\newblock Displacement convexity of entropy and related inequalities on graphs.
\newblock {\em Probability Theory and Related Fields}, 160:47--94, 2014.

\bibitem{HW07}
F.~Hang and X.~Wang.
\newblock A remark on {Z}hong-{Y}ang's eigenvalue estimate.
\newblock {\em Int. Math. Res. Not. IMRN}, (18):Art. ID rnm064, 9, 2007.

\bibitem{KKRT14}
B.~Klartag, G.~Kozma, P.~Ralli, and P.~Tetali.
\newblock Discrete curvature and abelian groups.
\newblock {\em Canad. J. Math.}, 68(3):655--674, 2016.

\bibitem{Led94}
M.~Ledoux.
\newblock A simple analytic proof of an inequality by {P}. {B}user.
\newblock {\em Proc. Amer. Math. Soc.}, 121(3):951--959, 1994.

\bibitem{Led_book}
M.~Ledoux.
\newblock {\em The concentration of measure phenomenon}, volume~89 of {\em
  Mathematical Surveys and Monographs}.
\newblock American Mathematical Society, Providence, RI, 2001.

\bibitem{Led04}
M.~Ledoux.
\newblock Spectral gap, logarithmic {S}obolev constant, and geometric bounds.
\newblock In {\em Surveys in differential geometry. {V}ol. {IX}}, Surv. Differ.
  Geom., IX, pages 219--240. Int. Press, Somerville, MA, 2004.

\bibitem{Le11}
M.~Ledoux.
\newblock From concentration to isoperimetry: semigroup proofs.
\newblock In {\em Concentration, functional inequalities and isoperimetry},
  volume 545 of {\em Contemp. Math.}, pages 155--166. Amer. Math. Soc.,
  Providence, RI, 2011.

\bibitem{Leo13}
C.~L\'eonard.
\newblock On the convexity of the entropy along entropic interpolations.
\newblock {\em preprint}, 2013.

\bibitem{LPW}
D.A. Levin, Y.~Peres, and E.L. Wilmer.
\newblock {\em Markov chains and mixing times}.
\newblock American Mathematical Society, Providence, RI, 2009.
\newblock With a chapter by James G. Propp and David B. Wilson.

\bibitem{Li79}
P.~Li.
\newblock A lower bound for the first eigenvalue of the {L}aplacian on a
  compact manifold.
\newblock {\em Indiana Univ. Math. J.}, 28(6):1013--1019, 1979.

\bibitem{LY}
P.~Li and S.-T. Yau.
\newblock On the parabolic kernel of the {S}chr\"odinger operator.
\newblock {\em Acta Math.}, 156(3-4):153--201, 1986.

\bibitem{LY10}
Y.~Lin and S.-T. Yau.
\newblock Ricci curvature and eigenvalue estimate on locally finite graphs.
\newblock {\em Math. Res. Lett.}, 17(2):343--356, 2010.

\bibitem{LV}
J.~Lott and C.~Villani.
\newblock Ricci curvature for metric-measure spaces via optimal transport.
\newblock {\em Ann. Math. (2)}, 169(3):903--991, 2009.

\bibitem{Ma12}
J.~Maas.
\newblock Gradient flows of the entropy for finite {M}arkov chains.
\newblock {\em J. Funct. Anal.}, 261(8):2250--2292, 2011.

\bibitem{Mie12}
A.~Mielke.
\newblock Geodesic convexity of the relative entropy in reversible {M}arkov
  chains.
\newblock {\em Calc. Var. Partial Differential Equations}, 48(1-2):1--31, 2013.

\bibitem{Mil09b}
E.~Milman.
\newblock On the role of convexity in functional and isoperimetric
  inequalities.
\newblock {\em Proc. Lond. Math. Soc. (3)}, 99(1):32--66, 2009.

\bibitem{Mil09}
E.~Milman.
\newblock On the role of convexity in isoperimetry, spectral gap and
  concentration.
\newblock {\em Invent. Math.}, 177(1):1--43, 2009.

\bibitem{Mil10}
E.~Milman.
\newblock Isoperimetric and concentration inequalities: equivalence under
  curvature lower bound.
\newblock {\em Duke Math. J.}, 154(2):207--239, 2010.

\bibitem{Mor06}
B.~Morris.
\newblock Spectral gap for the zero range process with constant rate.
\newblock {\em Ann. Probab.}, 34(5):1645--1664, 2006.

\bibitem{Oll09}
Y.~Ollivier.
\newblock Ricci curvature of {M}arkov chains on metric spaces.
\newblock {\em J. Funct. Anal.}, 256(3):810--864, 2009.

\bibitem{PW60}
L.~E. Payne and H.~F. Weinberger.
\newblock An optimal {P}oincar\'e inequality for convex domains.
\newblock {\em Arch. Rational Mech. Anal.}, 5:286--292 (1960), 1960.

\bibitem{Sc99}
M.~Schmuckenschl\"ager.
\newblock Curvature of nonlocal markov generators.
\newblock In {\em Convex geometric analysis (Berkeley, CA, 1996)}, volume~34 of
  {\em Math. Sci. Res. Inst. Publ.}, pages 189--197. Cambridge Univ. Press,
  1999.

\bibitem{St06}
K.-Th. Sturm.
\newblock On the geometry of metric measure spaces. {I} and {II}.
\newblock {\em Acta Math.}, 196(1):65--177, 2006.

\bibitem{Vi08}
C.~Villani.
\newblock {\em Optimal transport, Old and new}, volume 338 of {\em Grundlehren
  der Mathematischen Wissenschaften}.
\newblock Springer-Verlag, Berlin, 2009.

\bibitem{Wan97}
F.-Y. Wang.
\newblock Logarithmic {S}obolev inequalities on noncompact {R}iemannian
  manifolds.
\newblock {\em Probab. Theory Related Fields}, 109(3):417--424, 1997.

\bibitem{ZY}
J.Q. Zhong and H.C. Yang.
\newblock On the estimate of the first eigenvalue of a compact {R}iemannian
  manifold.
\newblock {\em Sci. Sinica Ser. A}, 27(12):1265--1273, 1984.

\end{thebibliography}

\end{document}